\numberwithin{equation}{section}
\let\f=\frac
\let\Om=\Omega
\let\th=T
\let\pa=\partial
\def\dive{\mathop{\rm div}\nolimits}
\newcommand{\beq}{\begin{equation}}
\newcommand{\eeq}{\end{equation}}
\newcommand{\ben}{\begin{eqnarray}}
\newcommand{\een}{\end{eqnarray}}
\newcommand{\beno}{\begin{eqnarray*}}
\newcommand{\eeno}{\end{eqnarray*}}
\newtheorem{theorem}{Theorem}[section]
\newtheorem{definition}[theorem]{Definition}
\newtheorem{lemma}[theorem]{Lemma}
\newtheorem{proposition}[theorem]{Proposition}
\newtheorem{remark}[theorem]{Remark}
\newtheorem{Theorem}{Theorem}[section]
\newtheorem{Lemma}[Theorem]{Lemma}
\begin{document}
\title[viscous surface waves]{Stability of Navier-Stokes equations with a free surface}

\author{Xing Cheng}
\address{College of Science, Hohai University, Nanjing 210098, Jiangsu, China.}
\email{chengx@hhu.edu.cn}
\author{Yunrui Zheng}
\address{School of Mathematics, Shandong University, Shandong 250100, Jinan, P. R. China}
\email{yunrui\_zheng@sdu.edu.cn}

\subjclass[2020]{35A01,35Q30,35R35,76D05.}

\keywords{Free boundary problems, Navier-Stokes equations, global existence, stability}

\begin{abstract}
We consider the viscous incompressible fluids in a three-dimensional horizontally periodic do-
main bounded below by a fixed smooth boundary and above by a free moving surface. The fluid dynamics
are governed by the Navier-Stokes equations with the effect of gravity and surface tension on the free sur-
face. We develop a global well-posedness theory by a nonlinear energy method in low regular Sobolev spaces.
We use several techniques, including: the horizontal energy-dissipation estimates, a new tripled bootstrap
argument inspired by Guo and Tice [Arch. Ration. Mech. Anal.(2018)]. Moreover, the solution decays
asymptotically to the equilibrium in an exponential rate.
\end{abstract}

\maketitle

\section{Introduction}
\subsection{Formulation of the problem}
The viscous surface waves problem is presented for a viscous fluid of finite depth below the air. Concretely we consider 
an incompressible viscous fluid in a moving domain
\[
\Om(t)= \left\{y\in\Sigma\times\mathbb{R} | -1<y_3<\eta \left(y_1,y_2,t \right) \right\}.
\]
The upper boundary of $\Om(t)$ is free and assumed to be a graph of the unknown function $\eta: \Sigma\times\mathbb{R}^+\to \mathbb{R}$.
Here we assume $\Sigma= \mathbb{T}_{L_1} \times \mathbb{T}_{L_2}$,
 where $\mathbb{T}_{L_1}$ and $\mathbb{T}_{L_2}$ are the tori with periodicity lengths being $L_1$ and $L_2$.

For each $t\ge0$, the fluid is described by its velocity and pressure functions $(u,p): \Om(t)\mapsto\mathbb{R}^3\times\mathbb{R}$ which satisfy the following Navier-Stokes equations with boundary conditions:
\beq\label{eq:NS}
\left\{
\begin{aligned}
  &\pa_tu+u\cdot\nabla u+\nabla p-\mu\Delta u=0\quad&\text{in}&\ \Om(t),\\
  &\dive u=0\quad&\text{in}&\ \Om(t),\\
  &S(p,u)\nu  =g\eta\nu-\kappa\mathcal{H}\nu\quad&\text{on}&\ \{y_3=\eta(y_1,y_2,t)\},\\
  &\pa_t\eta=u\cdot\mathcal{N}\quad&\text{on}&\ \{y_3=\eta(y_1,y_2,t)\},\\
  &u=0\quad&\text{on}&\ \{y_3=-1\}.
\end{aligned}
\right.
\eeq
In the system \eqref{eq:NS}, $S(p,u)$ is the stress tensor which is defined by $S(p,u)= pI-\mu\mathbb{D}u$ for $I$ the $3\times3$ identity matrix, $\left(\mathbb{D}u \right)_{ij}=\pa_iu_j+\pa_ju_i$ the symmetric gradient of $u$, and $\mu>0$ the viscosity. Clearly, for the incompressible fluid, $\dive S(p,u)=\nabla p-\mu\Delta u$. In the system \eqref{eq:NS}, $\mathcal{N}= \left(-\pa_1\eta,-\pa_2\eta,1 \right)^\top$ is the outward-pointing normal on $\Sigma(t)$, and $\nu=\mathcal{N}/|\mathcal{N}|$ is the outward-unit-normal on $\Sigma(t)$. $g>0$ is the strength of gravity, $\kappa>0$ is the coefficient of surface tension, $\mathcal{H}=\nabla_\ast\cdot\left(\f{\nabla_\ast\eta}{\sqrt{1+ \left|\nabla_\ast\eta \right|^2}}\right)$ is the twice mean curvature of the free surface, where $\nabla_\ast= \left(\pa_1, \pa_2 \right)$ is the horizontal gradient operator.
We refer to \cite{WL} for the analysis of boundary conditions in \eqref{eq:NS}. Note that in \eqref{eq:NS}, we have shifted the gravitational forcing to the boundary and eliminated the constant atmospheric pressure $p_{atm}$ by $p= \bar{p}+gy_3-p_{atm}$ for the actual pressure $\bar{p}$.

The problem \eqref{eq:NS} is studied when initial data $ \left(u_0,\eta_0 \right)$ satisfy some certain compatibility conditions. We always assume that $\eta_0+1>0$, that avoids the contact lines generated by the intersection of free surface and bottom. Without loss of generality, we may assume that $\mu=g=\kappa=1$ by scaling.

The system \eqref{eq:NS} possesses a natural physical energy. For sufficiently regular $(u,\eta)$, \eqref{eq:NS} possesses a relation between the change of physical energy and dissipation:
\beq\label{eq:energy-dissipation}
\begin{aligned}
 & \f12\int_{\Om(t)}|u(t)|^2+\f12\int_{\Sigma} \left|\eta(t)\right|^2+2 \left(\sqrt{1+ \left|\nabla_\ast\eta \right|^2}-1 \right)+\f12\int_0^t\int_{\Om(s)} \left|\mathbb{D}u(s) \right|^2\\
= &  \f12\int_{\Om(0)} \left|u_0 \right|^2+\f12\int_{\Sigma} |\eta_0|^2+2 \left(\sqrt{1+ \left|\nabla_\ast\eta_0 \right|^2}-1 \right).
\end{aligned}
\eeq
The first two integrals on the left-hand side represent the kinetic and potential energies, while the third one on the left-hand side represents the dissipation. This energy-dissipation relation \eqref{eq:energy-dissipation} is our basis of the nonlinear energy method.

\subsection{Geometric reformulation}
The Navier-Stokes equations \eqref{eq:NS} admit a steady state $u_s=0, \eta_s=0$, that enables us to switch the problem \eqref{eq:NS} into a fixed domain in order to remove the difficulties created by the free surface $\Sigma(t)$ and deformable domain $\Om(t)$. We reformulate \eqref{eq:NS} as in \cite{GT3}. Now we consider the fixed equilibrium domain
\[
\Om:= \left\{x\in\Sigma\times\mathbb{R}| -1<x_3<0 \right\}.
\]
We will think of $\Sigma$ as the upper boundary of $\Om$ without confusion, and $\Sigma_b:=\{x_3=-1\}$ as the lower boundary. Then we define the harmonic extension of $\eta$:
\beq
\bar{\eta}=
\sum_{n\in L_1^{-1}\mathbb{Z}\times L_2^{-1}\mathbb{Z}} e^{2\pi in\cdot x^\prime}e^{ 2\pi|n|x_3 }\hat{\eta}(n), \quad x^\prime\in
\mathbb{T}_{L_1} \times \mathbb{T}_{L_2}, 
\eeq
where $\hat{\eta}(n)$ denotes the Fourier transform of $\eta(x^\prime)$ in discrete form.

Now we define the mapping $\Phi:\Om\to\Om(t)$ to be
\beq\label{def:map}
\Om\ni\left(x_1,x_2,x_3 \right)\mapsto\left(x_1,x_2,x_3+\bar{\eta}\left(1+ x_3 \right)\right) =  \left(y_1,y_2,y_3 \right)\in\Om(t).
\eeq
Clearly, $\Phi\left(\Sigma,t \right)= \left\{y_3= \eta\left(y_1,y_2,t \right) \right\}$ and $\Phi \left(\Sigma_b,t \right)= \left\{y_3=-1 \right\}$; that is, $\Phi$ maps $\Sigma$ and $\Sigma_b$ to the free surface and lower boundary of $\Om(t)$ respectively.
The Jacobi matrix $\nabla\Phi$ and transform matrix $\mathcal{A}$ are
\beq
\nabla\Phi=\left(
\begin{array}{ccc}
  1&0&0\\
  0&1&0\\
  A&B&J
\end{array}\right),\quad \mathcal{A}= \left(\nabla\Phi \right)^{-\top}=\left(
\begin{array}{ccc}
  1&0&-A K\\
  0&1&-BK\\
  0&0&K
\end{array}
\right),
\eeq
where
\beq\label{components}
A=\pa_1\bar{\eta} W,\ B=\pa_2\bar{\eta} W,\ J=1+\bar{\eta}+\pa_3\bar{\eta} W,\ K=\f1{J},\ W=1+x_3.
\eeq
For any scalar-valued or vector valued function $f$ and vector $X$, we define the transformed operators as follows:
\beq
\begin{aligned}
\left(\nabla_{\mathcal{A}}f \right)_i=\mathcal{A}_{ij} \pa_jf,\
\nabla_{\mathcal{A}}\cdot X=\mathcal{A}_{ij}\pa_jX_i,\ \Delta_{\mathcal{A}}f =\nabla_{\mathcal{A}}\cdot \nabla_{\mathcal{A}}f,\\
S_{\mathcal{A}}(p,u)= pI -\mathbb{D}_{\mathcal{A}}u,\ \left(\mathbb{D}_{\mathcal{A}}u \right)_{ij}=\mathcal{A}_{ik}\pa_ku_j+\mathcal{A}_{jk}\pa_ku_i,
\end{aligned}
\eeq
where $I$ is the $3\times3$ identity matrix and the summation is understood in the Einstein convection.

If $\eta$ is sufficiently small (in an appropriate sense), the mapping $\Phi$ is a diffeomorphism. In the new coordinates, for each $t\ge0$, the original system \eqref{eq:NS} becomes
\beq\label{eq:new_ns}
\left\{
\begin{aligned}
&\pa_tu-\pa_t\bar{\eta}WK\pa_3u+  u\cdot\nabla_{\mathcal{A}}u+\nabla_{\mathcal{A}}p-\Delta_{\mathcal{A}}u=0 
\ &\text{in}&\ \Om,\\
  &\dive_{\mathcal{A}}u=0\ &\text{in}&\ \Om,\\
  &S_{\mathcal{A}}(p,u)\mathcal{N}=\eta\mathcal{N}-\mathcal{H}\mathcal{N}\ &\text{on}&\ \Sigma,\\
  &u=0\ &\text{on}&\ \Sigma_b,\\
&\pa_t\eta=u\cdot\mathcal{N}\ &\text{on}&\ \Sigma,\\
&u(x,0)=u_0(x),\ \eta \left(x^\prime,0 \right)=\eta_0 \left(x^\prime \right).
\end{aligned}
\right.
\eeq
Here we still write $\mathcal{N}= \left(-\pa_1\eta,-\pa_2\eta,1 \right)$ for the nonunit-normal to $ \left\{y_3=\eta \left(y_1,y_2,t \right) \right\}$.
If we extend $\dive_{\mathcal{A}}$ to act on symmetric tensors in the natural way, 
then $\dive_{\mathcal{A}}S_{\mathcal{A}}(p,u)=\nabla_{\mathcal{A}}p-\Delta_{\mathcal{A}}u$ for $u$ satisfying $\dive_{\mathcal{A}}u=0$.
Clearly, all the spatial differential operators are related to the geometric structure of the free surface $\eta$.

\subsection{Main results}
To state our result, we need to explain the notation for spaces and norms. When we write $ \left\|\pa_t^ju \right\|_{H^k}$, and $ \left\|\pa_t^jp \right\|_{H^k}$, we always mean that the space is $H^k(\Om)$, and when we write $ \left\|\pa_t^j\eta \right\|_{H^s}$, we always mean that the space is $H^s(\Sigma)$, where $H^k(\Om)$ and $H^s(\Sigma)$ are usual Sobolev spaces for $k, s\ge0$.

When the coefficient of viscosity $\mu$ is zero, the system \eqref{eq:NS} is reduced to the free boundary problems of Euler equations with vorticity. There are
many fruitful results for well-posedness and stability of \eqref{eq:NS} when $\mu = 0$, such as \cite{BSWZ16, SZ11, SZ08, WZZZ21, CL00} and references therein. Many of these results are related to dispersive estimates.

The viscous surface waves have been studied intensively over a long period of time, since the significant progress made by Beale \cite{Beale1}.
 We first refer to the case of absence of surface tension.
When $\Sigma= \mathbb{R}^2$ or $\mathbb{T}^2$, Beale \cite{Beale1} proved the local well-posedness in Sobolev-type spaces for any fixed bottom (not even flat), and Sylvester \cite{S90} proved the global well-posedness in the same framework of Beale.
Recently, Gui \cite{G21} proved the global well-posedness using the Lagrangian structure.
When the bottom is flat as in our setting, Hataya \cite{Ha09} proved the global well-posednss for $\mathbb{T}^2$, while
Hataya and Kawashima \cite{HK09} proved global well-posedss for $\mathbb{R}^2$.
Both \cite{Ha09} and \cite{HK09} assumed that $\left(u_0,\eta_0 \right)\in H^r(\Om)\times H^{r+1/2}(\Sigma)$ for $4< r<9/2$ were small and obtained the solutions with polynomial decay in time. The new breakthrough is due to Guo and Tice \cite{GT1,GT2,GT3}.
They developed a new two-tier energy method to prove the global well-posedness under the assumption that the initial data $ \left(u_0, \eta_0 \right)\in H^{4N}(\Om)\times H^{4N+1/2}(\Sigma)$ with $N\ge3$.
They proved the solution decays to the equilibrium at a polynomial rate when $\Sigma=\mathbb{R}^2$ and at an almost exponential rate when $\Sigma=\mathbb{T}^2$.  Recently, Ren, Xiang and Zhang \cite{RXZ19} used a moving domain iteration introduced in \cite{WZZZ21}
and obtained the local well-posedness without surface tension for large data $ \left(u_0,\eta_0 \right)\in H^2(\Om)\times H^{5/2}(\Sigma)$.

For the viscous surface waves with surface tension, Beale \cite{Beale2} proved the global well-posedness in Sobolev-type spaces using the Fourier-Laplace method with the assumption that $ \left(u_0,\eta_0 \right)\in H^r(\Om)\times H^{r+1/2}(\Sigma)$ with $5/2<r<3$ when $\Sigma=\mathbb{R}^2$.
Then Beale and Nishida \cite{BN85} showed the solution in \cite{Beale2} decays to the equilibrium in an optimal polynomial rate.
 Bae \cite{B01} proved the global well-posedness by an energy method with the same assumption as in \cite{Beale2} when $\Sigma=\mathbb{R}^2$.
Nishida, Teramoto, and Yoshihara \cite{NTY04} proved the global well-posedness with the assumption that $(u_0,\eta_0)\in H^r(\Om)\times H^{r+1/2}(\Sigma)$ with $2<r<5/2$ when $\Sigma=\mathbb{T}^2$ and proved that the solutions decay to the equilibrium in the exponential rate.
Tan and Wang \cite{TW14} considered the zero surface tension limit under the regularity assumption that the initial data $ \left(u_0, \eta_0 \right)\in H^{4N}(\Om)\times H^{4N+1/2} (\Sigma)$ with $N\ge3$.

We now consider the case $\Sigma=\mathbb{T}^2$ with surface tension. To present our results, we first define the energy
\begin{align}
\label{def:energy}
\mathcal{E}: & =\|u\|_{H^2(\Om)}^2+ \left\|\pa_tu \right\|_{H^0(\Om)}^2+\|p\|_{H^1(\Om)}^2
 + \|\eta\|_{H^3(\Sigma)}^2+ \left\|\pa_t\eta \right\|_{H^{3/2}(\Sigma)}^2,
\intertext{ and  dissipation}
\mathcal{D}: & =
\|u\|_{H^3(\Om)}^2+ \left\|\pa_tu \right\|_{H^1(\Om)}^2
+\|p\|_{H^2(\Om)}^2+\|\eta\|_{H^{7/2}(\Sigma)}^2+ \left\|\pa_t\eta \right\|_{H^{5/2}(\Sigma)}^2+ \left\|\pa_t^2\eta \right\|_{H^{1/2}(\Sigma)}^2.\label{def:dissipation}
\end{align}
Then we denote $\mathcal{A}_0=\mathcal{A}(t=0)$, $\mathcal{N}_0=\mathcal{N}(t=0)$ and $\Pi_0(v)$ to be the projection of vector $v$ on the surface $ \left\{x_3=\eta_0 \right\}$. Our main result is stated as
\begin{theorem}\label{thm:main}
Let $\eta_0+1>\delta_0>0$ for some $\delta_0>0$ and $ \left(u_0,\eta_0 \right)\in H^2(\Om)\times H^{3}(\Sigma)$ satisfy the compatibility condition
  \beq\label{cond:compatibility}
\left\{
\begin{aligned}
   &\dive_{\mathcal{A}_0}(u_0)=  0\quad&\text{in}&\ \Om,\\
   &\Pi_0\left(\mathbb{D}_{\mathcal{A}_0}u_0\mathcal{N}_0 \right)=0\quad&\text{on}&\ \Sigma, \\
   &u_0=0\quad   &\text{on}&\ \Sigma_b,
\end{aligned}
\right.
  \eeq
and the zero average condition $\left<\eta_0\right>=\f1{L_1L_2}\int_\Sigma\eta_0=0$. There exists a small $\delta>0$ such that if $\mathcal{E}(0)\le \delta$,
then \eqref{eq:new_ns} admits a unique global in time solution $(u,p,\eta)$ achieving the initial data.
Moreover, the solution satisfies 
  \[
  \sup_{t\ge 0}e^{\sigma t} \mathcal{E}(t)+\int_0^\infty \mathcal{D}(t)\,\mathrm{d}t\le C \mathcal{E}(0)
  \]
  for some universal constants $\sigma = \sigma(L_1, L_2)>0$ and $C>0$.
\end{theorem}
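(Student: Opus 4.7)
The plan is to construct the global solution by a standard continuation argument from two ingredients: local well-posedness of \eqref{eq:new_ns} in the energy class $\mathcal{E}$, and an a priori estimate of the form
\[
\frac{d}{dt}\mathcal{E}(t) + \mathcal{D}(t) \le C\sqrt{\mathcal{E}(t)}\,\mathcal{D}(t),
\]
valid whenever $\mathcal{E}$ is sufficiently small. Combined with a coercivity inequality $\mathcal{E}\lesssim \mathcal{D}$ discussed below, this forces the smallness of $\mathcal{E}$ to propagate for all time and produces the exponential decay. Local well-posedness for $(u_0,\eta_0)\in H^2(\Om)\times H^3(\Sigma)$ under the compatibility conditions \eqref{cond:compatibility} is obtained by a contraction-mapping scheme for the linearized geometric Stokes system; the compatibility conditions are precisely what is needed so that $\pa_t u(0)$ and $\pa_t\eta(0)$ can be recovered from $(u_0,\eta_0)$ at the level of regularity demanded by $\mathcal{E}(0)$, via a Stokes-type problem at $t=0$.

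To prove the a priori estimate I would couple three families of estimates. First, testing the momentum equation against $u$ with respect to the geometric measure $J\,dx$ and integrating by parts using the boundary conditions in \eqref{eq:new_ns} reproduces the geometric form of the energy identity \eqref{eq:energy-dissipation} in the fixed domain $\Om$, together with cubic remainders of size $\sqrt{\mathcal{E}}\,\mathcal{D}$ originating from the factors $\mathcal{A}-I$, $J-1$, and $\pa_t\bar\eta$. Second, differentiating the system once in $t$ and testing with $\pa_t u$ (after a suitable renormalization of the pressure) produces the evolution of $\|\pa_t u\|_{L^2(\Om)}^2+\|\pa_t\eta\|_{H^{3/2}(\Sigma)}^2$ with dissipative counterparts $\|\pa_t u\|_{H^1(\Om)}^2$ and $\|\pa_t^2\eta\|_{H^{1/2}(\Sigma)}^2$; Korn's inequality on $\Om$ with the no-slip condition on $\Sigma_b$ turns the symmetric gradient term into a full $H^1$ norm. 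Third, viewing the instantaneous system at each $t$ as a stationary capillary Stokes problem with inhomogeneous data determined by $\eta$ and $\pa_t u$, the elliptic regularity theory for the Stokes-capillary system promotes control of $\pa_t u$ in $H^1(\Om)$ and $\eta$ in $H^3(\Sigma)$ to the spatial bounds $\|u\|_{H^3}+\|p\|_{H^2}+\|\eta\|_{H^{7/2}}$ appearing in $\mathcal{D}$. All nonlinear remainders involving $\mathcal{A}-I$, $J-1$, $\mathcal{N}$ and $\mathcal{H}$ are closed by Sobolev product estimates, with every factor carrying the extra derivative absorbed into $\mathcal{D}$ and the remaining factors controlled by $\sqrt{\mathcal{E}}$.

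The coercivity $\mathcal{E}\lesssim \mathcal{D}$ is what enables the exponential rate $\sigma$. Because we assume $\left<\eta_0\right>=0$ and the kinematic condition $\pa_t\eta=u\cdot\mathcal{N}$ combined with $\dive_{\mathcal{A}}u=0$ and $u|_{\Sigma_b}=0$ preserves the zero-average property of $\eta$, Poincar\'e's inequality on the torus $\Sigma$ is available, so $\|\eta\|_{H^3}\lesssim\|\eta\|_{H^{7/2}}$; Korn and Poincar\'e on $\Om$ similarly give $\|u\|_{H^2}\lesssim \|u\|_{H^3}$, and the remaining pressure and time-derivative entries of $\mathcal{E}$ are dominated by the corresponding entries of $\mathcal{D}$ in the same way. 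Combining $\frac{d}{dt}\mathcal{E}+\mathcal{D}\le C\sqrt{\mathcal{E}}\,\mathcal{D}$ with $\mathcal{E}\lesssim\mathcal{D}$ and choosing $\delta$ small enough so that $C\sqrt{\mathcal{E}(t)}\le 1/2$ on the interval of existence yields $\frac{d}{dt}\mathcal{E}+\sigma\mathcal{E}\le 0$ for some $\sigma=\sigma(L_1,L_2)>0$; integrating gives $\mathcal{E}(t)\le \mathcal{E}(0) e^{-\sigma t}$ and $\int_0^\infty \mathcal{D}\,dt\le C\mathcal{E}(0)$, and the smallness propagated in this way closes the continuation argument to produce a global-in-time solution.

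The principal obstacle is the tripled bootstrap itself. Because $\mathcal{E}$ and $\mathcal{D}$ are set at the minimal regularity $H^2(\Om)\times H^3(\Sigma)$, there is essentially no slack to absorb nonlinear remainders through derivative-losing product inequalities, so the three families of estimates must be closed simultaneously, with each providing exactly the regularity the others require. In particular, the boundary-term estimates for the capillary Stokes condition $S_{\mathcal{A}}(p,u)\mathcal{N}=\eta\mathcal{N}-\mathcal{H}\mathcal{N}$, which couple $\eta$ in $H^{7/2}$ with $\pa_t\eta$ in $H^{5/2}$ and $\pa_t^2\eta$ in $H^{1/2}$, must match the interior estimates for $u$ in $H^3$ and $\pa_t u$ in $H^1$ with no derivative loss; achieving this matching, rather than adapting the high-derivative scheme of Guo and Tice, is the main technical difficulty.
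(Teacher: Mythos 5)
Your overall scaffold (basic energy identity, a differentiated energy identity, an elliptic bootstrap, then $\mathcal{E}\lesssim\mathcal{D}$ and a continuation argument) matches the paper at the coarsest level, and your recognition that at $H^2\times H^3$ regularity there is essentially no slack is the right intuition. But the specific ``tripled bootstrap'' you describe --- (i) test with $u$, (ii) differentiate in $t$ and test with $\pa_t u$, (iii) invoke elliptic regularity --- has a genuine circularity that the paper's scheme is designed to break, and you omit the ingredient that breaks it.

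The gap is the absence of \emph{horizontal}-derivative energy--dissipation estimates. The paper's horizontal energy $\mathcal{E}_\shortparallel$ and dissipation $\mathcal{D}_\shortparallel$ contain $\nabla_\ast u$, $\nabla_\ast^2 u$ and $\|\eta\|_{H^3}$, obtained by applying $\pa_j$ and $\pa_{ij}$ ($i,j\in\{1,2\}$) to \eqref{eq:new_ns} and running the weak energy identity of Lemma \ref{lem:energy}; these are separate from the time-derivative family you list. They are indispensable for two reasons. First, neither the zeroth-order identity (which yields only $\|\eta\|_{H^1}^2$) nor the $\pa_t$-identity (which yields $\|\pa_t\eta\|_{H^1}^2$, not $H^{3/2}$) gives $\|\eta\|_{H^3}^2$: that norm enters $\mathcal{E}_\shortparallel$ precisely through the $\nabla_\ast^2$-tested identity via the term $\|\nabla_\ast^2\eta\|_{H^1(\Sigma)}^2$. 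Second, and more critically, your elliptic step is circular as stated: the higher elliptic estimate \eqref{est:elliptic_2} requires $G^4=u\cdot\mathcal{N}\in H^{5/2}(\Sigma)$; by trace that is the same strength as $u\in H^3(\Om)$, which is exactly the conclusion you are trying to extract. The paper avoids the circle (see the proof of Theorem \ref{thm:enhanced_dissipation}) by writing $u\cdot\mathcal{N}=u_3-u_\ast\cdot\nabla_\ast\eta$, using $\dive_{\mathcal{A}}u=0$ to replace vertical derivatives of $u_3$ by horizontal derivatives of $u_\ast$, and then bounding $\|\nabla_\ast^2 u\|_{H^1}^2\lesssim\mathcal{D}_\shortparallel$ from the $\nabla_\ast^2$-energy estimate. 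Without the $\mathbb{D}_{\mathcal{A}}\nabla_\ast u$ and $\mathbb{D}_{\mathcal{A}}\nabla_\ast^2 u$ pieces of the dissipation, you cannot estimate $G^4$ at dissipation level, and the bound $\|u\|_{H^3}^2+\|p\|_{H^2}^2+\|\eta\|_{H^{7/2}}^2\lesssim\mathcal{D}_\shortparallel+\mathcal{E}\mathcal{D}$ does not close.

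Two further inaccuracies worth flagging. You claim the $\pa_t$-tested identity ``produces'' $\|\pa_t\eta\|_{H^{3/2}}^2$ and, dissipatively, $\|\pa_t^2\eta\|_{H^{1/2}}^2$. In fact the capillary form yields $\|\pa_t\eta\|_{H^1(\Sigma)}^2$ in energy, and neither $\|\pa_t\eta\|_{H^{3/2}}^2$ nor $\|\pa_t^2\eta\|_{H^{1/2}}^2$ comes from the differentiated identity; both are extracted afterwards from the kinematic condition and the elliptic analysis (again Theorem \ref{thm:enhanced_dissipation}). Also, your description of the elliptic step as a ``stationary capillary Stokes problem with inhomogeneous data determined by $\eta$'' suggests decoupling $\eta$ from $(u,p)$ and feeding $\eta$ in as a known forcing; the paper emphasizes that at this low regularity one must treat $(u,p,\eta)$ as a coupled triple (solved by a fixed-point argument, Theorem \ref{thm:elliptic}) precisely because decoupling is the source of circularity in the local theory.
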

\begin{remark}
   From the divergence free condition, the zero average is conserved in all time:
\[
\f{d}{dt}\int_\Sigma\eta(t)=\int_\Sigma u\cdot\mathcal{N}=\int_\Om \dive_{\mathcal{A}}u J=0.
\]
The zero average condition is natural. If it fails, we need to change the domains and unknowns like
  \[
  \Om\mapsto  \Om- \left\{0,0,\left<\eta_0\right> \right\},\ \eta\mapsto\eta-\left<\eta_0\right>,\ \text{and}\ p\mapsto p-\left<\eta_0\right>.
  \]
  The new unknowns still satisfy \eqref{eq:new_ns} in the new domain under the zero average condition.
\end{remark}
\begin{remark}
  The initial data in $\mathcal{E}(0)$, except for $u_0$ and $\eta_0$, are constructed in the proceeding of local well-posedness. The smallness of $\eta$ is to guarantee that the mapping $\Phi$ defined in \eqref{def:map} is a $C^1$ diffeomorphism, which enables us to switch the solutions in \eqref{eq:new_ns} into solutions in \eqref{eq:NS} so that \eqref{eq:NS} admits a unique global in time and decay solution. The proof of diffeomorphism is also contained in the proof of the local well-posedness.
\end{remark}
\begin{remark}
  Our main result yields the asymptotic stability of solutions in \eqref{eq:new_ns} around the equilibrium with exponentially decaying rate.
\end{remark}
 The big difference between the progress in \cite{NTY04} and our main result is the regularity of initial data. The regularity of $u_0$ is weakened to $H^2(\Om)$. We assume that $\eta_0\in H^3(\Sigma)$, since it is naturally observed from the following variants of \eqref{eq:energy-dissipation}
 \[
 \begin{aligned}
& \int_{\Om(t)} |u(t)|^2+\int_{\Sigma} \left|\eta(t) \right|^2+ \f{ 2 \left|\nabla_\ast\eta \right|^2}{\sqrt{1+ \left|\nabla_\ast\eta \right|^2}+1}+\int_0^t\int_{\Om(s)} \left|\mathbb{D}u(s) \right|^2
=  \int_{\Om(0)}|u_0|^2+ \int_{\Sigma} \left|\eta_0 \right|^2+\f{  2 \left|\nabla_\ast\eta_0 \right|^2}{\sqrt{1+ \left|\nabla_\ast\eta_0 \right|^2}+1},
\end{aligned}
 \]
that $\eta$ formally gains one more derivative than $u$ under the assumption of small data. Moreover, we still could prove the global well-posedness under our assumption. In addition, the assumption of initial data is consistent with our definition of energy defined in \eqref{def:energy}. In \cite{NTY04}, the authors required much more regularity to prove the exponential decay rate for $\|u\|_2^2+\|\eta\|_3^2$ due to their parabolic perturbation method. But in our result, the decay result is naturally included in the decay of energy. The idea of our method is borrowed partially from \cite{GT3} and \cite{ZhI17}.

\subsection{Strategy of the proof}
Our proof by the nonlinear energy method is based on a higher-regular formulation of energy-dissipation equality \eqref{eq:energy-dissipation} and a bootstrap argument of elliptic equations with variable coefficients.

\textbf{The horizontal energy-dissipation estimates}. The slab domain $\Om$ determines we can only take horizontal derivatives $\pa_t$ and $\nabla_\ast$ to preserve the form of energy-dissipation equality \eqref{eq:energy-dissipation}. Actually, we will take one temporal derivative and horizontal derivatives up to second order for \eqref{eq:new_ns} to estimate our energy, which is comparable with the fact that one temporal derivative behaves like two spatial derivatives. The choice of our energy and dissipation seems optimal in our framework, and could not be improved more.

The differential operators in \eqref{eq:new_ns} could not commute with the horizontal differential operators because of the variable coefficients. So there will be many nonlinear terms, which could be controlled by the horizontal energy, dissipation and energy, dissipation defined in \eqref{def:energy} and  \eqref{def:dissipation}. Then the following elliptic estimates will be combined to close our energy.

\textbf{Elliptic estimates}.
 Let us first explain the usual methods of treating elliptic estimates, such as \cite{GT1,TW14}, etc., which is different from ours here.
 The method to deal with the elliptic estimates is usually employed by decoupling $(u,p)$ with $\eta$.
In particular, for any $\eta\in H^3$, we will apply the elliptic theory developed in \cite{ADN} to 
\beq\label{eq:elliptic_1}
\left\{
\begin{aligned}
  &-\Delta u+\nabla p=\pa_tu+\mathfrak{F}^1\quad&\text{in}&\ \Om,\\
  &\dive u=\mathfrak{F}^2\quad&\text{in}&\ \Om,\\
  &S(p,u)e_3=(\eta-\Delta_\ast\eta)e_3+\mathfrak{F}^3\quad&\text{on}&\ \Sigma,\\
  &u=0\quad&\text{on}&\ \Sigma_b
\end{aligned}
\right.
\eeq
and obtain the estimate for $\|u\|_{H^2}$ and $\|p\|_{H^1}$. 
Here $\mathfrak{F}^i$, $i=1, 2, 3$, are perturbation terms and $\Delta_\ast=\pa_1^2+\pa_2^2$. 
Then to enhance the regularity of $(u,p,\eta)$, it is concerned with the system
\beq\label{eq:elliptic_2}
\left\{
\begin{aligned}
&-\Delta u+\nabla p=\pa_tu+\mathfrak{F}^1\quad&\text{in}&\ \Om,\\
&\dive u=\mathfrak{F}^2\quad&\text{in}&\ \Om,\\
&u=u&\ \Sigma,\\
&u=0\quad&\text{on}&\ \Sigma_b.
\end{aligned}
\right.
\eeq
By applying the horizontal energy-dissipation estimates to obtain the bounds $\|u\|_{H^3}^2+\|p\|_{H^2}^2$. 
Then utilizing the equation $S(p,u)e_3=(\eta-\Delta_\ast\eta)e_3+\mathfrak{F}^3$ to bound $\|\eta\|_{7/2}$. 
Honestly, this method is effective for the higher regular Sobolev spaces, and also has no harm to our a priori estimates. 
Unfortunately, it is bad for the local well-posedness of our case, especially in the construction of approximate solutions, 
the energy estimates might not be closed due to the limitation of regularity of $\eta$. 
The main reason is probably that $\|u\|_{H^2}$ and $\|u\|_{H^3}$ can not be constructed independently, see \cite{GT1}, 
while the systems \eqref{eq:elliptic_1} and \eqref{eq:elliptic_2} work independently so that they may run into circular proof in the local theory.

One way to conquer this difficulty comes from our experience of research on contact lines \cite{ZhI17}, where we handle $(u,p)$ coupling with $\eta$.
This is natural since $(u,p,\eta)$ are mutually dependent in the original system \eqref{eq:new_ns},
which might probably not be handled separately in the much more critical spaces. 
So we construct the elliptic estimates for the triple $(u,p,\eta)$ and obtain the bounds $\|u\|_{H^2}$ and $\|u\|_{H^3}$ in the same system,
not in the different systems \eqref{eq:elliptic_1} and \eqref{eq:elliptic_2}, respectively. 
This structure plays an important role in the a priori estimates and local well-posdness for large data. 
As far as we know, this view point has not been applied to consider the viscous surface waves previously.

\subsection{Notation and terminology}

Now, we mention some definitions, notation and conventions that we will use throughout this paper.

\begin{enumerate}[1.]
  \item Constants. The constant $C>0$ will denote a universal constant that only depends on the parameters of the problem, $N$ and $\Om$, but does not depend on the data, etc. They are allowed to change from line to line. We will write $C=C(z)$ to indicate that the constant $C$ depends on $z$. And we will write $a\lesssim b$ to mean that $a\le C b$ for a universal constant $C>0$.

  \item Norms.
      Sometimes we will write $\|\cdot\|_k$ instead of $\|\cdot\|_{H^k(\Om)}$ or $\|\cdot\|_{H^k(\Sigma)}$. We assume that functions have natural spaces. For example, the functions $u$, $p$, and $\bar{\eta}$ live on $\Om$, while $\eta$ lives on $\Sigma$. So we may write $\|\cdot\|_{H^k}$ for the norms of $u$, $p$, and $\bar{\eta}$ in $\Om$, and $\|\cdot\|_{H^s}$ for norms of $\eta$ on $\Sigma$.
\end{enumerate}
\subsection{Organization of the rest of this paper.}
In section 2, we develop the machinery of basic linear energy-dissipation as our start point of the nonlinear energy method. 
In section 3, we give the elliptic analysis based on an bootstrap argument only depending on the surface function $\eta$ in low regularity. 
In section 4, we construct two ingredients for our \textit{a priori estimates}, that are horizontal energy-dissipation estimates and elliptic estimates. 
In section 5, we construct the \textit{a priori estimates}, then develop the ideas of local well-posedness and complete the main result.

\section{Weak formulation and energy}
In this section, we develop the machinery of basic linear energy-dissipation, this is our start point of the nonlinear energy method.

Suppose that $u$ and $\eta$ are given, as well as $\mathcal{A}$, $\mathcal{N}$, etc., in terms of $\eta$. Then we consider the following system for triples $(v,q,\zeta)$:
\beq
\left\{
\begin{aligned}
&\pa_tv-\pa_t\bar{\eta} WK\pa_3v+u\cdot\nabla_{\mathcal{A}} v + \dive_{\mathcal{A}}S_{\mathcal{A}}(q,v)= F^1\quad&\text{in}&\ \Om,\\
&\dive_{\mathcal{A}}v =F^2\quad&\text{in}&\ \Om,\\
&S_{\mathcal{A}}(q,v) \mathcal{N} =\zeta\mathcal{N}-\Delta_\ast\zeta\mathcal{N}+F^3\quad&\text{on}&\ \Sigma,\\
&\pa_t\zeta = v\cdot\mathcal{N} +F^4\quad&\text{on}&\ \Sigma,\\
&v = 0\quad&\text{on}&\ \Sigma_b.\label{eq:new_NS}
\end{aligned}
\right.
\eeq
Here $(v,q,\zeta)$ represents the one temporal derivative of $(u,p,\eta)$ or the horizontal spatial derivatives of $(u,p,\eta)$ up to second order. 
The following lemma is the start point of horizontal energy-dissipation estimates.
\begin{lemma}\label{lem:energy}
Suppose that $u$ and $\eta$ are given and $(v,q,\zeta)$ are sufficiently smooth satisfying \eqref{eq:new_NS}. Then
\beq\label{eq:energy_1}
\begin{aligned}
 &\f12\f{d}{dt} \left( \|v\|_{\mathcal{H}^0}^2+ \|\zeta\|_{H^1(\Sigma)}^2 \right)+\f12 \|v\|_{\mathcal{H}^1}^2
= \left(F^1,v \right)_{\mathcal{H}^0}+ \left(q,F^2 \right)_{\mathcal{H}^0} - \int_\Sigma v\cdot F^3
+\int_{\Sigma}\left(\zeta-\Delta_\ast\zeta \right)F^4,
\end{aligned}
\eeq
where the space $\mathcal{H}^0$ and $\mathcal{H}^1$ are time-dependent spaces defined by
\[
   \mathcal{H}^0(t)= \left\{u:\Om\to\mathbb{R}^3 |  \sqrt{J}u\in L^2(\Om) \right\}
\text{ and }
  \mathcal{H}^1(t)= \left\{u: \int_{\Om} \left|\mathbb{D}_{\mathcal{A}(t)}u \right|^2J<\infty \right\},
  \]
  respectively.
 \end{lemma}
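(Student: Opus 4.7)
The plan is to take the $\cH^0$ inner product of the first equation of \eqref{eq:new_NS} with $v$ (i.e.\ multiply by $Jv$ and integrate over $\Om$) and treat the material time-derivative, convective, and stress contributions in turn, leveraging three structural ingredients: the Piola identity $\pa_k(J\cA_{jk})=0$, the boundary trace relation $J\cA^\top e_3 = \cN$ on $\Sigma$, and the geometric formula $\pa_t J = \pa_3(\pa_t\bar\eta\,W)$, which is immediate from \eqref{components} since $\pa_3 W = 1$.

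For the combination $\pa_t v - \pa_t\bar\eta\,WK\pa_3 v$, use $KJ=1$ and integrate by parts in $x_3$. The contribution from $\Sigma_b$ vanishes since $W|_{\Sigma_b}=0$ and $v|_{\Sigma_b}=0$, and using $W|_\Sigma=1$, $\bar\eta|_\Sigma = \eta$, and $\pa_3(\pa_t\bar\eta W)=\pa_t J$ one obtains
\[
2\bigl(v,\ \pa_t v - \pa_t\bar\eta\,WK\pa_3 v\bigr)_{\cH^0} = \f{d}{dt}\|v\|_{\cH^0}^2 - \int_\Sigma \pa_t\eta\,|v|^2.
\]
For the transport term, Piola together with $\dive_\cA u = 0$ (from the reference state \eqref{eq:new_ns}), $u|_{\Sigma_b}=0$, and the kinematic identity $u\cdot\cN=\pa_t\eta$ yield
\[
2(u\cdot\na_\cA v,\ v)_{\cH^0} = \int_\Sigma (u\cdot\cN)\,|v|^2 = \int_\Sigma \pa_t\eta\,|v|^2,
\]
so the two boundary pieces $\int_\Sigma\pa_t\eta\,|v|^2$ cancel exactly and these two terms together contribute only $\f{d}{dt}\|v\|_{\cH^0}^2$.

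For the stress contribution, Piola produces
\[
2(v,\ \dive_\cA S_\cA(q,v))_{\cH^0} = 2\int_\Sigma v\cdot S_\cA(q,v)\cN \ -\ 2\int_\Om J\cA_{jk}\,S_\cA(q,v)_{ij}\,\pa_k v_i.
\]
The interior integrand splits, via $\cA_{ik}\pa_k v_i = \dive_\cA v = F^2$ and the symmetry trick $\cA_{jk}\pa_k v_i\,(\mathbb{D}_\cA v)_{ij} = \tfrac12|\mathbb{D}_\cA v|^2$, into $-2(q,F^2)_{\cH^0}+\|v\|_{\cH^1}^2$. On $\Sigma$ substitute $S_\cA(q,v)\cN = \zeta\cN - \Delta_\ast\zeta\cN + F^3$, then use the kinematic relation $v\cdot\cN = \pa_t\zeta - F^4$ together with one further integration by parts in the horizontal directions, $\int_\Sigma \Delta_\ast\zeta\,\pa_t\zeta = -\tfrac12\f{d}{dt}\int_\Sigma|\na_\ast\zeta|^2$, to arrive at
\[
2\int_\Sigma v\cdot S_\cA(q,v)\cN = \f{d}{dt}\|\zeta\|_{H^1(\Sigma)}^2 - 2\int_\Sigma(\zeta-\Delta_\ast\zeta)F^4 + 2\int_\Sigma v\cdot F^3.
\]

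Assembling the three pieces, equating their sum with $2(F^1,v)_{\cH^0}$, and dividing by two produces \eqref{eq:energy_1}. The main subtlety is the cancellation of the two $\int_\Sigma\pa_t\eta\,|v|^2$ boundary terms between the geometric time derivative and the convective term: this is precisely the role of the added drift $-\pa_t\bar\eta\,WK\pa_3 v$ in the reformulated momentum equation, whose purpose is to compensate both the $\pa_t J$ weight and the free-surface flux $u\cdot\cN = \pa_t\eta$. Once this cancellation is in place, the rest is a straightforward application of Piola's identity, integration by parts, and the symmetry of $\mathbb{D}_\cA v$.
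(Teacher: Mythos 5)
Your proof is correct and takes essentially the same approach as the paper; the only genuine difference is that you carry out explicitly the identity $\int_\Om\left(\pa_t v - \pa_t\bar\eta\,WK\pa_3 v + u\cdot\nabla_{\mathcal{A}} v\right)\cdot vJ = \tfrac12\tfrac{d}{dt}\|v\|_{\mathcal{H}^0}^2$ via the Piola identity, $\pa_tJ=\pa_3(\pa_t\bar\eta\,W)$, and the cancellation of the two $\int_\Sigma\pa_t\eta|v|^2$ flux terms, where the paper simply invokes a change of variables (Reynolds transport on $\Om(t)$). One small slip: the boundary trace identity should be $J\mathcal{A}\,e_3=\mathcal{N}$ on $\Sigma$ (the third column of $J\mathcal{A}$, since the flux appears as $u_iJ\mathcal{A}_{i3}$), not $J\mathcal{A}^\top e_3=\mathcal{N}$, but your subsequent computations use the correct form so the argument is unaffected.
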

\begin{proof}
  We take the $L^2$-inner product of the first equation in \eqref{eq:new_NS} with $vJ$ over $\Om$ and get 
\beq
  \begin{aligned}
    \int_\Om  \left(\pa_tv-\pa_t\bar{\eta} WK\pa_3v+
u\cdot\nabla_{\mathcal{A}}v \right) \cdot vJ  + \int_\Om \dive_{\mathcal{A}}S_{\mathcal{A}}(q,v)\cdot vJ
    :=I+II=\int_\Om F^1\cdot vJ.
  \end{aligned}
\eeq
A simple computation by changing of variables gives 
\beq\label{eq:e_1}
I=\int_\Om \left(\pa_tv-\pa_t\bar{\eta} WK\pa_3v +  u\cdot\nabla_{\mathcal{A}}v \right)\cdot vJ = \f12\f{d}{dt}\int_\Om |v|^2J.
  \eeq
Then by integration by parts,
  \beq\label{eq:e_2}
\begin{aligned}
II =\int_\Om \dive_{\mathcal{A}}S_{\mathcal{A}}(q,v)\cdot vJ
&=\int_\Om-q\dive_{\mathcal{A}}vJ+\f12 \left|\mathbb{D}_{\mathcal{A}}v \right|^2J + \int_\Sigma S_{\mathcal{A}}(q,v)\mathcal{N}\cdot v\\
&=\int_\Om-q\dive_{\mathcal{A}}vJ+\f12 \left|\mathbb{D}_{\mathcal{A}}v \right|^2J
+\int_\Sigma \left(\zeta-\Delta_\ast\zeta \right)\left(v\cdot\mathcal{N} \right) + v\cdot F^3.
\end{aligned}
  \eeq
 Then another integration by parts shows 
\beq\label{eq:e_3}
 \begin{aligned}
\int_\Sigma \left(\zeta-\Delta_\ast\zeta \right) \left(v\cdot\mathcal{N} \right)
&=\int_\Sigma \left(\zeta-\Delta_\ast\zeta \right) \left(\pa_t\zeta-F^4 \right)
=\int_\Sigma\zeta\pa_t\zeta+\nabla_\ast\zeta \pa_t\nabla_\ast\zeta -  \left(\zeta-\Delta_\ast\zeta \right)F^4\\
&=\f12\f{d}{dt}\int_\Sigma  \left|\zeta \right|^2+ \left|\nabla_\ast\zeta \right|^2-\int_\Sigma \left(\zeta-\Delta_\ast\zeta \right)F^4.
\end{aligned}
  \eeq
Consequently, \eqref{eq:energy_1} holds after combining \eqref{eq:e_1}-\eqref{eq:e_3}.
\end{proof}
\begin{remark}
The space $\mathcal{H}^0$ is endowed with the norm $\left(\int_\Om   |u|^2J\right)^{1/2}$, that is comparable with the usual $L^2(\Om)$ norm,
and the space $\mathcal{H}^1$ is equivalent to the usual $H^1(\Om)$. The equivalence is referred to \cite[Lemma 2.1]{GT1} for small data of $\eta$,
and \cite[Lemma 2.9]{Wu} for large data of $\eta$. The $\mathcal{H}^0$ is introduced naturally, 
since after a change of variables, the $L^2(\Om(t))$-norm is reduced to $\mathcal{H}^0$-norm.
\end{remark}
\section{Elliptic estimates}
In this section, we give the elliptic analysis based on the bootstrap argument in low regularity Sobolev spaces. 
The results are important in section \ref{se4} and section \ref{se5}.

 Suppose $\eta$, $\mathcal{A}$, $\mathcal{N}$, etc., are given. We consider the following elliptic systems
 \beq\label{eq:Stokes_1}
 \left\{
 \begin{aligned}
&\dive_{\mathcal{A}}S_{\mathcal{A}}(q,v)=  G^1\quad&\text{in}&\ \Om,\\
&\dive_{\mathcal{A}}v=G^2\quad&\text{in}&\ \Om,\\
 &S_{\mathcal{A}}(q,v)\mathcal{N}=\xi\mathcal{N}-\Delta_\ast\xi\mathcal{N}+G^3\quad&\text{on}&\ \Sigma,\\
 &v\cdot\mathcal{N}=G^4\quad&\text{on}&\ \Sigma,\\
 &v=0\quad&\text{on}&\ \Sigma_b,
 \end{aligned}
 \right.
 \eeq
 where $\Delta_\ast=\pa_1^2+\pa_2^2$ is the horizontal Laplace operator.

In order to solve \eqref{eq:Stokes_1}, we first consider the case $\xi=\eta=0$, 
so that \eqref{eq:Stokes_1} is reduced to the usual Stokes system with constant coefficient and we will handle $(v,q)$ decoupling with $\xi$ by the fixed point theory.
We will treat \eqref{eq:Stokes_1} as a perturbation of the usual Stokes system and use the bootstrap argument to obtain the elliptic estimates. 
The key point of the bootstrap argument is the equation $\dive_{\mathcal{A}}v=G^2$, since this ensures that the normal part of $v$ could be represented by the horizontal components. If we suppose that $\|\eta\|_3^2\le \delta$ for a sufficiently small $\delta$,
then by the Sobolev embedded theory for $\left\|\nabla_\ast\eta \right\|_{L^\infty(\Sigma)}^2+ \left\|\nabla\bar{\eta} \right\|_{L^\infty(\Om)}^2 \lesssim \|\eta\|_3^2$, we have
 \beq\label{est:bound_coeffi}
\left\|\mathcal{N} \right\|_{L^\infty(\Sigma)}^2 + \left\|\mathcal{A} \right\|_{L^\infty(\Om)}^2\lesssim 1.
 \eeq
 \subsection{Stokes system}
 We first consider the following Stokes system of constant coefficients
 \beq\label{eq:stokes_2}
 \left\{
 \begin{aligned}
   &\dive S(q,v)=R^1\quad&\text{in}&\ \Om,\\
   &\dive v=R^2\quad&\text{in}&\ \Om,\\
   &\mathbb{D}v\nu\cdot\tau=R^3 \quad&\text{on}&\ \Sigma,\\
   &v\cdot\nu=R^4\quad&\text{on}&\ \Sigma,\\
   &v=0\quad&\text{on}&\ \Sigma_b,
 \end{aligned}
 \right.
 \eeq
 where $\nu$ and $\tau$ are the unit normal and tangential of $\Sigma$, respectively.
 We now formulate a definition of weak solutions to \eqref{eq:stokes_2}.
\begin{definition}\label{def:weak}
Suppose $R^1\in (H^1)^\ast(\Om)$, $R^2\in H^0(\Om)$, $R^3\in H^{-1/2}(\Sigma)$ and $R^4\in H^{1/2}(\Sigma)$ satisfying
   \beq
   \int_\Om R^2=\int_\Sigma R^4.
   \eeq
A pair $(v,q)\in H^1(\Om)\times H^0(\Om)$ is called a weak solution of \eqref{eq:stokes_2}, provided that $\dive v=R^2$, 
$v\cdot\nu=R^4$ on $\Sigma$, $v=0$ on $\Sigma_b$, and
   \beq
   \f12(v,w)_1- \left(q,\dive w \right)_0=\left<R^1,w\right>_{\ast}+  \left<R^3, w\cdot\tau\right>_{H^{-1/2}(\Sigma)\times H^{1/2}(\Sigma)}
   \eeq
 for any $w\in\mathscr{W}= \left\{w\in H^1(\Om) |w\cdot\nu=0\ \text{on}\ \Sigma,\ w=0\ \text{on}\ \Sigma_b \right\}$.
\end{definition}
 Now we establish the existence and uniqueness of weak solutions for \eqref{eq:Stokes_1}.
 \begin{Lemma}
Let $\left(R^1,R^2,R^3,R^4 \right)$ be as in Definition \ref{def:weak},
there exists a unique pair $(v,q)\in H^1(\Om)\times H^0(\Om)$ as a weak solution to \eqref{eq:Stokes_1}.
 Furthermore, 
   \beq
\|v\|_1^2+\|q\|_0^2\lesssim  \left\|R^1 \right\|_{\ast}^2+ \left\|R^2 \right\|_0^2+ \left\|R^3 \right\|_{-1/2}^2+ \left\|R^4 \right\|_{1/2}^2.
   \eeq
 \end{Lemma}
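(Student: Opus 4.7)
The plan is the standard three step variational argument for Stokes-type problems with mixed (Navier-slip/no-slip) boundary data: reduce to a problem with homogeneous divergence and normal trace, solve the reduced variational problem on the solenoidal subspace by Lax--Milgram, and then recover the pressure by a de Rham-type argument. The necessary coercivity will come from Korn's inequality, which is available because $v=0$ on the full bottom $\Sigma_b$.

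First I would lift the inhomogeneous data. Using the compatibility condition $\int_\Om R^2=\int_\Sigma R^4$, a standard Bogovskii-type construction produces $\bar v\in H^1(\Om)$ with $\dive\bar v=R^2$ in $\Om$, $\bar v\cdot\nu=R^4$ on $\Sigma$, $\bar v=0$ on $\Sigma_b$, and
\beno
\|\bar v\|_1\lesssim \|R^2\|_0+\|R^4\|_{1/2}.
\eeno
Setting $v=\tilde v+\bar v$, the problem for $(v,q)$ reduces to finding $(\tilde v,q)\in\mathscr{W}\times H^0(\Om)$ with $\dive\tilde v=0$ and
\beno
\tfrac12(\tilde v,w)_1-(q,\dive w)_0=\langle R^1,w\rangle_\ast+\langle R^3,w\cdot\tau\rangle_{H^{-1/2}\times H^{1/2}}-\tfrac12(\bar v,w)_1
\eeno
for every $w\in\mathscr{W}$, where $(v,w)_1:=\int_\Om \mathbb D v:\mathbb D w$ is the Korn pairing arising from the integration by parts that produced the weak form in Definition~\ref{def:weak}.

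For the reduced problem I would restrict the test class to the solenoidal subspace $\mathscr V=\{w\in\mathscr{W}:\dive w=0\}$, which kills the pressure term. On $\mathscr W$ one has Korn's inequality $\|w\|_1^2\lesssim \int_\Om|\mathbb D w|^2$ because $w$ vanishes on $\Sigma_b$, which has positive surface measure; thus $(\cdot,\cdot)_1$ is an inner product equivalent to the usual $H^1$ inner product on $\mathscr W$. The right-hand side is a bounded linear functional on $\mathscr V$ with norm controlled by $\|R^1\|_\ast+\|R^3\|_{-1/2}+\|R^2\|_0+\|R^4\|_{1/2}$. Lax--Milgram then yields a unique $\tilde v\in\mathscr V$ solving the reduced equation over $\mathscr V$, together with the estimate $\|\tilde v\|_1\lesssim \|R^1\|_\ast+\|R^3\|_{-1/2}+\|R^2\|_0+\|R^4\|_{1/2}$.

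The part I expect to be the main obstacle is the recovery of the pressure, since it requires a de Rham-type theorem on the slab with mixed boundary conditions rather than purely Dirichlet ones. The idea is to view
\beno
\Lambda(w):=\tfrac12(\tilde v+\bar v,w)_1-\langle R^1,w\rangle_\ast-\langle R^3,w\cdot\tau\rangle
\eeno
as a bounded linear functional on $\mathscr W$ that vanishes on $\mathscr V$; the de Rham/Neças lemma adapted to $\mathscr W$ (for instance through the surjectivity of $\dive:\mathscr W\to H^0(\Om)/\mathbb R$ established by the same Bogovskii construction, or the orthogonal decomposition of $\mathscr W^\ast$) produces a unique $q\in H^0(\Om)$ with $\Lambda(w)=(q,\dive w)_0$ for every $w\in\mathscr W$, together with $\|q\|_0\lesssim \|\Lambda\|_{\mathscr W^\ast}$. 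This is exactly the weak formulation asserted in Definition~\ref{def:weak}, and combining the two bounds gives the stated estimate. Uniqueness of $(v,q)$ follows by testing the homogeneous equation against $v\in\mathscr W$ (legitimate since the homogeneous data force $v\in\mathscr V$) to kill $v$ via Korn, and then using the surjectivity of $\dive$ once more to kill $q$.
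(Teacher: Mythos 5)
Your proposal is correct and mirrors the paper's architecture (lift the inhomogeneous divergence and normal-trace data, solve on the solenoidal subspace by a coercive variational argument, then recover the pressure from a functional that annihilates $\mathscr V$), but the lifting step is carried out differently. You invoke a Bogovskii-type right inverse of the divergence adapted to the mixed trace data, which one would realize concretely by first extending the boundary data $\left(R^4,0\right)$ to an $H^1$ vector field and then correcting its divergence with the zero-trace Bogovskii operator (legitimate because the compatibility condition $\int_\Om R^2=\int_\Sigma R^4$ makes the residual mean-free). The paper instead builds the lift explicitly in two pieces: it first solves a scalar Neumann problem $\Delta\varphi=R^2$ with $\nabla\varphi\cdot\nu=R^4$ on $\Sigma$, $\nabla\varphi\cdot\nu=0$ on $\Sigma_b$, and then, since $\nabla\varphi$ still has a nonzero tangential trace on $\Sigma_b$, cancels it with a divergence-free correction $\nabla\times\psi$ constructed via a vector potential argument from Beale, so that $\nabla\varphi+\nabla\times\psi$ vanishes on the whole of $\Sigma_b$. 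The paper's construction is more explicit and self-contained (it produces an $H^2$ potential lift), while yours is shorter but leans on a Bogovskii/de~Rham toolbox on the slab with mixed boundary data; both are standard and either closes the argument. Your remark that coercivity on $\mathscr W$ comes from Korn using the full Dirichlet condition on $\Sigma_b$ is the right justification for why the Riesz representation / Lax--Milgram step works, which the paper uses implicitly. The pressure recovery in both cases rests on the surjectivity of $\dive:\mathscr W\to H^0(\Om)$ up to constants, which you get for free from the same lifting argument.
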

 \begin{proof}
We first consider 
   \beq
   \left\{
   \begin{aligned}
     &\Delta\varphi=R^2\quad&\text{in}&\ \Om,\\
     &\nabla\varphi\cdot\nu=R^4\quad&\text{on}&\ \Sigma,\\
     &\nabla\varphi\cdot\nu=0\quad&\text{on}&\ \Sigma_b.
   \end{aligned}
   \right.
   \eeq
From the well-known theory of elliptic equations subjecting to Neumann boundary conditions, 
there exists $\varphi\in H^2(\Om)$ satisfying
   \beq
 \left\|\nabla\varphi \right\|_1^2\lesssim  \left\|R^2 \right\|_0^2+ \left\|R^4 \right\|_{1/2}^2.
   \eeq
Then, as \cite[Lemma 4.2]{Beale1}, there exists a vector-valued function $\psi\in H^2(\Om)$ satisfying
   \[
   \left\{
   \begin{aligned}
   &\psi=0,\   \nabla\psi\cdot\nu=\nu\times  \left(- \pa_1\varphi, - \pa_2\varphi, 0 \right)^T\quad&\text{on}&\ \Sigma_b,\\
   &\psi=0 \  &\text{on}&\ \Sigma
   \end{aligned}
   \right.
   \]
such that
   \beq
   \left\{
\begin{aligned}
     &-\nabla\cdot \left(\nabla\times\psi \right)=0\quad&\text{in}&\ \Om,\\
     & \left(\nabla\times\psi \right)\cdot\nu=0\quad&\text{on}&\ \Sigma,\\
     &\nabla\times\psi= - \left(\nabla_\ast\varphi, 0 \right)^T\quad&\text{on}&\ \Sigma_b.
   \end{aligned}
   \right.
   \eeq
 Combining the extension from the boundary $\pa\Om$ to $\Om$ (The similar method is also used in \cite[page 25-27]{La}), it satisfies 
\[
\left\|\nabla\times\psi \right\|_{H^1}^2\lesssim  \left\|\nabla\varphi \right\|_1^2\lesssim  \left\|R^2 \right\|_0^2+ \left\|R^4 \right\|_{1/2}^2.
   \]
So we can derive
\beq
\left\{
\begin{aligned}
&\dive \left(v-\nabla\varphi-\nabla\times\psi \right)=0\quad&\text{in}&\ \Om,\\
& \left(v-\nabla\varphi-\nabla\times\psi \right)\cdot\nu=0\quad&\text{on}&\ \Sigma,\\
&v-\nabla\varphi-\nabla\times\psi=0  \quad&\text{on}&\ \Sigma_b.
\end{aligned}
\right.
\eeq
Then we switch the unknowns to $\bar{v}=v-\nabla\varphi-\nabla\times\psi$, and then restrict the test functions to $w\in{}_0^0H^1_\sigma(\Om)$ so that $\dive \bar{v}=0$ and the pressureless weak formulation for $v$ is
   \beq\label{eq:weak_1}
   \f12\left(\bar{v},w \right)_1=-\f12 \left(\nabla\varphi+\nabla\times\psi,w \right)_1+ \left<R^1,w\right>_{\ast}+\left<R^3, w\cdot\tau\right>_{H^{-1/2}(\Sigma)\times H^{1/2}(\Sigma)}
   \eeq
for any $w\in{}_0^0H^1_\sigma(\Om)$. Here the space ${}_0^0H^1_\sigma(\Om)$ is defined as
    \[
{}_0^0H^1_\sigma(\Om)= \left\{u\in H^1(\Om):  \dive u=0\ \text{in}\ \Om,\ w\cdot\nu=0\ \text{on}\ \Sigma,\ w=0\ \text{on}\ \Sigma_b \right\}.
    \]
Then the Riesz representation theorem provides a unique $\bar{v}\in{}_0^0H^1_\sigma(\Om)$ satisfying \eqref{eq:weak_1} and
   \[
   \begin{aligned}
   \|\bar{v}\|_1^2&\lesssim  \|\varphi\|_2^2+  \|\psi\|_2^2+ \left\|R^1 \right\|_{\ast}^2+ \left\|R^3 \right\|_{-1/2}^2
   \lesssim  \left\|R^1 \right\|_{\ast}^2+ \left\|R^2 \right\|_0^2+ \left\|R^3 \right\|_{-1/2}^2+ \left\|R^4 \right\|_{1/2}^2.
   \end{aligned}
   \]
To introduce the pressure $q$, we define a linear functional on $(\mathscr{W})^\ast$ as the difference of left-hand side and the right-hand side of \eqref{eq:weak_1}.
Then treating pressure as the Lagrangian of Stokes operator(see for instance \cite{Te}), there exists a unique $q\in H^0(\Om)$ (possibly up to a constant) satisfying
\beq
\|q\|_0^2\lesssim  \left\|R^1 \right\|_{\ast}^2+ \left\|R^2 \right\|_0^2+ \left\|R^3 \right\|_{-1/2}^2+ \left\|R^4 \right\|_{1/2}^2.
\eeq
 \end{proof}
Now we develop the second-order regularity of \eqref{eq:stokes_2}.
\begin{proposition}\label{prop:elliptic}
   Suppose $R^1\in H^0(\Om)$, $R^2\in H^1(\Om)$, $R^3\in H^{1/2}(\Sigma)$ and $R^4\in H^{3/2}(\Sigma_b)$ satisfying
   \beq
   \int_\Om R^2=\int_\Sigma R^4.
   \eeq
Then the problem \eqref{eq:stokes_2} admits a unique strong solution $(v,q)\in H^2(\Om)\times H^1(\Om)$ and 
\beq\label{est:stokes_1}
   \|v\|_2^2+\|q\|_1^2  \lesssim  \left\|R^1 \right\|_0^2+ \left\|R^2 \right\|_1^2+ \left\|R^3 \right\|_{1/2}^2+ \left\|R^4 \right\|_{3/2}^2.
   \eeq
\end{proposition}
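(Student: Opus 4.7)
The plan is to split the argument into three steps: first observe that a weak solution $(v,q) \in H^1(\Om) \times H^0(\Om)$ already exists and is unique by the preceding lemma (the hypotheses here sit inside the weaker hypotheses used there, and the compatibility $\int_\Om R^2 = \int_\Sigma R^4$ is the same), so the entire task is to upgrade its regularity to $H^2 \times H^1$ and prove \eqref{est:stokes_1}. As a preparation I would repeat the lifting construction of the previous lemma one order higher: solve the Neumann problem $\Delta\varphi = R^2$ with $\nabla\varphi\cdot\nu = R^4$ on $\Sigma$ and $\nabla\varphi\cdot\nu = 0$ on $\Sigma_b$ to obtain $\varphi \in H^3(\Om)$ with $\|\varphi\|_3^2 \lesssim \|R^2\|_1^2 + \|R^4\|_{3/2}^2$, together with an auxiliary vector field $\psi \in H^3(\Om)$ correcting the $\Sigma_b$ data. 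Setting $\bar v := v - \nabla\varphi - \nabla\times\psi$ reduces \eqref{eq:stokes_2} to a Stokes problem for $(\bar v, q)$ with $\dive \bar v = 0$, $\bar v\cdot\nu = 0$ on $\Sigma$, $\bar v = 0$ on $\Sigma_b$, and modified sources $(\tilde R^1, 0, \tilde R^3, 0)$ whose norms are controlled by the original data.

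The key step is then horizontal regularity via difference quotients. Since $\Om$ has flat top and bottom boundaries, is horizontally periodic, and the Stokes operator has constant coefficients, the discrete horizontal derivative $D^h_i$, $i = 1, 2$, commutes with every equation and boundary condition in the reduced system. Applying the weak $H^1 \times H^0$ estimate of the preceding lemma to $(D^h_i \bar v, D^h_i q)$, whose sources are $(D^h_i \tilde R^1, 0, D^h_i \tilde R^3, 0)$, gives bounds uniform in $h$: one uses the standard duality pairing $\|D^h_i \tilde R^3\|_{H^{-1/2}(\Sigma)} \lesssim \|\tilde R^3\|_{H^{1/2}(\Sigma)}$ for the boundary source. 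Passing $h \to 0$ yields $\partial_1 \bar v, \partial_2 \bar v \in H^1(\Om)$ and $\partial_1 q, \partial_2 q \in H^0(\Om)$ with the expected norms.

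Vertical regularity is then read off from the PDE itself. The divergence-free relation $\partial_3 \bar v_3 = -\partial_1 \bar v_1 - \partial_2 \bar v_2$ puts $\partial_3 \bar v_3$ in $H^1$, hence $\partial_3^2 \bar v_3 \in H^0$. The third component of the momentum equation, $\partial_3 q = \Delta \bar v_3 - \tilde R^1_3$, then gives $\partial_3 q \in H^0$ and therefore $q \in H^1$. Substituting back into the first two components yields $\partial_3^2 \bar v_k = -\tilde R^1_k + \partial_k q - \partial_1^2 \bar v_k - \partial_2^2 \bar v_k \in H^0$ for $k = 1, 2$. Adding the lifting $\nabla\varphi + \nabla\times\psi \in H^2$ recovers the full $H^2 \times H^1$ estimate for $(v,q)$. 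The main obstacle I foresee is the bookkeeping in the reduction step: the modified shear-stress data $\tilde R^3 = R^3 - (\mathbb{D}(\nabla\varphi + \nabla\times\psi)\nu) \cdot \tau$ must remain in $H^{1/2}(\Sigma)$ with norm controlled by the original data, and this rests on the lifting being in $H^3(\Om)$ so that its second derivatives trace into $H^{1/2}(\Sigma)$. Once this is verified the difference-quotient scheme closes cleanly and no genuinely new ideas beyond the preceding lemma are required.
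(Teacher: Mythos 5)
Your proof is correct, and it takes a genuinely different route from the paper. The paper dispatches this proposition in one sentence by invoking Temam's treatment of the Stokes problem together with Agmon--Douglis--Nirenberg (Theorem 10.5 of \cite{ADN}) for the $H^2\times H^1$ estimate, i.e.\ it outsources the regularity to the classical theory of elliptic systems satisfying the complementing condition. You instead give a self-contained elementary proof: run the lifting construction of the preceding lemma one order higher (so $\varphi,\psi\in H^3$ from $R^2\in H^1$, $R^4\in H^{3/2}$), reduce to a divergence-free problem for $\bar v=v-\nabla\varphi-\nabla\times\psi$ with homogeneous normal and Dirichlet data, exploit that $\Om$ is a horizontally periodic slab with flat top and bottom so that the horizontal difference quotient $D^h_i$ commutes with the constant-coefficient Stokes operator and with every boundary condition, apply the already-established $H^1\times H^0$ weak estimate to $(D^h_i\bar v,D^h_i q)$ with the uniform-in-$h$ bounds $\|D^h_i\tilde R^1\|_{(H^1)^*}\lesssim\|\tilde R^1\|_0$ and $\|D^h_i\tilde R^3\|_{-1/2}\lesssim\|\tilde R^3\|_{1/2}$, send $h\to0$ to get horizontal derivatives, and finally read off $\partial_3^2\bar v$ and $\partial_3 q$ algebraically from the divergence condition and the momentum equation. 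This is the classical Cattabriga/Nirenberg difference-quotient scheme for Stokes in a slab, and each step holds; the one delicate point is exactly the one you flag, that the lifting must be $H^3$ so that $\tilde R^1\in H^0$ and $\tilde R^3\in H^{1/2}(\Sigma)$. The trade-off: the paper's citation is shorter and geometry-independent, while your argument is transparent and self-contained but relies essentially on the flat, translation-invariant geometry of $\Om$. (Minor slip: from $\nabla q-\Delta\bar v=\tilde R^1$ one gets $\partial_3 q=\tilde R^1_3+\Delta\bar v_3$, not $\Delta\bar v_3-\tilde R^1_3$; this does not affect the regularity conclusion.)
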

\begin{proof}
Since the Stokes system \eqref{eq:stokes_2} is elliptic, following similar arguments in \cite{Te}, we can get the result.
Furthermore, \cite[Theorem 10.5]{ADN} provides \eqref{est:stokes_1}.
 \end{proof}
Rewriting the Stokes system \eqref{eq:Stokes_1} as the following perturbation form:
 \beq\label{eq:stokes_3}
 \left\{
 \begin{aligned}
   &\dive S(q,v)=R^1\quad&\text{in}&\ \Om,\\
   &\dive v=R^2\quad&\text{in}&\ \Om,\\
   &\mathbb{D}v\nu\cdot\tau=R^3\quad&\text{on}&\ \Sigma,\\
   &v\cdot\nu=R^4\quad&\text{on}&\ \Sigma,\\
   &v=0\quad&\text{on}&\ \Sigma_b,
 \end{aligned}
 \right.
 \eeq
 and
 \beq\label{eq:surface_1}
 q-\f1{|\mathcal{N}|^2}\mathbb{D}_{\mathcal{A}}v\mathcal{N}\cdot\mathcal{N}
=\xi-\Delta_\ast\xi+\f1{|\mathcal{N}|^2}G^3\cdot\mathcal{N}\quad\text{on}\ \Sigma,
 \eeq
 where
 \begin{align*}
R^1 & =R^1(v,q)=G^1+\dive_{I-\mathcal{A}}S_{\mathcal{A}}(q,v)-\dive\mathbb{D}_{I-\mathcal{A}}v,   &    R^2=R^2(v)=G^2+\dive_{I-\mathcal{A}}v,  \\
R^3  & =R^3(v)=-G^3\cdot\mathcal{T}+\mathbb{D}_{I-\mathcal{A}}v\mathcal{N}\cdot\mathcal{T}
+\mathbb{D}v(\nu-\mathcal{N})\cdot\tau+\mathbb{D}v\mathcal{N}\cdot(\tau-\mathcal{T}),     & R^4=R^4(v)=G^4+v\cdot(\nu-\mathcal{N}),
 \end{align*}
 and $\mathcal{T}$ is the tangential of free surface.
\begin{theorem}\label{thm:elliptic}
Suppose $G^1\in H^0(\Om)$, $G^2\in H^1(\Om)$, $G^3\in H^{1/2}(\Sigma)$, $G^4\in H^{3/2}(\Sigma)$ and $\|\eta\|_{5/2}\le \delta$ for $\delta$ sufficiently small. Then there exists a unique triple $(v,q,\xi)\in H^2(\Om)\times H^1(\Om)\times  H^{5/2}(\Sigma)$ solving \eqref{eq:Stokes_1}. Moreover,
\beq\label{est:elliptic_1}
   \|v\|_2^2+\|q\|_1^2+\|\xi\|_{5/2}^2\lesssim  \left\|G^1 \right\|_0^2 
+ \left\|G^2 \right\|_1^2+ \left\|G^3 \right\|_{1/2}^2+ \left\|G^4 \right\|_{3/2}^2.
   \eeq
Furthermore, if $G^1\in H^1(\Om)$, $G^2\in H^2(\Om)$, $G^3\in H^{3/2}(\Sigma)$ and $G^4\in H^{5/2}(\Sigma)$. Then
\beq\label{est:elliptic_2}
\begin{aligned}
   \|v\|_3^2+\|q\|_2^2+\|\xi\|_{7/2}^2&\lesssim  \left\|G^1 \right\|_1^2+ \left\|G^2 \right\|_2^2 
+ \left\|G^3 \right\|_{3/2}^2+ \left\|G^4 \right\|_{5/2}^2
 +\|\eta\|_{7/2}^2 \left( \left\|G^1 \right\|_0^2+ \left\|G^2 \right\|_1^2 
+ \left\|G^3 \right\|_{1/2}^2+ \left\|G^4 \right\|_{3/2}^2 \right).
 \end{aligned}
 \eeq
\end{theorem}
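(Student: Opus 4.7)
The plan is to treat \eqref{eq:Stokes_1} as a perturbation of the flat Stokes system \eqref{eq:stokes_3}, coupled through the boundary to the second order scalar equation \eqref{eq:surface_1} for $\xi$. Given a candidate pair $(v,q)\in H^2(\Om)\times H^1(\Om)$, I would construct the data $R^1(v,q),R^2(v),R^3(v),R^4(v)$ from the explicit formulas preceding the theorem, apply Proposition \ref{prop:elliptic} to obtain a new pair $(v',q')$, and define a map $T(v,q):=(v',q')$. Every perturbation term contains at least one factor of the form $I-\mathcal{A}$, $\mathcal{N}-\nu$, $\mathcal{T}-\tau$, or $\nabla(I-\mathcal{A})$, each controlled in the relevant norm by derivatives of $\eta$. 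Under the smallness assumption $\|\eta\|_{5/2}\le\delta$, Sobolev embedding together with \eqref{est:bound_coeffi} yields $\|I-\mathcal{A}\|_{L^\infty}+\|\mathcal{N}-\nu\|_{L^\infty(\Sigma)}\lesssim\delta$ and corresponding $L^p$ bounds on one more derivative, so that $T$ is a contraction on a small ball whose radius is comparable with the $G^i$ data. The Banach fixed point theorem then produces a unique $(v,q)\in H^2(\Om)\times H^1(\Om)$ with the correct bound.

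With $(v,q)$ in hand I would extract $\xi$ from \eqref{eq:surface_1}: isolating the normal component of the stress condition yields a uniformly elliptic second order scalar equation for $\xi$ on $\Sigma$ whose right hand side is controlled in $H^{1/2}(\Sigma)$ by $\|q\|_1$, $\|v\|_2$, $\|G^3\|_{1/2}$ and powers of $\|\eta\|_{5/2}$; since $\Sigma$ is a flat torus with zero-average convention, standard periodic elliptic theory then gives $\xi\in H^{5/2}(\Sigma)$ satisfying the desired bound, completing \eqref{est:elliptic_1}. For the higher regularity estimate \eqref{est:elliptic_2} I would bootstrap: Proposition \ref{prop:elliptic} admits an $H^3\times H^2$ analogue from the same \cite{ADN} theory, which I apply to \eqref{eq:stokes_3} again, now with $R^1\in H^1$, $R^2\in H^2$, $R^3\in H^{3/2}$, $R^4\in H^{5/2}$. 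Each $R^i$ splits schematically as $G^i+(\text{coefficient in }\eta)\cdot(\text{derivative of }v,q)$, and one separates the top order $\eta$-derivatives (bounded by $\|\eta\|_{7/2}$) paired with lower order norms of $(v,q)$ already controlled by \eqref{est:elliptic_1}, from the low order $\eta$-factors (bounded by $\|\eta\|_{5/2}\le\delta$) paired with $\|v\|_3$ and $\|q\|_2$, which are absorbed by smallness of $\delta$. This split produces exactly the additive $\|\eta\|_{7/2}^2$ term on the right hand side of \eqref{est:elliptic_2}, and an identical argument on $\Sigma$ upgrades $\xi$ to $H^{7/2}$.

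The main technical obstacle is the bookkeeping for the boundary perturbation $R^3(v)=-G^3\cdot\mathcal{T}+\mathbb{D}_{I-\mathcal{A}}v\mathcal{N}\cdot\mathcal{T}+\mathbb{D}v(\nu-\mathcal{N})\cdot\tau+\mathbb{D}v\mathcal{N}\cdot(\tau-\mathcal{T})$, which mixes boundary traces of first derivatives of $v$ with differences of curved and flat normals/tangents. Estimating it in $H^{3/2}(\Sigma)$ requires the fractional Sobolev algebra property together with trace theory $H^k(\Om)\hookrightarrow H^{k-1/2}(\Sigma)$, and one has to keep $\|\eta\|_{7/2}$ and $\|\eta\|_{5/2}$ rigorously separated so that the contribution carrying the top $\eta$-norm appears only multiplied by lower order data, exactly as in \eqref{est:elliptic_2}. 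Uniqueness in the higher regularity class is then inherited from uniqueness at the $H^2\times H^1$ level already furnished by the fixed point step.
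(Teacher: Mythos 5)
Your treatment of the existence part and of \eqref{est:elliptic_1} matches the paper: a Banach fixed point for the perturbed Stokes system \eqref{eq:stokes_3} built on Proposition \ref{prop:elliptic}, followed by isolating the normal stress component \eqref{eq:surface_1} and solving for $\xi$ by Lax--Milgram and surface elliptic regularity. Where you genuinely diverge is the higher estimate \eqref{est:elliptic_2}. You propose to invoke an $H^3(\Om)\times H^1(\Om)$ version of the constant-coefficient ADN estimate and feed in $R^1\in H^1,\dots,R^4\in H^{5/2}$ directly, splitting the coefficient factors into a $\|\eta\|_{7/2}$ piece paired with already-controlled lower norms of $(v,q)$ and a $\|\eta\|_{5/2}\le\delta$ piece paired with $\|v\|_3,\|q\|_2$ to be absorbed. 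The paper instead never uses any higher-order ADN black box: it applies the two horizontal derivatives $\pa_j$, $j=1,2$, to the variable-coefficient system \eqref{eq:Stokes_1} itself, recognizes the differentiated system \eqref{eq:stokes_7} as being of the same form with new data $G^{k,1}$, and feeds it back through the already-proved $H^2\times H^1\times H^{5/2}$ estimate \eqref{est:elliptic_1}; the missing normal derivatives $\pa_3^3v$ and $\pa_3^2q$ are then read off directly from the momentum equation and the divergence constraint $\dive_{\mathcal{A}}v=G^2$. Both routes produce the same additive $\|\eta\|_{7/2}^2$-times-lower-order term and an absorbable $\varepsilon(\|\nabla^3v\|_0^2+\|\nabla^2q\|_0^2+\|\xi\|_{7/2}^2)$, so your split of high versus low $\eta$-regularity is exactly the right bookkeeping. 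What the paper's tangential-differentiation route buys is self-containment (only the $H^2$-level Proposition \ref{prop:elliptic} is ever cited, not a higher-order ADN statement) and the fact that the $\|\xi\|_{7/2}$ bound comes for free as $\|\pa_j\xi\|_{5/2}$ inside the differentiated system, rather than requiring a separate repeat pass through the surface equation as in your outline; what your route buys is avoiding the explicit recovery of $\pa_3$-derivatives from the equations.
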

\begin{proof}
We will solve \eqref{eq:stokes_3} and \eqref{eq:surface_1} by Banach's fixed point theorem. For any $(u,p)\in H^2(\Om)\times H^1(\Om)$, we have
\beq\label{est:bound}
\begin{aligned}
 & \left\|R^1(u,p) \right\|_0^2+ \left\|R^2(u) \right\|_1^2+ \left\|R^3(u) \right\|_{1/2}^2+ \left\|R^4(u) \right\|_{3/2}^2\\
\lesssim &  \left\|G^1 \right\|_0^2+ \left\|G^2 \right\|_1^2+ \left\|G^3 \right\|_{1/2}^2+ \left\|G^4 \right\|_{3/2}^2+P \left(\|\eta\|_{5/2}^2 \right) \left(\|u\|_2^2+\|p\|_1^2 \right),
\end{aligned}
\eeq
where $P(\cdot)$ is a polynomial satisfying $P(0)=0$. Now we consider the linear problem:
\beq\label{eq:stokes_4}
\left\{
\begin{aligned}
&\dive S(q,v)=R^1(u,p)\quad&\text{in}&\ \Om,\\
&\dive v=R^2(u)\quad&\text{in}&\ \Om,\\
&\mathbb{D}v\nu\cdot\tau=R^3(u)\quad&\text{on}&\ \Sigma,\\
&v\cdot\nu=R^4(u)\quad&\text{on}&\ \Sigma,\\
&v=0\quad&\text{on}&\ \Sigma_b.
\end{aligned}
\right.
\eeq
From Proposition \ref{prop:elliptic}, there exists a unique pair $(v,q)\in H^2(\Om)\times H^1(\Om)$ which solves \eqref{eq:stokes_4}.
Moreover, by \eqref{est:bound}, we have
\beq
\|v\|_2^2+\|q\|_1^2\lesssim  \left\|G^1 \right\|_0^2+ \left\|G^2 \right\|_1^2+ \left\|G^3 \right\|_{1/2}^2 
+ \left\|G^4 \right\|_{3/2}^2+P \left(\|\eta\|_{5/2}^2 \right) \left(\|u\|_2^2+\|p\|_1^2 \right).
\eeq
Similarly, suppose $ \left(\tilde{u},\tilde{p} \right)\in H^2(\Om)\times H^1(\Om)$ and $(\tilde{v},\tilde{q})$ solves
\beq\label{eq:stokes_5}
\left\{
\begin{aligned}
&\dive S(\tilde{q},\tilde{v}) =R^1(\tilde{u},\tilde{p})\quad&\text{in}&\ \Om,\\
&\dive \tilde{v}=R^2(\tilde{u})\quad&\text{in}&\ \Om,\\
&\mathbb{D}\tilde{v}\nu\cdot\tau=R^3(\tilde{u})\quad &\text{on}&\ \Sigma,\\ &\tilde{v}\cdot\nu=R^4(\tilde{u})\quad&\text{on}&\ \Sigma,\\
&\tilde{v}=0\quad&\text{on}&\ \Sigma_b.
\end{aligned}
\right.
\eeq
Subtracting \eqref{eq:stokes_5} from \eqref{eq:stokes_4}, we see $v-\tilde{v}$ satisfies
\beq\label{eq:stokes_6}
\left\{
\begin{aligned}
 &\dive S \left(q-\tilde{q},v-\tilde{v} \right)= R^1(u,p)-R^1\left(\tilde{u},\tilde{p} \right)\quad&\text{in}&\ \Om,\\
 &\dive \left(v-\tilde{v} \right)=R^2(u)-R^2(\tilde{u})\quad&\text{in}&\ \Om,\\
  &\mathbb{D}(v-\tilde{v})\nu\cdot\tau=R^3(u)-R^3 \left(\tilde{u} \right)\quad&\text{on}&\ \Sigma,\\
   &\left(v-\tilde{v} \right)\cdot\nu=R^4(u)-R^4\left(\tilde{u} \right)\quad&\text{on}&\ \Sigma,\\
  &v-\tilde{v} =0\quad&\text{on}&\ \Sigma_b.
 \end{aligned}
\right.
\eeq
Then by Proposition \ref{prop:elliptic}, we obtain 
\beq
\left\|v-\tilde{v} \right\|_2^2+ \left\|q-\tilde{q} \right\|_1^2\le P \left(\|\eta\|_{5/2}^2 \right) \left(\left\|u-\tilde{u} \right\|_2^2
+ \left\|p-\tilde{p} \right\|_1^2 \right).
\eeq
We can take a small constant $\delta$ such that $P\left(\|\eta\|_{5/2}^2 \right)<\f12$, provided $\|\eta\|_{5/2}\le\delta$.
Then by Banach's fixed point Theorem (see for instance \cite{Evans}), the mapping $(u,p)\mapsto(v,q)$ has a unique fixed point.
Therefore the Stokes problem \eqref{eq:stokes_3} has a unique solution $(v,q)\in H^2(\Om)\times H^1(\Om)$ which satisfies 
\beq\label{est:elliptic_3}
\|v\|_2^2+\|q\|_1^2\lesssim \left\|G^1\right\|_0^2+ \left\|G^2 \right\|_1^2+ \left\|G^3\right\|_{1/2}^2+ \left\|G^4\right\|_{3/2}^2.
\eeq
For any $\zeta\in H^1(\Sigma)$, by \eqref{eq:surface_1}, we have
\beq
\int_\Sigma \left(\xi\zeta+\nabla_\ast\xi\cdot\nabla_\ast\zeta  \right)=\int_\Sigma q\zeta-\f1{ \left|\mathcal{N} \right|^2}\mathbb{D}_{\mathcal{A}}v\mathcal{N}\cdot\mathcal{N}\zeta-\f1{ \left|\mathcal{N} \right|^2}G^3\cdot\mathcal{N}\zeta.
\eeq
By the H\"older inequality and Sobolev inequality, we have 
\beq
\begin{aligned}
&\int_\Sigma  \left(\xi\zeta+\nabla_\ast\xi\cdot\nabla_\ast\zeta \right)\lesssim \|\xi\|_{L^2}  \|\zeta\|_{L^2}
+\left\|\nabla_\ast\xi \right\|_{L^2} \left\|\nabla_\ast\zeta \right\|_{L^2}\lesssim\|\xi\|_1^2+\|\zeta\|_1^2.
\end{aligned}
\eeq
 Then the Lax-Milgram Theorem guarantees \eqref{eq:surface_1} has a unique weak solution $\xi\in H^1(\Sigma)$.
By the regularity theory of elliptic equations and trace theory, we have
   \beq\label{est:elliptic_4}
   \begin{aligned}
\|\xi\|_{5/2}^2&\lesssim \|\xi\|_{1/2}^2+ \left\|\nabla_\ast^2\xi \right\|_{1/2}^2\lesssim \|\xi\|_{1/2}^2+ \|q\|_{H^{1/2}(\Sigma)}^2
+\|v\|_{H^{3/2}(\Sigma)}^2+ \left\|G^3 \right\|_{1/2}^2\\
&\lesssim  \left\|G^1 \right\|_0^2+ \left\|G^2 \right\|_1^2+ \left\|G^3 \right\|_{1/2}^2+ \left\|G^4 \right\|_{3/2}^2.
\end{aligned}
   \eeq
We now use a bootstrap argument to enhance the regularity of solutions for \eqref{eq:Stokes_1}.
For $j=1,2$, we apply the horizontal derivative to \eqref{eq:Stokes_1} and get
   \beq\label{eq:stokes_7}
\left\{
\begin{aligned}
  &\dive_{\mathcal{A}}S_{\mathcal{A}} \left(\pa_jq,\pa_jv \right)=G^{1,1}\ &\text{in}&\ \Om,\\
  &\dive_{\mathcal{A}}\pa_jv=G^{2,1}\ &\text{in}&\ \Om,\\
  &S_{\mathcal{A}} \left(\pa_jq,\pa_jv \right)\mathcal{N}= \pa_j\xi\mathcal{N}-\Delta_\ast\pa_j\xi\mathcal{N} 
+ G^{3,1}\ &\text{on}&\ \Sigma,\\
  &\pa_jv\cdot\mathcal{N}= G^{4,1}\ &\text{on}&\ \Sigma,\\
  &\pa_jv=0\ &\text{on}&\ \Sigma_b,
\end{aligned}
\right.
\eeq
where
\beq
\begin{aligned}
G^{1,1}&=\pa_jG^1-\nabla_{\pa_j\mathcal{A}}q+\dive_{\mathcal{A}}\mathbb{D}_{\pa_j\mathcal{A}}v+\dive_{\pa_j\mathcal{A}}\mathbb{D}_{\mathcal{A}}v,
& G^{2,1}=\pa_jG^2-\dive_{\pa_j\mathcal{A}}v,\\
  G^{3,1}&=\pa_jG^3-S_{\mathcal{A}}(q,v)\pa_j\mathcal{N}+\mathbb{D}_{\pa_j\mathcal{A}}v\mathcal{N}+\xi\pa_j\mathcal{N}-\delta_\ast^2\xi\pa_j\mathcal{N},
& G^{4,1}  =\pa_jG^4-v\pa_j\mathcal{N}.
\end{aligned}
\eeq
Then \eqref{est:elliptic_1} guarantees
\beq\label{est:stokes_2}
\left\|\pa_jv \right\|_2^2+ \left\|\pa_jq \right\|_1^2+ \left\|\pa_j\xi \right\|_{5/2}^2\lesssim \left\|G^{1,1} \right\|_0^2+ \left\|G^{2,1} \right\|_1^2+ \left\|G^{3,1} \right\|_{1/2}^2+ \left\|G^{4,1} \right\|_{3/2}^2.
\eeq
Direct calculation for the estimates of forcing terms $G^{k, 1}$, $k=1, 2, 3, 4$, by utilizing the Sobolev inequalities, the H\"older inequalities and interpolation estimate, are as follows. For $\varepsilon$ sufficiently small and any $3/2<s<2$, we have
\beq\label{est:f_1}
\begin{aligned}
\left\|G^{1,1} \right\|_0^2&\lesssim  \left\|\pa_jG^1 \right\|_0^2
+ \left\|\pa_j\mathcal{A} \right\|_{L^4}^2 \left\|\nabla q \right\|_{L^4}^2 + \left\|\mathcal{A} \right\|_{L^\infty}^2 \left( \left\|\pa_j\mathcal{A} \right\|_{L^4}^2 \left\|\nabla^2v \right\|_{L^4}^2
+ \left\|\nabla\pa_j\mathcal{A} \right\|_{L^2}^2 \left\|\nabla v \right\|_{L^\infty}^2 \right)\\
&\quad+ \left\|\pa_j\mathcal{A} \right\|_{L^4}^2 \left( \left\|\mathcal{A} \right\|_{L^\infty}^2 \left\|\nabla^2v \right\|_{L^2}^2
+ \left\|\nabla\mathcal{A} \right\|_{L^4}^2 \left\|\nabla v \right\|_{L^\infty}^2 \right)\\
&\lesssim   \left\|\pa_jG^1 \right\|_0^2+ \left\|\eta \right\|_{5/2}^2 \left(\|v\|_2^2
+ \left\|\nabla v \right\|_s^2+ \left\|\nabla q \right\|_{L^2}^{1/2}
\left\|\nabla^2 q \right\|_{L^2}^{3/2}+ \left\|\nabla^2v \right\|_{L^2}^{1/2} \left\|\nabla^3v \right\|_{L^2}^{3/2} \right)\\
&\lesssim   \left\|\pa_jG^1 \right\|_0^2+ \|\eta\|_{5/2}^2 \left(\|v\|_2^2+ \left\|\nabla^2 v \right\|_{L^2}^{2(2-s)} \left\|\nabla^3v \right\|_{L^2}^{2(s-1)}
+ \|\nabla q\|_{L^2}^{1/2} \left\|\nabla^2 q \right\|_{L^2}^{3/2}+ \left\|\nabla^2v \right\|_{L^2}^{1/2} \left\|\nabla^3v \right\|_{L^2}^{3/2} \right)\\
&\lesssim \left\|\pa_jG^1 \right\|_0^2+   \|\eta\|_{5/2}^2 \left(\|v\|_2^2+\|q\|_1^2 \right)
+\varepsilon \left( \left\|\nabla^3v \right\|_{L^2}^2+ \left\|\nabla^2 q \right\|_{L^2}^2 \right).
\end{aligned}
\eeq
Similarly, we have
\beq\label{est:f_2}
\begin{aligned}
\left\|G^{2,1} \right\|_1^2&\lesssim  \left\|\pa_jG^2 \right\|_1^2
+ \left\|\pa_j\mathcal{A} \right\|_{L^4}^2
 \left( \|\nabla v\|_{L^4}^2+ \left\|\nabla^2 v \right\|_{L^4}^2 \right)+ \left\|\nabla\pa_j\mathcal{A}\right\|_{L^2}^2 \left\|\nabla v \right\|_{L^\infty}^2\\
&\lesssim  \left\|\pa_jG^2 \right\|_1^2 +\|\eta\|_{5/2}^2 \|v\|_2^2 +\varepsilon \left\|\nabla^3v \right\|_{L^2}^2.
\end{aligned}
\eeq
Then by the trace embedding and extension theory in Sobolev spaces along with \eqref{est:bound_coeffi}, we have
\beq\label{est:f_31}
\begin{aligned}
\left\|S_{\mathcal{A}}(q,v)\pa_j\mathcal{N} \right\|_{H^{1/2}(\Sigma)}^2&\lesssim \left\|S_{\mathcal{A}}(q,v)\pa_j\nabla_\ast\bar{\eta} \right\|_{H^1(\Om)}^2\\
&\lesssim  \left( \|q\|_{s}^2+ \left\|\mathcal{A} \right\|_{L^\infty}^2 \|\nabla v\|_{s}^2 \right) \left\|\nabla\pa_j\nabla_\ast\bar{\eta} \right\|_{L^2}^2+ \left\|\pa_j\nabla_\ast\bar{\eta} \right\|_{L^4}^2 \left(\|q\|_{L^4}^2+ \|\nabla q\|_{L^4}^2 \right)\\
&\quad+ \left\|\pa_j\nabla_\ast\bar{\eta} \right\|_{L^4}^2 \left\|\mathcal{A} \right\|_{L^\infty}^2 \left(  \|\nabla v\|_{L^4}^2+ \left\|\nabla^2 v \right\|_{L^4}^2 \right)+ \left\|\pa_j\nabla_\ast\bar{\eta} \right\|_{L^6}^2 \left\|\nabla\mathcal{A} \right\|_{L^6}^2\|\nabla v\|_{L^6}^2\\
 &\lesssim \|\eta\|_{5/2}^2 \left(\|v\|_2^2+\|q\|_1^2 \right)+\varepsilon \left( \left\|\nabla^3v \right\|_{L^2}^2+ \left\|\nabla^2 q \right\|_{L^2}^2 \right),
\end{aligned}
\eeq
and
\beq\label{est:f_32}
\begin{aligned}
\left\|\mathbb{D}_{\pa_j\mathcal{A}}v\mathcal{N} \right\|_{H^{1/2}(\Sigma)}^2&\lesssim \left\|\mathbb{D}_{\pa_j\mathcal{A}}v \right\|_1^2+ \left\|\mathbb{D}_{\pa_j\mathcal{A}}v\nabla_\ast\eta \right\|_{H^{1/2}(\Sigma)}^2\\
&\lesssim \left\|\pa_j\mathcal{A} \right\|_{L^4}^2 \left( \|\nabla v\|_{L^4}^2+ \left\|\nabla^2v \right\|_{L^4}^2 \right)+ \left(1+\|\nabla_\ast\bar{\eta}\|_{L^\infty}^2 \right) \left\|\nabla\pa_j\mathcal{A} \right\|_{L^2}^2 \|\nabla v\|_{L^\infty}^2\\
&\quad+ \left\|\nabla_\ast\bar{\eta} \right\|_{L^\infty}^2 \left\|\pa_j\mathcal{A} \right\|_{L^4}^2 \left\|\nabla^2v \right\|_{L^4}^2+ \left\|\nabla\nabla_\ast\bar{\eta} \right\|_{L^6}^2 \left\|\pa_j\mathcal{A} \right\|_{L^6}^2 \left\|\nabla v \right\|_{L^6}^2\\
&\lesssim \|\eta\|_{5/2}^2 \|v\|_2^2+\varepsilon \left\|\nabla^3v \right\|_{L^2}^2.
\end{aligned}
\eeq
Combining \eqref{est:f_31} and \eqref{est:f_32}, we have
\beq\label{est:f_3}
\begin{aligned}
\left\|G^{3,1}\right\|_{1/2}^2&\lesssim  \left\|\pa_jG^3 \right\|_{1/2}^2+ \|\eta\|_{5/2}^2 \left(\|v\|_2^2+\|q\|_1^2+\|\xi\|_{7/2}^2+ \left\|G^5 \right\|_{5/2}^2 \right) +\varepsilon \left( \left\|\nabla^3v \right\|_{L^2}^2+ \left\|\nabla^2 q \right\|_{L^2}^2 \right)\\
&\lesssim  \left\|\pa_jG^3 \right\|_{1/2}^2+  \|\eta\|_{5/2}^2 \left(\|v\|_2^2+\|q\|_1^2+ \|\eta\|_{5/2}^2+ \left\|G^5 \right\|_{5/2}^2 \right) +\varepsilon \left( \left\|\nabla^3v \right\|_{L^2}^2+ \left\|\nabla^2 q \right\|_{L^2}^2+ \|\xi\|_{7/2}^2 \right).
\end{aligned}
\eeq
For the term $G^{4,1}$, we have 
\beq\label{est:f_4}
\begin{aligned}
  \left\|G^{4,1} \right\|_{3/2}^2&\lesssim  \left\|\pa_jG^4 \right\|_{3/2}^2+ \|v\|_2^2 \|\eta\|_{7/2}^2.
\end{aligned}
\eeq
Then \eqref{est:elliptic_1}, \eqref{est:stokes_2}, \eqref{est:f_2}, \eqref{est:f_3} and \eqref{est:f_4} imply for $j=1, 2$,
\beq
\begin{aligned}
& \left\|\pa_jv \right\|_2^2+ \left\| \pa_jq \right\|_1^2
+ \left\|\pa_j\xi \right\|_{5/2}^2 \\
\le &  C \left(1+\|\eta\|_{7/2}^2 \right) \left( \left\|G^1 \right\|_0^2+ \left\|G^2 \right\|_1^2
+ \left\|G^3 \right\|_{1/2}^2+ \left\|G^4 \right\|_{3/2}^2 \right)\\
& +C \left( \left\|\pa_jG^1 \right\|_0^2+ \left\| \pa_jG^2 \right\|_1^2+ \left\|\pa_jG^3 \right\|_{1/2}^2+ \left\|\pa_jG^4 \right\|_{3/2}^2 \right)
+C\varepsilon \left( \left\|\nabla^3v \right\|_{L^2}^2 + \left\|\nabla^2 q \right\|_{L^2}^2 + \|\xi\|_{7/2}^2 \right).
\end{aligned}
\eeq
Finally, we turn to the estimate of $\pa_3v$ and $\pa_3q$. From the three components of $\dive_{\mathcal{A}}S_{\mathcal{A}}(q,v)=G^1$,
we have
\[
\begin{aligned}
&  -\left(\pa_{11} v_1 + A\pa_{11} v_3  \right) - \left(\pa_{22} v_1 + A\pa_{22} v_3 \right)
 - \left(1 + A^2 + B^2 \right)K^2 \left(\pa_{33} v_1 + A\pa_{33} v_3 \right)
+2AK \left(\pa_{13} v_1 + A\pa_{13} v_3 \right)\\
& +2BK \left(\pa_{23} v_1 + A\pa_{23} v_3 \right)
- \left( AK\pa_3 (AK) + BK\pa_3 (BK)- \pa_1 (AK)-\pa_2 (BK)
+ K\pa_3 K \right) \left(\pa_3 v_1 + A\pa_3 v_3 \right)
+ \pa_1 q \\
=  &  G^1_1-\pa_1G^2 + AG^1_3,
\end{aligned}
\]
which contains the singular term $\left(1 + A^2 + B^2 \right)K^2 \left(\pa_{33} v_1 + A\pa_{33} v_3 \right)$. Applying $\pa_3$ on both sides of the above equation, the term $ \left(1 + A^2 + B^2 \right)K^2 \left(\pa_3^3 v_1 + A\pa_3^3 v_3 \right)$ implies
\beq\label{est:u_133}
\begin{aligned}
\left\|\pa_3^3 v_1 + A\pa_3^3 v_3 \right\|_0^2
& \lesssim \left( 1+ \|\eta\|_{7/2}^2 \right) \left( \left\|G^1 \right\|_0^2+ \left\|G^2 \right\|_1^2
+ \left\|G^3 \right\|_{1/2}^2+ \left\|G^4 \right\|_{3/2}^2 \right) + \left\|\nabla G^1 \right\|_0^2+ \left\|\nabla G^2 \right\|_1^2  \\
&\quad + \left\|\nabla_\ast G^3 \right\|_{1/2}^2+ \left\|\nabla_\ast G^4 \right\|_{3/2}^2
+\varepsilon \left( \left\|\nabla^3v \right\|_{L^2}^2+ \left\|\nabla^2 q \right\|_{L^2}^2+\|\xi\|_{7/2}^2 \right).
\end{aligned}
\eeq
Similarly, we have
\beq\label{est:u_233}
\begin{aligned}
\left\|\pa_3^3 v_2 + B\pa_3^3 v_3 \right\|_0^2&\lesssim  \left(1+\|\eta\|_{7/2}^2 \right) \left( \left\|G^1 \right\|_0^2+ \left\|G^2 \right\|_1^2+ \left\|G^3 \right\|_{1/2}^2+ \left\|G^4 \right\|_{3/2}^2 \right) + \left\|\nabla G^1 \right\|_0^2+ \left\|\nabla G^2 \right\|_1^2 \\
&\quad+ \left\|\nabla_\ast G^3 \right\|_{1/2}^2+ \left\|\nabla_\ast G^4 \right\|_{3/2}^2
+\varepsilon \left( \left\|\nabla^3v \right\|_{L^2}^2+ \left\|\nabla^2 q \right\|_{L^2}^2+\|\xi\|_{7/2}^2 \right).
\end{aligned}
\eeq
Since the divergence equation $\dive_{\mathcal{A}}v=G^2$ is equivalent to
\[
K \left(1 + A^2 + B^2 \right)\pa_3 v_3
= G^2-\pa_1 v_1- \pa_2 v_2 + AK \left(\pa_3 v_1 + A\pa_3 v_3 \right) + BK \left(\pa_3 v_2 + B\pa_3 v_3 \right),
\]
we can take $\pa_3^2$ on both sides of the above equation and get 
\beq \label{est:u_333}
\begin{aligned}
\left\|\pa_3^3 v_3 \right\|_0^2
& \lesssim  \left( 1 + \|\eta\|_{7/2}^2 \right) \left( \left\|G^1 \right\|_0^2+ \left\|G^2 \right\|_1^2+ \left\|G^3 \right\|_{1/2}^2
+ \left\|G^4 \right\|_{3/2}^2 \right)
+ \left\|\nabla G^1 \right\|_0^2
\\
& \quad + \left\|\nabla G^2 \right\|_1^2+ \left\|\nabla_\ast G^3 \right\|_{1/2}^2 + \left\|\nabla_\ast G^4 \right\|_{3/2}^2
+\varepsilon \left( \left\|\nabla^3v \right\|_{L^2}^2+ \left\|\nabla^2 q \right\|_{L^2}^2+\|\xi\|_{7/2}^2 \right).
\end{aligned}
\eeq
Thus, \eqref{est:u_133}--\eqref{est:u_333} imply
\beq
\begin{aligned}
 \left\|\pa_3^3 v \right\|_0^2&\lesssim  \left(1+\|\eta\|_{7/2}^2 \right) \left( \left\|G^1 \right\|_0^2+ \left\|G^2 \right\|_1^2
 + \left\|G^3 \right\|_{1/2}^2+ \left\|G^4 \right\|_{3/2}^2 \right)
 + \left\|\nabla G^1 \right\|_0^2\\
&\quad + \left\|\nabla G^2 \right\|_1^2+ \left\|\nabla_\ast G^3 \right\|_{1/2}^2
 + \left\|\nabla_\ast G^4 \right\|_{3/2}^2
+\varepsilon \left( \left\|\nabla^3v \right\|_{L^2}^2
+ \left\|\nabla^2 q \right\|_{L^2}^2+ \|\xi\|_{7/2}^2 \right).
\end{aligned}
\eeq
Then we take $\pa_3$ on both sides of the third component of $\dive_{\mathcal{A}}S_{\mathcal{A}}(q,v)=G^1$ and obtain
\beq
\begin{aligned}
\left\|\pa_3^2 q \right\|_0^2&\lesssim  \left(1+\|\eta\|_{7/2}^2 \right) \left( \left\|G^1 \right\|_0^2
+ \left\|G^2 \right\|_1^2+ \left\|G^3 \right\|_{1/2}^2+ \left\|G^4 \right\|_{3/2}^2 \right)
+ \left\|\nabla G^1 \right\|_0^2\\
&\quad + \left\|\nabla G^2 \right\|_1^2  + \left\|\nabla_\ast G^3 \right\|_{1/2}^2
+ \left\|\nabla_\ast G^4 \right\|_{3/2}^2
+\varepsilon \left( \left\|\nabla^3v \right\|_{L^2}^2+ \left\|\nabla^2 q \right\|_{L^2}^2+\|\xi\|_{7/2}^2 \right).
\end{aligned}
\eeq
Hence
\[
\begin{aligned}
\|v\|_3^2+\| q\|_2^2&\lesssim  \left(1+\|\eta\|_{7/2}^2 \right) \left( \left\|G^1 \right\|_0^2 
+ \left\|G^2 \right\|_1^2+ \left\|G^3 \right\|_{1/2}^2+ \left\|G^4 \right\|_{1/2}^2 \right) 
+ \left\|\nabla G^1 \right\|_0^2+ \left\|\nabla G^2 \right\|_1^2 \\
&\quad + \left\|\nabla_\ast G^3 \right\|_{1/2}^2+ \left\|\nabla_\ast G^4\right\|_{1/2}^2 
+\varepsilon \left( \left\|\nabla^3v \right\|_{L^2}^2+ \left\|\nabla^2 q \right\|_{L^2}^2+\|\xi\|_{7/2}^2 \right),
\end{aligned}
\]
taking $\varepsilon$ sufficiently small, we get \eqref{est:elliptic_2}.
\end{proof}
\begin{remark}
Actually, the elliptic estimates \eqref{est:elliptic_1} and \eqref{est:elliptic_2} can be generalized under the assumption
$\left\|\eta-\eta_0 \right\|_{5/2}^2\le \delta$ instead of $\|\eta\|_{5/2}^2\le \delta$,
which could be done by employing the argument that $\mathcal{A}$ is a perturbation of $\mathcal{A}(0)$.
This is crucial to deal with the local well-posedness with large initial data.
\end{remark}

\section{Energy and dissipation estimates}\label{se4}
In this section, we first establish the horizontal energy-dissipation estimates, that is Theorem \ref{thm:horizontal}. We also give elliptic estimates in Theorem \ref{thm:enhanced_dissipation}.
The two estimates are two ingredients for our \textit{a priori estimates} in the section \ref{se5}.

\subsection{The horizontal energy--dissipation estimate}
 In order to close our \textit{a priori estimates} for \eqref{eq:new_NS}, we need to consider the unknowns up to twice order derivatives, thus we have to confirm the forcing terms on the right-hand side of \eqref{eq:new_NS} with respect to derivatives of order 0, 1. Actually, if we set the horizontal energy and dissipation as
\begin{align*}
\mathcal{E}_\shortparallel & =\int_\Om  \left(|u|^2+ \left|\pa_tu \right|^2
+ \left|\nabla_\ast u \right|^2+ \left|\nabla_\ast^2 u \right|^2 \right)J+ \left\|\pa_t\eta \right\|_{1,\Sigma}^2+  \|\eta\|_{3,\Sigma}^2,
\intertext{  and }
 \mathcal{D}_\shortparallel & =\int_\Om  \left( \left|\mathbb{D}_{\mathcal{A}}u \right|^2
 +\left|\mathbb{D}_{\mathcal{A}}\pa_tu \right|^2+ \left|\mathbb{D}_{\mathcal{A}}\nabla_\ast u \right|^2
 + \left|\mathbb{D}_{\mathcal{A}}\nabla_\ast^2 u \right|^2 \right)J,
\end{align*}
we can prove the following result.
\begin{theorem}\label{thm:horizontal}
 Suppose $\left(u(t), p(t), \eta(t) \right)$ are solutions of \eqref{eq:new_ns} for $t\in [0,T]$ and $\sup\limits_{t\in[0,T]}\mathcal{E}(t)\le\delta$
for the universal constant $\delta\in(0,1)$ given in Theorem \ref{thm:elliptic}. Then, for all $t\in [0,T]$,
   \[
   \begin{aligned}
     \f{d}{dt} \left(\mathcal{E}_\shortparallel-\mathcal{P} \right)+\mathcal{D}_\shortparallel\lesssim \mathcal{E}^{1/2}\mathcal{D},
   \end{aligned}
   \]
   where
   \[
   \mathcal{P}=\int_\Om pF^2J.
   \]
 \end{theorem}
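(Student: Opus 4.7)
The plan is to apply Lemma~\ref{lem:energy} at four differentiation levels matching the four components of $\mathcal{E}_\shortparallel$: to the original system \eqref{eq:new_ns} itself, and to the systems obtained from \eqref{eq:new_ns} by the operators $\partial_t$, $\partial_j$ and $\partial_j\partial_k$ for $j,k\in\{1,2\}$. For each multi-index $\alpha$ from that list, the triple $(v,q,\zeta)=(\partial^{\alpha}u,\partial^{\alpha}p,\partial^{\alpha}\eta)$ satisfies a system of the shape \eqref{eq:new_NS} in which the forcing $(F^{1,\alpha},F^{2,\alpha},F^{3,\alpha},F^{4,\alpha})$ consists of the commutators of $\partial^{\alpha}$ with the variable-coefficient operators $\nabla_{\mathcal{A}},\dive_{\mathcal{A}},\mathbb{D}_{\mathcal{A}},\partial_t-\partial_t\bar\eta WK\partial_3,\mathcal{N},\mathcal{H}$. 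Summing the four resulting identities from Lemma~\ref{lem:energy} and using the equivalence of $\mathcal{H}^0(t),\mathcal{H}^1(t)$ with $L^2(\Om),H^1(\Om)$ guaranteed by \eqref{est:bound_coeffi} produces
\[
\tfrac12\tfrac{d}{dt}\mathcal{E}_\shortparallel+\tfrac12\mathcal{D}_\shortparallel=\sum_\alpha\Bigl[(F^{1,\alpha},\partial^\alpha u)_{\mathcal{H}^0}+(\partial^\alpha p,F^{2,\alpha})_{\mathcal{H}^0}-\int_\Sigma\partial^\alpha u\cdot F^{3,\alpha}+\int_\Sigma(\partial^\alpha\eta-\Delta_\ast\partial^\alpha\eta)F^{4,\alpha}\Bigr].
\]

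Next I would estimate the right-hand side. Each $F^{i,\alpha}$ is a finite sum of products of derivatives of $\bar\eta$ (through $\mathcal{A},J,\mathcal{N}$) with derivatives of $(u,p)$, whose total order fits inside the regularity allotted by $\mathcal{E}$ and $\mathcal{D}$. Combining H\"older's inequality, the trace inequality, the Sobolev embedding $H^{3/2+\varepsilon}(\Om)\hookrightarrow L^\infty(\Om)$, harmonic-extension bounds of the form $\|\bar\eta\|_{H^s(\Om)}\lesssim\|\eta\|_{H^{s-1/2}(\Sigma)}$, and interpolation, every contribution $(F^{1,\alpha},\partial^\alpha u)_{\mathcal{H}^0}$, $\int_\Sigma\partial^\alpha u\cdot F^{3,\alpha}$, $\int_\Sigma(\partial^\alpha\eta-\Delta_\ast\partial^\alpha\eta)F^{4,\alpha}$, and the pressure pairing $(\partial^\alpha p,F^{2,\alpha})_{\mathcal{H}^0}$ at the levels $\alpha\in\{\operatorname{id},\partial_j,\partial_j\partial_k\}$ splits as an $\mathcal{E}^{1/2}$ factor times a $\mathcal{D}^{1/2}$ factor, hence is $\lesssim\mathcal{E}^{1/2}\mathcal{D}$. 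The surface-tension pairing at $\alpha=\nabla_\ast^2$ requires integration by parts on $\Sigma$ to move derivatives off $\Delta_\ast\nabla_\ast^2\eta$ onto $F^{4,\nabla_\ast^2}$, since $\|\eta\|_{7/2}$ is in dissipation but $\|\eta\|_4$ is not.

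The main obstacle, and the origin of the correction $\mathcal{P}$, is the single pairing $(\partial_t p,F^{2,t})_{\mathcal{H}^0}$ at the $\alpha=\partial_t$ level: neither $\partial_t p$ nor the top-order piece of $F^{2,t}=-[\partial_t,\dive_{\mathcal{A}}]u$ is separately controlled in $\mathcal{E}^{1/2}$ or $\mathcal{D}^{1/2}$, so this pairing cannot be closed directly. I would handle it by integration by parts in time,
\[
\int_\Om\partial_t p\cdot F^{2,t}\,J=\frac{d}{dt}\int_\Om pF^{2,t}J-\int_\Om p\,\partial_tF^{2,t}\,J-\int_\Om pF^{2,t}\,\partial_t J,
\]
moving the first term to the left-hand side as $\frac{d}{dt}\mathcal{P}$ with $\mathcal{P}=\int_\Om pF^{2,t}J$ (this matches the statement, with $F^2$ understood as the divergence commutator of the $\partial_t$-differentiated system). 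The two remaining terms are bilinear products of quantities that are controlled by $\|p\|_{H^1}\in\mathcal{E}^{1/2}$, $\|\partial_t\eta\|_{3/2}\in\mathcal{E}^{1/2}$, $\|\partial_t^2\eta\|_{1/2}\in\mathcal{D}^{1/2}$, $\|\partial_t u\|_{H^1}\in\mathcal{D}^{1/2}$ and $\|u\|_{H^2}\in\mathcal{E}^{1/2}$, so they satisfy the desired bound $\mathcal{E}^{1/2}\mathcal{D}$. Assembling everything yields $\frac{d}{dt}(\mathcal{E}_\shortparallel-\mathcal{P})+\mathcal{D}_\shortparallel\lesssim\mathcal{E}^{1/2}\mathcal{D}$, as asserted.
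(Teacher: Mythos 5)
Your proposal follows essentially the same route as the paper's proof of Theorem~\ref{thm:horizontal}: apply Lemma~\ref{lem:energy} at the four differentiation levels $\mathrm{id},\partial_t,\partial_j,\partial_i\partial_j$, estimate each commutator pairing by H\"older/Sobolev/trace/interpolation (the content of Propositions~\ref{prop:horizon_1} and~\ref{prop:horizon_3}), and handle the uncontrolled $(\partial_t p,F^{2})_{\mathcal{H}^0}$ pairing by integrating by parts in time to produce the correction $\mathcal{P}$, which is precisely Proposition~\ref{prop:horizon_2}. The only cosmetic deviation is that for the fourth-order $\eta$ terms on $\Sigma$ the paper pairs directly in $H^{-1/2}\times H^{1/2}$ rather than integrating by parts, but this is the same trick in a different guise.
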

 \begin{proof}
    Based on Lemma \ref{lem:energy}, we are supposed to take $(v, q, \xi)=(u, p,\eta)$, $(\pa_tu, \pa_tp, \pa_t\eta)$, $(\nabla_\ast u, \nabla_\ast p, \nabla_\ast\eta)$, and $(\nabla_\ast^2u, \nabla_\ast^2p, \nabla_\ast^2\eta)$, respectively, and estimate the corresponding forcing terms in the right-hand side of \eqref{eq:energy_1}. The details of the proof is completely  contained in Propositions \ref{prop:horizon_1}--\ref{prop:horizon_3}.
 \end{proof}

\subsubsection{The estimates for one temporal derivative}

  When $(v,q,\xi)=(\pa_tu,\pa_tp,\pa_t\eta)$,
  \begin{align*}
F^1&=-\dive_{\pa_t\mathcal{A}}S_{\mathcal{A}}(u,p)
  +\dive_{\mathcal{A}}\mathbb{D}_{\pa_t\mathcal{A}}u-\pa_tu\cdot\nabla_{\mathcal{A}}u -u\cdot\nabla_{\pa_t\mathcal{A}}u
  +\pa_t^2\bar{\eta}KW\pa_3u+\pa_t\bar{\eta}\pa_tKW\pa_3u,\\
F^2&=-\dive_{\pa_t\mathcal{A}}u, \\
F^4 &  =u\cdot\pa_t\mathcal{N},\\
F^3& = \eta\pa_t\mathcal{N}-\Delta_\ast\eta\pa_t\mathcal{N}+\mathbb{D}_{\pa_t\mathcal{A}}u\mathcal{N}- S_{\mathcal{A}}(p,u)\pa_t\mathcal{N}
-\frac{ 3 \sum\limits_{i,j=1}^2 \pa_i\eta\pa_j\eta\pa_{ij}\eta
\left(\nabla_\ast\eta\cdot\nabla_\ast\pa_t\eta \right)}{ \left(1+ \left|\nabla_\ast\eta \right|^2 \right)^{5/2}}\mathcal{N}  \\
&\quad+\frac{ \left|\nabla_\ast\eta \right|^2\Delta_\ast\eta}{\sqrt{1+ \left|\nabla_\ast\eta \right|^2} \left(1+\sqrt{1+ \left|\nabla_\ast\eta \right|^2} \right)}\pa_t\mathcal{N}
  +\frac{\sum\limits_{i,j=1}^2\pa_i\eta\pa_j\eta\pa_{ij}\eta}{ \left(1+ \left|\nabla_\ast\eta \right|^2 \right)^{3/2}}\pa_t\mathcal{N}
 \\
&\quad
+\frac{ \left|\nabla_\ast\eta \right|^2
  \Delta_\ast\eta \left(\nabla_\ast\eta\cdot\nabla_\ast\pa_t\eta \right)}{ \left(1+ \left|\nabla_\ast\eta \right|^2 \right)^{3/2} \left(1+\sqrt{1+ \left|\nabla_\ast\eta \right|^2} \right)^2}\mathcal{N}
+\frac{ \left|\nabla_\ast\eta \right|^2\Delta_\ast\pa_t\eta
  + \left(\nabla_\ast\eta\cdot\nabla_\ast\pa_t\eta \right)
  \Delta_\ast\eta}{\sqrt{1+ \left|\nabla_\ast\eta \right|^2} \left(1+\sqrt{1+ \left|\nabla_\ast\eta \right|^2} \right)}\mathcal{N} \\
& \quad   +\frac{\sum\limits_{i,j=1}^2\pa_i\eta\pa_j\eta\pa_{ij}\pa_t\eta}{ \left(1+ \left|\nabla_\ast\eta \right|^2 \right)^{3/2}}\mathcal{N} +\frac{2\sum\limits_{i,j=1}^2\pa_i\pa_t\eta\pa_j\eta\pa_{ij}\eta}{ \left(1+ \left|\nabla_\ast\eta \right|^2 \right)^{3/2}}\mathcal{N}
.
  \end{align*}
We now give the estimates of the interaction between forcing terms and test functions in \eqref{eq:energy_1}.
\begin{proposition}\label{prop:horizon_1}
 \[
 \left\|F^1J \right\|_{H^0}^2+ \left\|F^3 \right\|_{H^0}^2+ \left\|F^4 \right\|_{H^0}^2\lesssim  \left(\mathcal{E}+\mathcal{E}^2 \right)\mathcal{D}.
 \]
\end{proposition}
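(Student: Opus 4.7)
Each of $F^1$, $F^3$, $F^4$ is a finite sum of multilinear expressions in the derivatives of $(u,p,\eta)$ and in coefficients built from $\mathcal{A}, \mathcal{N}, J, K, W$. The plan is to bound every summand in $L^2$ by H\"older's inequality, routing the single factor of highest regularity (the one counted in $\mathcal{D}$) into the smaller Lebesgue exponent and placing the remaining coefficient or lower-order factors into the complementary exponent via 3D Sobolev embeddings on $\Om$ or 2D embeddings on $\Sigma$, so as to be absorbed into $\mathcal{E}$. Under the smallness hypothesis $\mathcal{E}\le\delta$, estimate \eqref{est:bound_coeffi} gives $\|\mathcal{A}\|_{L^\infty}+\|\mathcal{N}\|_{L^\infty}+\|J^{\pm 1}\|_{L^\infty}\lesssim 1$, and the harmonic-extension inequality $\|\pa_t^{\ell}\bar\eta\|_{H^{k+1/2}(\Om)}\lesssim \|\pa_t^{\ell}\eta\|_{H^k(\Sigma)}$ (with its $L^q$-stability counterpart $\|\bar f\|_{L^q(\Om)}\lesssim \|f\|_{L^q(\Sigma)}$) converts every coefficient derivative into a boundary norm of $\eta$ or $\pa_t\eta$.

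\textbf{Estimate of $F^1$.} The divergence pieces $\dive_{\pa_t\mathcal{A}}S_{\mathcal{A}}(u,p)$ and $\dive_{\mathcal{A}}\mathbb{D}_{\pa_t\mathcal{A}}u$ split schematically into products of $\pa_t\mathcal{A}$ or $\nabla\pa_t\mathcal{A}$ with $(\nabla p,\nabla^2 u,\nabla u)$; placing the coefficient factor in $L^\infty$ or $L^4$ and bounding it by $\|\pa_t\eta\|_{H^{3/2}}+\|\eta\|_{H^3}$ while keeping the principal factor in $L^2$ or an interpolated $L^4$ norm yields $\lesssim \mathcal{E}\mathcal{D}$. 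The convective corrections are handled by an $L^3$--$L^6$ split, e.g.\ $\|\pa_tu\cdot\nabla_{\mathcal{A}}u\|_{L^2}^2\lesssim \|\pa_tu\|_{L^3}^2\|\nabla u\|_{L^6}^2 \lesssim \|\pa_tu\|_0\|\pa_tu\|_1\|u\|_2^2 \lesssim \mathcal{E}^{3/2}\mathcal{D}^{1/2}$. The delicate term is $\pa_t^2\bar\eta\,WK\,\pa_3 u$, because $\pa_t^2\eta$ lives only in $H^{1/2}(\Sigma)\subset\mathcal{D}$, so a naive $L^2$--$L^\infty$ split would give $\mathcal{D}^2$. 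The resolution is to use the critical 2D embedding $H^{1/2}(\Sigma)\hookrightarrow L^4(\Sigma)$ together with the $L^4$-stability of the harmonic extension, giving $\|\pa_t^2\bar\eta\|_{L^4(\Om)}\lesssim \|\pa_t^2\eta\|_{H^{1/2}(\Sigma)}$, and to pair this with $\|\pa_3u\|_{L^4(\Om)}\lesssim \|u\|_{H^{7/4}}\lesssim \|u\|_{H^2}$, obtaining $\|\pa_t^2\bar\eta\,WK\,\pa_3u\|_{L^2}^2\lesssim \mathcal{D}\cdot\mathcal{E}$. The remaining cubic summand $\pa_t\bar\eta\,\pa_tK\,W\pa_3u$ is absorbed into $\mathcal{E}^2\mathcal{D}$ in the same manner.

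\textbf{Estimates of $F^3$ and $F^4$.} The curvature contributions to $F^3$ are rational polynomials in $\nabla_\ast\eta$ with denominator bounded below by $1$, multiplied by one of $\eta$, $\Delta_\ast\eta$, $\nabla_\ast\pa_t\eta$, or $\pa_{ij}\pa_t\eta$. I would place the highest-order factor in $L^2(\Sigma)$, controlled by $\|\eta\|_{H^3}\lesssim\mathcal{E}^{1/2}$ or $\|\pa_t\eta\|_{H^{5/2}}\lesssim \mathcal{D}^{1/2}$, and the remaining factors in $L^\infty(\Sigma)$ via $H^{3/2}(\Sigma)\hookrightarrow L^\infty(\Sigma)$ applied to $\eta$ or $\pa_t\eta$, producing $\mathcal{E}\mathcal{D}$ or $\mathcal{E}^2\mathcal{D}$ after squaring. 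The algebraic contributions $\mathbb{D}_{\pa_t\mathcal{A}}u\,\mathcal{N}$ and $S_{\mathcal{A}}(p,u)\pa_t\mathcal{N}$ combine the trace bounds $\|p\|_{H^{1/2}(\Sigma)}+\|\nabla u\|_{H^{1/2}(\Sigma)}\lesssim \mathcal{E}^{1/2}$ with $L^\infty(\Sigma)$ bounds on $\pa_t\mathcal{A}, \pa_t\mathcal{N}$ coming from $\|\pa_t\eta\|_{H^{3/2}}$. For $F^4=u\cdot\pa_t\mathcal{N}$, an $L^4(\Sigma)$--$L^4(\Sigma)$ H\"older split, using $u|_\Sigma\in H^1(\Sigma)\hookrightarrow L^4(\Sigma)$ and $\nabla_\ast\pa_t\eta\in H^{1/2}(\Sigma)\hookrightarrow L^4(\Sigma)$, yields $\|F^4\|_{L^2(\Sigma)}^2\lesssim \mathcal{E}\mathcal{D}$.

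\textbf{Main obstacle.} The only genuinely delicate point is the pairing of the top time-derivative $\pa_t^2\eta$ (available only in $\mathcal{D}$) against $\pa_3u$ in $F^1$, and the analogous pairings in $F^3$ in which $\nabla_\ast^2\pa_t\eta$ meets factors of $\nabla_\ast\eta$. In both cases the resolution is the critical embedding $H^{1/2}(\Sigma)\hookrightarrow L^4(\Sigma)$ combined with the $L^q$-stability of the harmonic extension, which extracts a power of $\mathcal{E}$ from the accompanying factors while leaving exactly one power of $\mathcal{D}$. Collecting all the summands then produces the claimed bound $(\mathcal{E}+\mathcal{E}^2)\mathcal{D}$.
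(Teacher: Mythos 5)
Your proposal is correct and follows essentially the same route as the paper: term-by-term H\"older splits with Sobolev embeddings on $\Om$ and $\Sigma$, routing one factor into $\mathcal{D}$ and the rest into $\mathcal{E}$, with the only delicate spot being $\pa_t^2\bar\eta\,KW\pa_3u$ handled via an $L^4$--$L^4$ split; the paper's own proof of Proposition~\ref{prop:horizon_1} bounds exactly this term as $\|\pa_t^2\bar\eta\|_{L^4}^2\|\nabla u\|_{L^4}^2\lesssim\|\pa_t^2\eta\|_{1/2}^2\|u\|_2^2$, which is your critical-embedding argument. The minor differences (your $L^3$--$L^6$ split for $\pa_tu\cdot\nabla_{\mathcal{A}}u$ versus the paper's $L^4$--$L^4$, and your appeal to $L^q$-stability of the Poisson extension versus the paper's implicit $H^{s+1/2}$-gain plus Sobolev) are cosmetic and do not change the argument.
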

 \begin{proof}
By the H\"older inequalities, the Sobolev inequality and trace theory along with \eqref{est:bound_coeffi}, we have 
\[
\begin{aligned}
\left\|F^1J \right\|_{H^0}^2&\lesssim\int_{\Om} \left|\dive_{\pa_t\mathcal{A}}S_{\mathcal{A}}(u,p) \right|^2+ \left|\dive_{\mathcal{A}}\mathbb{D}_{\pa_t\mathcal{A}}u \right|^2
+ \left|\pa_tu\cdot\nabla_{\mathcal{A}}u \right|^2+ \left|u\cdot\nabla_{\pa_t\mathcal{A}}u \right|^2\\
  &\quad+ \left|\pa_t^2\bar{\eta}KW\pa_3u \right|^2+ \left|\pa_t\bar{\eta}\pa_tKW\pa_3u \right|^2\\
  &\lesssim \left\|\nabla\pa_t\bar{\eta} \right\|_{L^4}^2 \left( \|\nabla p\|_{L^4}^2
  + \left\|\nabla^2u \right\|_{L^4}^2 \right)
+ \left\|\nabla\pa_t\bar{\eta} \right\|_{L^6}^2 \left\|\nabla^2\bar{\eta} \right\|_{L^6}^2 \|\nabla u\|_{L^6}^2
+ \left\|\nabla^2\pa_t\bar{\eta} \right\|_{L^4}^2  \|\nabla u\|_{L^4}^2 \\
  &\quad+ \left\|\pa_tu \right\|_{L^4}^2 \|\nabla u\|_{L^4}^2+
  \|u\|_{L^6}^2\|\nabla u\|_{L^6}^2 \left\|\nabla\pa_t\bar{\eta} \right\|_{L^6}^2
+ \left\|\pa_t^2\bar{\eta} \right\|_{L^4}^2 \|\nabla u\|_{L^4}^2
+ \left\|\pa_t\bar{\eta} \right\|_{L^6}^2 \left\|\nabla\pa_t\bar{\eta} \right\|_{L^6}^2 \|\nabla u\|_{L^6}^2\\
  &\lesssim  \left\|\pa_t\eta \right\|_{3/2}^2 \left(\|p\|_2^2+\|u\|_3^2 \right)+ \left( \left\|\pa_t\eta \right\|_{5/2}^2
+ \|\pa_tu\|_1^2+ \left\|\pa_t^2\eta \right\|_{1/2}^2 \right)\|u\|_2^2
+\|u\|_2^4 \left\|\pa_t\eta \right\|_{3/2}^2+\|u\|_2^2 \left\|\pa_t\eta \right\|_{3/2}^4\\
& \lesssim \left(\mathcal{E}+\mathcal{E}^2 \right)\mathcal{D}.
\end{aligned}
   \]
By \eqref{est:bound_coeffi} and the Sobolev inequalities, we have 
   \[
   \begin{aligned}
\left\|F^3 \right\|_0^2&\lesssim
 \left\| \left(\eta-\Delta_\ast\eta+p \right)\nabla\pa_t\eta \right\|_0^2
 + \left\|\nabla\pa_t\bar{\eta}\nabla u \right\|_{H^0(\Sigma)}^2
 + \left\|\left|\nabla_\ast\eta \right|^2\nabla_\ast\pa_t\eta \right\|_0^2\\
   &\lesssim \left\|\eta \right\|_{7/2}^2 \left\|\pa_t\eta \right\|_1^2
+ \left(\|p\|_1^2+\|\eta\|_3^2 \right) \left\|\pa_t\eta \right\|_{5/2}^2+\|u\|_0^2 \|\pa_t\eta\|_{3/2}^2
\lesssim \mathcal{E}\mathcal{D}.
   \end{aligned}
   \]
Finally, by the Sobolev inequalities, we have 
\[
    \left\|F^4\right\|_0^2\lesssim \left\|u\nabla\pa_t\eta \right\|_{H^0(\Sigma)}^2\lesssim \|u\|_2^2 \left\|\pa_t\eta \right\|_1^2\lesssim\mathcal{E}\mathcal{D}.   \]
\end{proof}
We now consider the estimate of the pressure term. Since we do not have the estimate for $\pa_tp$, so we turn to the following argument.
 \begin{proposition}\label{prop:horizon_2}
   \[
   \int_\Om \pa_tp F^2J- \f{d}{dt}\int_\Om pF^2J\lesssim \left(\mathcal{E}^{1/2}+\mathcal{E} \right)\mathcal{D}.
   \]
 \end{proposition}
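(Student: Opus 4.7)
The plan is to integrate by parts in time, trading the uncontrolled $\pa_t p$ for time derivatives of the coefficients. Writing
\[
\int_\Om \pa_tp \cdot F^2 J = \f{d}{dt}\int_\Om p F^2 J - \int_\Om p \cdot \pa_t(F^2 J),
\]
the left-hand side of the proposition collapses to $-\int_\Om p \pa_t(F^2 J)$, so it suffices to bound this integral by $(\mathcal{E}^{1/2}+\mathcal{E})\mathcal{D}$. Since $F^2 = -\dive_{\pa_t\mathcal{A}}u$, the product rule applied to $F^2 J$ produces three structurally different integrands: one containing $\pa_t^2\mathcal{A}\cdot\nabla u \cdot J$, one containing $\pa_t\mathcal{A}\cdot\nabla\pa_t u\cdot J$, and one containing $\pa_t\mathcal{A}\cdot\nabla u\cdot\pa_t J$.

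The key observation for all three is that, although only $\|p\|_1$ sits in $\mathcal{E}$, the Sobolev embedding $H^2(\Om)\hookrightarrow L^\infty(\Om)$ gives $\|p\|_{L^\infty}\lesssim\|p\|_2\lesssim\mathcal{D}^{1/2}$; this dissipative factor is placed on the pressure in every estimate. The remaining coefficient factors are handled via the standard harmonic-extension inequality $\|\pa_t^k\bar\eta\|_{H^{s+1/2}(\Om)}\lesssim\|\pa_t^k\eta\|_{H^s(\Sigma)}$ together with Sobolev embeddings. For the first integrand, the linear part of $\pa_t^2\mathcal{A}$ in $\pa_t^2\bar\eta$ satisfies $\|\pa_t^2\mathcal{A}\|_{L^2}\lesssim\|\pa_t^2\eta\|_{1/2}\lesssim\mathcal{D}^{1/2}$, yielding $\|p\|_{L^\infty}\|\pa_t^2\mathcal{A}\|_{L^2}\|\nabla u\|_{L^2}\lesssim\mathcal{E}^{1/2}\mathcal{D}$, while the pieces quadratic in $\pa_t\bar\eta$ are estimated with the interpolation $\|\pa_t\bar\eta\|_{L^\infty(\Om)}^2 \lesssim \|\pa_t\eta\|_{3/2}\|\pa_t\eta\|_{5/2}\lesssim\mathcal{E}^{1/2}\mathcal{D}^{1/2}$, giving $\mathcal{E}\mathcal{D}$. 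The second integrand is bounded by $\|p\|_{L^\infty}\|\pa_t\mathcal{A}\|_{L^2}\|\nabla\pa_t u\|_{L^2}\lesssim\mathcal{E}^{1/2}\mathcal{D}$ using $\|\pa_t u\|_1\lesssim\mathcal{D}^{1/2}$. The third integrand uses the 3D embedding $H^3\hookrightarrow W^{1,\infty}$ to extract $\|\nabla u\|_{L^\infty}\lesssim\|u\|_3\lesssim\mathcal{D}^{1/2}$, producing $\|p\|_{L^\infty}\|\pa_t\mathcal{A}\|_{L^2}\|\nabla u\|_{L^\infty}\|\pa_t J\|_{L^2}\lesssim\mathcal{E}\mathcal{D}$.

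The main obstacle is the $\pa_t^2\mathcal{A}$ contribution, which drags in the top-order time derivative $\pa_t^2\eta$: this quantity lies only in the dissipation through $\|\pa_t^2\eta\|_{1/2}\lesssim\mathcal{D}^{1/2}$, and never in the energy. Closing the estimate at the required level therefore hinges on simultaneously extracting a second $\mathcal{D}^{1/2}$ from the pressure by the $H^2\hookrightarrow L^\infty$ embedding, which is precisely why $\|p\|_2$ and $\|\pa_t^2\eta\|_{1/2}$ are designed to appear in $\mathcal{D}$ as they do.
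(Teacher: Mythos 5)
Your proof is correct and mirrors the paper's reasoning step for step: the same integration by parts in time, the same three-term decomposition of $\pa_t(F^2 J)$, and the recognition that the $\pa_t^2\bar\eta$ contribution carries the top-order time derivative, which is available only through $\|\pa_t^2\eta\|_{1/2}\lesssim\mathcal{D}^{1/2}$. The only real difference is the Lebesgue--Sobolev split on the pressure factor: you take $\|p\|_{L^\infty(\Om)}\lesssim\|p\|_{2}\lesssim\mathcal{D}^{1/2}$, whereas the paper takes $\|p\|_{L^4(\Om)}\lesssim\|p\|_{1}\lesssim\mathcal{E}^{1/2}$ and then silently uses the structural inequality $\mathcal{E}\lesssim\mathcal{D}$ (which holds here since each norm entering $\mathcal{E}$ is dominated by a higher-order norm entering $\mathcal{D}$). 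Both splits land inside the target $(\mathcal{E}^{1/2}+\mathcal{E})\mathcal{D}$, so your closing remark that the estimate ``hinges'' on extracting a dissipation factor from $p$ via $H^2\hookrightarrow L^\infty$ overstates a stylistic choice as a necessity. One small imprecision worth fixing: the quadratic piece of $\pa_t^2\mathcal{A}$ involves $\left|\nabla\pa_t\bar\eta\right|^2$ rather than $\left|\pa_t\bar\eta\right|^2$, so the clean route there is $\left\|\nabla\pa_t\bar\eta\right\|_{L^4}^2\lesssim\left\|\pa_t\eta\right\|_{3/2}^2\lesssim\mathcal{E}$; the term still closes as $\mathcal{E}\mathcal{D}$.
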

 \begin{proof}
 We rewrite the integration as
   \[
   \int_\Om \pa_tp F^2 J=\f{d}{dt}\int_\Om pF^2J-\int_\Om p\pa_tF^2J-\int_\Om pF^2\pa_tJ.
   \]
Direct calculation yields 
   \[
   \begin{aligned}
    & \quad   \left|\int_\Om p\pa_tF^2J +\int_\Om pF^2\pa_tJ\right| \\
&\lesssim\int_\Om|p| \left( \left|\nabla\pa_t^2\bar{\eta} \right| \left|\nabla u \right|
+ \left|\nabla\pa_t\bar{\eta} \right| \left|\nabla\pa_tu \right|+ \left|\nabla\pa_t\bar{\eta} \right|^2 \left|\nabla u \right| \right)\\
&\lesssim \|p\|_{L^4} \left( \left\|\nabla\pa_t^2\bar{\eta} \right\|_{L^2} \|\nabla u\|_{L^4}
+ \left\|\nabla\pa_t\bar{\eta} \right\|_{L^4} \|\nabla\pa_tu\|_{L^2} \right)
+\|p\|_{L^4} \left\|\nabla\pa_t\bar{\eta} \right\|_{L^4}^2\|\nabla u\|_{L^4}\\
     &\lesssim \|p\|_1^2 \left( \left\|\pa_t^2\eta \right\|_{1/2}\|u\|_2
+ \left\|\pa_t\eta \right\|_{3/2} \|\pa_tu\|_1+ \|\pa_t\eta\|_{3/2}^2\|u\|_2 \right)
\lesssim  \left(\mathcal{E}^{1/2}+\mathcal{E} \right)\mathcal{D}.
   \end{aligned}
   \]
 \end{proof}
Hence, we have
 \[
 \begin{aligned}
 \f{d}{dt}\int_\Om\left( \left|\pa_tu \right|^2J+p \left(\dive_{\pa_t\mathcal{A}}u \right)J\right)
+\f{d}{dt} \left\|\pa_t\eta \right\|_{H^1(\Sigma)}^2\int_\Om \left|\mathbb{D}_{\mathcal{A}}\pa_tu \right|^2J
\lesssim  \left(\mathcal{E}^{1/2}+\mathcal{E} \right)\mathcal{D}.
\end{aligned}
  \]
 Similarly, we also have
 \[
 \begin{aligned}
 \f{d}{dt}\int_\Om|u|^2J+\f{d}{dt} \|\eta\|_{H^1(\Sigma)}^2+\int_\Om \left|\mathbb{D}_{\mathcal{A}}u \right|^2J\lesssim \left(\mathcal{E}^{1/2}+\mathcal{E} \right)\mathcal{D}.
 \end{aligned}
  \]

\subsubsection{The estimates for spatial derivatives}
We now give the explicit forms of forcing terms in \eqref{eq:energy_1} when we take spatial derivatives on $ \left(u,p,\eta \right)$. We will use Einstein summation for the sake of simplicity. When $(v,q,\zeta)= \left(\pa_ju,\pa_jp,\pa_j\eta \right)$, $ j=1, 2$, we have
\[
\begin{aligned}
    F^1&=\pa_j\pa_t\bar{\eta}KW\pa_3u+\pa_t\bar{\eta}\pa_jKW\pa_3u-u\cdot\nabla_{\pa_j\mathcal{A}}u-\pa_j u\cdot\nabla_{\mathcal{A}}u-\nabla_{\pa_j\mathcal{A}}p
+\dive_{\pa_j\mathcal{A}}\mathbb{D}_{\mathcal{A}}u+\dive_{\mathcal{A}}\mathbb{D}_{\pa_j\mathcal{A}}u,\\
  F^2& =-\dive_{\pa_j\mathcal{A}}u, \\
 F^4 &  =-u\cdot\pa_j\mathcal{N},\\
F^3&= \left(\eta-\Delta_\ast\eta \right)\pa_j\mathcal{N}+\mathbb{D}_{\pa_j\mathcal{A}}u\mathcal{N}-S_{\mathcal{A}}(p,u)\pa_j\mathcal{N}
+\frac{ \left|\nabla_\ast\eta \right|^2\Delta_\ast\eta}{\sqrt{1+ \left|\nabla_\ast\eta \right|^2} \left(1+\sqrt{1+ \left|\nabla_\ast\eta \right|^2} \right)}\pa_j\mathcal{N}
\\
  &\quad
  +\frac{\sum_{k,\ell=1}^2\pa_k\eta\pa_\ell\eta\pa_{k\ell}\eta}{ \left(1+ \left|\nabla_\ast\eta \right|^2 \right)^{3/2}}\pa_j\mathcal{N}+\frac{ \left|\nabla_\ast\eta \right|^2\Delta_\ast\pa_j\eta
+ \left(\nabla_\ast\eta\cdot\nabla_\ast\pa_j\eta \right)\Delta_\ast\eta}{\sqrt{1+ \left|\nabla_\ast\eta \right|^2}
 \left(1+\sqrt{1+ \left|\nabla_\ast\eta \right|^2}\right)}\mathcal{N}
 +\frac{2\sum_{k,\ell=1}^2\pa_k\pa_j\eta\pa_\ell\eta\pa_{k\ell}\eta}{\left(1+ \left|\nabla_\ast\eta \right|^2 \right)^{3/2}}\mathcal{N}\\
  &\quad+\frac{ \left|\nabla_\ast\eta \right|^2
\Delta_\ast\eta(\nabla_\ast\eta\cdot\nabla_\ast\pa_j\eta)}{ \left(1+ \left|\nabla_\ast\eta \right|^2 \right)^{3/2} \left(1+\sqrt{1+ \left|\nabla_\ast\eta \right|^2} \right)}
  \mathcal{N}+\frac{\sum_{k,\ell=1}^2\pa_k\eta\pa_\ell\eta\pa_{k\ell}\pa_j\eta}{ \left(1+ \left|\nabla_\ast\eta \right|^2 \right)^{3/2}}\mathcal{N}\\
&+\frac{ \left|\nabla_\ast\eta \right|^2\Delta_\ast\eta \left(\nabla_\ast\eta\cdot\nabla_\ast\pa_j\eta \right)}{ \left(1+ \left|\nabla_\ast\eta \right|^2 \right)\left(1+\sqrt{1+ \left|\nabla_\ast\eta \right|^2} \right)^2}\mathcal{N}
-3\frac{\sum_{k,\ell=1}^2\pa_k\eta\pa_\ell\eta\pa_{k\ell}\eta \left(\nabla_\ast\eta\cdot\nabla_\ast\pa_j\eta \right)}
{\left(1+ \left|\nabla_\ast\eta \right|^2 \right)^{5/2}}\mathcal{N}. 
\end{aligned}
 \]
 We also need to take horizontal derivatives on $(u,p,\eta)$ twice in our definitions of horizontal energy and dissipation.
Taking $(v, q, \zeta)=(\pa_{ij}u, \pa_{ij}p,\pa_{ij}\eta)$, $i,j=1,2$, we have 
  \[
  \begin{aligned}
F^1&=\pa_{ij}\pa_t\bar{\eta}KW\pa_3u+\pa_j\pa_t\bar{\eta}\pa_iKW\pa_3u+\pa_j\pa_t\bar{\eta}KW\pa_{i3}u+\pa_i\pa_t\bar{\eta}\pa_jKW\pa_3u
+\pa_t\bar{\eta}\pa_{ij}KW\pa_3u
+\pa_t\bar{\eta}\pa_jKW\pa_{3i}u\\
&\quad
-\pa_iu\cdot\nabla_{\pa_j\mathcal{A}}u-u\cdot\nabla_{\pa_{ij}\mathcal{A}}u-u\cdot\nabla_{\pa_j\mathcal{A}}\pa_iu-\pa_{ij} u\cdot\nabla_{\mathcal{A}}u
-\pa_ju\cdot\nabla_{\pa_i\mathcal{A}}u
-\pa_ju\cdot\nabla_{\mathcal{A}}\pa_i u +\dive_{\mathcal{A}}\mathbb{D}_{\pa_{ij}\mathcal{A}}u
\\
    &\quad
-\nabla_{\pa_j\mathcal{A}}\pa_ip+\dive_{\pa_{ij}\mathcal{A}}\mathbb{D}_{\mathcal{A}}u
    +\dive_{\pa_j\mathcal{A}}\mathbb{D}_{\pa_i\mathcal{A}}u
    +\dive_{\pa_j\mathcal{A}}\mathbb{D}_{\mathcal{A}}\pa_i u
 +\dive_{\pa_i\mathcal{A}}\mathbb{D}_{\pa_j\mathcal{A}}u +\dive_{\mathcal{A}}\mathbb{D}_{\pa_j\mathcal{A}}\pa_i u
 -\nabla_{\pa_{ij}\mathcal{A}}p
 \\
&\quad +\pa_t\bar{\eta}\pa_iKW\pa_{3j}u+\pa_i\pa_t\bar{\eta}KW\pa_{3j}u
-\pa_iu\cdot\nabla_{\mathcal{A}}\pa_j u-u\cdot\nabla_{\pa_i\mathcal{A}}\pa_j u-\dive_{\pa_i\mathcal{A}}S_{\mathcal{A}}(\pa_jp,\pa_ju)+\dive_{\mathcal{A}}\mathbb{D}_{\pa_i\mathcal{A}}\pa_ju,\\
 F^2&=-\dive_{\pa_{ij}\mathcal{A}}u-\dive_{\pa_j\mathcal{A}}\pa_iu-\dive_{\pa_i\mathcal{A}}\pa_ju,
\\
  F^4 &   = -\pa_iu\cdot\pa_j\mathcal{N}-u\cdot\pa_{ij}\mathcal{N}-\pa_ju\cdot\pa_i\mathcal{N},\\
\end{aligned}
 \]
 \[
 \begin{aligned}
    F^3& = \left(\pa_i\eta-\Delta_\ast\pa_i\eta \right)\pa_j\mathcal{N}
+\left(\eta-\Delta_\ast\eta \right)\pa_{ij}\mathcal{N}
+\mathbb{D}_{\pa_{ij}\mathcal{A}}u\mathcal{N}
+\mathbb{D}_{\pa_j\mathcal{A}}\pa_iu\mathcal{N}
+\mathbb{D}_{\pa_j\mathcal{A}}u\pa_i\mathcal{N}-S_{\mathcal{A}} \left(\pa_ip,\pa_iu \right)\pa_j\mathcal{N}
    \\
&\quad
+2\mathbb{D}_{\pa_i\mathcal{A}}\pa_ju\mathcal{N}-S_{\mathcal{A}}\left(\pa_jp,\pa_ju \right)\pa_i\mathcal{N}
-S_{\mathcal{A}}(p,u)\pa_{ji}\mathcal{N}
+\frac{2 \left|\nabla_\ast\eta \right|^2\Delta_\ast\pa_i\eta}{\sqrt{1+ \left|\nabla_\ast\eta \right|^2}
  \left(1+\sqrt{1+ \left|\nabla_\ast\eta \right|^2} \right)}\pa_j\mathcal{N}
  \\
  &\quad
  +\frac{3 \left(\nabla_\ast\eta\cdot\nabla_\ast\pa_i\eta \right)\Delta_\ast\eta}{\sqrt{1+ \left|\nabla_\ast\eta \right|^2} \left(1+\sqrt{1+ \left|\nabla_\ast\eta \right|^2} \right)}\pa_j\mathcal{N}
  +\frac{ \left|\nabla_\ast\eta \right|^2\Delta_\ast\pa_i\eta}{\sqrt{1+ \left|\nabla_\ast\eta \right|^2}
  \left(1+\sqrt{1+ \left|\nabla_\ast\eta \right|^2} \right)}\pa_{ij}\mathcal{N}
  \\
  &\quad
+\frac{\sum_{k,\ell=1}^2\pa_k\eta\pa_\ell\eta\pa_{k\ell}\eta}{ \left(1+ \left|\nabla_\ast\eta \right|^2 \right)^{3/2}}\pa_{ij}\mathcal{N}
 +  \frac{ 2 \pa_k\eta\pa_\ell\eta\pa_{ik\ell}\eta}{ \left(1+ \left|\nabla_\ast\eta \right|^2 \right)^{3/2}}\pa_j\mathcal{N}
-\frac{ 6 \pa_k\eta\pa_\ell\eta\pa_{k\ell}\eta \left(\nabla_\ast\eta\cdot\nabla_\ast\pa_i\eta \right)}
{ \left(1+|\nabla_\ast\eta|^2 \right)^{5/2}}\pa_j\mathcal{N}
\\
  &\quad
+3\frac{\pa_i \left(\pa_k\eta\pa_\ell\eta \right)\pa_{k\ell}\eta}{ \left(1+ \left|\nabla_\ast\eta \right|^2 \right)^{3/2}}\pa_j\mathcal{N}
-\frac{ 3 \pa_k\eta\pa_\ell\eta\pa_{jk\ell}\eta
  \left(\nabla_\ast\eta\cdot\nabla_\ast\pa_i\eta \right)}{ \left(1+ \left|\nabla_\ast\eta \right|^2 \right)^{5/2}}\mathcal{N}
+\frac{ \left|\nabla_\ast\eta \right|^2\Delta_\ast\pa_{ij}\eta
+ \left(\nabla_\ast\eta\cdot\nabla_\ast\pa_{ij}\eta \right)\Delta_\ast\eta}{\sqrt{1+ \left|\nabla_\ast\eta \right|^2}
\left(1+\sqrt{1+ \left|\nabla_\ast\eta \right|^2} \right)}\mathcal{N}
\\
  &\quad
  +\frac{3 \left(\nabla_\ast\eta\cdot\nabla_\ast\pa_i\eta \right)\Delta_\ast\pa_j\eta
+ \left(\nabla_\ast\pa_i\eta\cdot\nabla_\ast\pa_j\eta \right)\Delta_\ast\eta}{\sqrt{1+ \left|\nabla_\ast\eta \right|^2}
\left(1+\sqrt{1+ \left|\nabla_\ast\eta \right|^2} \right)}\mathcal{N}
-\frac{ \left|\nabla_\ast\eta \right|^2\Delta_\ast\pa_j\eta
   \left(\nabla_\ast\eta\cdot\nabla_\ast\pa_i\eta \right)}{ \left(1+ \left|\nabla_\ast\eta \right|^2 \right)^{3/2}
  \left(1+\sqrt{1+ \left|\nabla_\ast\eta \right|^2} \right)}\mathcal{N}
\\
  &\quad
  -\frac{ \left|\nabla_\ast\eta \right|^2\Delta_\ast\pa_j\eta \left(\nabla_\ast\eta\cdot\nabla_\ast\pa_i\eta \right)}
  { \left(1+ \left|\nabla_\ast\eta \right|^2 \right)\left(1+\sqrt{1+ \left|\nabla_\ast\eta \right|^2} \right)^2}\mathcal{N}+\frac{ 15 \left(\pa_k\eta\pa_\ell\eta\pa_{k\ell}\eta \right) \left(\nabla_\ast\eta\cdot\nabla_\ast\pa_j\eta \right) \left(\nabla_\ast\eta\cdot\nabla_\ast\pa_i\eta \right)}
  { \left(1+ \left|\nabla_\ast\eta \right|^2 \right)^{7/2}}\mathcal{N}
\\
&\quad
+\frac{ \left|\nabla_\ast\eta \right|^2\Delta_\ast\pa_i\eta \left(\nabla_\ast\eta\cdot\nabla_\ast\pa_j\eta \right)}
  { \left(1+ \left|\nabla_\ast\eta \right|^2 \right)^{3/2} \left(1+\sqrt{1+ \left|\nabla_\ast\eta \right|^2} \right)}\mathcal{N}
  +\frac{ \left|\nabla_\ast\eta \right|^2\Delta_\ast\pa_i\eta \left(\nabla_\ast\eta\cdot\nabla_\ast\pa_j\eta \right)}
  { \left(1+|\nabla_\ast\eta|^2 \right) \left(1+\sqrt{1+|\nabla_\ast\eta|^2} \right)^2}\mathcal{N}
\\
  &\quad
+\frac{ \left|\nabla_\ast\eta \right|^2\Delta_\ast\eta \left(\nabla_\ast\pa_i\eta\cdot\nabla_\ast\pa_j\eta \right)}
  { \left(1+ \left|\nabla_\ast\eta \right|^2 \right)^{3/2} \left(1+\sqrt{1+ \left|\nabla_\ast\eta \right|^2} \right)}\mathcal{N}
  +\frac{ \left|\nabla_\ast\eta \right|^2\Delta_\ast\eta \left(\nabla_\ast\eta\cdot\nabla_\ast\pa_{ij}\eta \right)}
  {\left(1+ \left|\nabla_\ast\eta \right|^2 \right)^{3/2}
  \left(1+\sqrt{1+ \left|\nabla_\ast\eta \right|^2} \right)}\mathcal{N}
\\
  &\quad
-\frac{ 3  \left|\nabla_\ast\eta \right|^2\Delta_\ast\eta \left(\nabla_\ast\eta\cdot\nabla_\ast\pa_j\eta \right) \left(\nabla_\ast\eta\cdot\nabla_\ast\pa_i\eta \right)}
  { \left(1+ \left|\nabla_\ast\eta \right|^2 \right)^{5/2} \left( 1+\sqrt{1+ \left|\nabla_\ast\eta \right|^2} \right)}\mathcal{N}
  +\frac{ \left|\nabla_\ast\eta \right|^2\Delta_\ast\eta \left(\nabla_\ast\pa_i\eta\cdot\nabla_\ast\pa_j\eta \right)}{ \left(1+ \left|\nabla_\ast\eta \right|^2 \right)
  \left(1+\sqrt{1+ \left|\nabla_\ast\eta \right|^2} \right)^2}\mathcal{N}
\\
  &\quad
+\frac{ \left|\nabla_\ast\eta \right|^2\Delta_\ast\eta \left(\nabla_\ast\eta\cdot\nabla_\ast\pa_{ij}\eta \right)}
  { \left(1+ \left|\nabla_\ast\eta \right|^2 \right) \left(1+\sqrt{1+ \left|\nabla_\ast\eta \right|^2} \right)^2}\mathcal{N}
  - \frac{ 2 \left|\nabla_\ast\eta \right|^2\Delta_\ast\eta \left(\nabla_\ast\eta\cdot\nabla_\ast\pa_j\eta \right) \left(\nabla_\ast\eta\cdot\nabla_\ast\pa_i\eta \right)}
  { \left(1+ \left|\nabla_\ast\eta \right|^2 \right)^{3/2} \left(1+\sqrt{1+ \left|\nabla_\ast\eta \right|^2} \right)^4}\mathcal{N}
\\
&\quad
+\frac{\pa_{ki}\eta\pa_\ell\eta\pa_{jk\ell}\eta+\pa_k\eta\pa_{i\ell}\eta\pa_{jk\ell}\eta+\pa_k\eta\pa_{\ell}\eta\pa_{ijk\ell}\eta}
  { \left(1+ \left|\nabla_\ast\eta \right|^2 \right)^{3/2}}\mathcal{N} +\frac{2\pa_{kj}\eta\pa_{\ell}\eta\pa_{ik\ell}\eta}{\left(1+ \left|\nabla_\ast\eta \right|^2 \right)^{3/2}}\mathcal{N} +\frac{2\pa_{kj}\eta\pa_{i\ell}\eta\pa_{k\ell}\eta}{ \left(1+ \left|\nabla_\ast\eta \right|^2 \right)^{3/2}}\mathcal{N}
\\
&\quad
- \frac{ 3 \pa_i \left(\pa_k\eta\pa_\ell\eta\pa_{k\ell}\eta \right)
  \left(\nabla_\ast\eta\cdot\nabla_\ast\pa_j\eta \right)+ \left(\pa_k\eta\pa_\ell\eta\pa_{k\ell}\eta \right)\pa_i \left(\nabla_\ast\eta\cdot\nabla_\ast\pa_j\eta \right)}
  { \left(1+ \left|\nabla_\ast\eta \right|^2 \right)^{5/2}}\mathcal{N}
+\frac{2\pa_{ikj}\eta\pa_\ell\eta\pa_{k\ell}\eta}{ \left(1+ \left|\nabla_\ast\eta \right|^2 \right)^{3/2}}\mathcal{N}
  .
  \end{aligned}  \]
We can now obtain the following result.
\begin{proposition}\label{prop:horizon_3}
    \[
    \left\|F^1\right\|_0+ \left\|F^2 \right\|_0+ \left\|F^3 \right\|_{-1/2}+ \left\|F^4 \right\|_{1/2}\lesssim \left(\mathcal{E}^{1/2}+\mathcal{E} \right)\mathcal{D}^{1/2}.
    \]
  \end{proposition}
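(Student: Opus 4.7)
The plan is to imitate the scheme used in Propositions \ref{prop:horizon_1} and \ref{prop:horizon_2}: every summand in $F^1,F^2,F^3,F^4$ is a multilinear expression in the unknowns $u$, $p$, $\eta$ and in the geometric coefficients $\mathcal{A},\mathcal{N},K,J$, and the latter are polynomial or rational in $\bar\eta$ and its derivatives, hence controlled by $\eta$ in Sobolev norms on $\Sigma$ and on $\Omega$ via the harmonic-extension bound $\|\bar\eta\|_{H^{s+1/2}(\Omega)}\lesssim \|\eta\|_{H^s(\Sigma)}$. For each product I would apply H\"older's inequality with the standard embeddings $H^1(\Omega)\hookrightarrow L^6(\Omega)$, $H^{3/2+\varepsilon}(\Omega)\hookrightarrow L^\infty(\Omega)$, together with the trace estimate $H^s(\Omega)\hookrightarrow H^{s-1/2}(\Sigma)$ and the uniform bound \eqref{est:bound_coeffi}. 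In every product one factor is identified as \emph{top order} and placed in a norm belonging to the dissipation $\mathcal{D}$ (producing $\mathcal{D}^{1/2}$), while the remaining factors are placed into norms controlled by the energy $\mathcal{E}$ (producing $\mathcal{E}^{1/2}$ or $\mathcal{E}$). Rational prefactors $(1+|\nabla_*\eta|^2)^{-n/2}$ are harmless in $L^\infty$ thanks to $\|\eta\|_{H^3}\lesssim \mathcal{E}^{1/2}\ll 1$ from $\mathcal{E}\le\delta$.

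The terms $F^2$ and $F^4$ are the shortest. In $F^2 = -\dive_{\partial_{ij}\mathcal{A}}u-\dive_{\partial_j\mathcal{A}}\partial_i u-\dive_{\partial_i\mathcal{A}}\partial_j u$, the worst summand is the first: the coefficient involves $\partial_{ij}\nabla\bar\eta$ which sits in $L^3(\Omega)$ with bound $\|\eta\|_{H^{7/2}}\lesssim \mathcal{D}^{1/2}$, paired against $\nabla u\in L^6(\Omega)\hookleftarrow H^1$ with bound $\|u\|_{H^2}\lesssim \mathcal{E}^{1/2}$. The other summands of $F^2$ are milder. For $\|F^4\|_{H^{1/2}(\Sigma)}$ I would lift through the trace theorem, so that each product $u\cdot\partial^2\mathcal{N}$ is estimated in $H^1(\Omega)$ by pairing the top $\eta$-factor ($\partial^3_*\bar\eta\in L^2$, bounded by $\|\eta\|_{H^{7/2}}$) against a lower-order factor of $u$ (bounded by $\|u\|_{H^2}$), yielding $\mathcal{D}^{1/2}\mathcal{E}^{1/2}$.

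The bulk norm $\|F^1\|_0$ has four prototypes of summand. (i) Linear-in-$u$ terms like $\partial_{ij}\partial_t\bar\eta\cdot KW\partial_3 u$ are bounded by $\|\partial_{ij}\partial_t\bar\eta\|_{L^3}\|\nabla u\|_{L^6}\lesssim \|\partial_t\eta\|_{H^{5/2}}\|u\|_{H^2}$, yielding $\mathcal{D}^{1/2}\mathcal{E}^{1/2}$. (ii) Transport-type summands $\partial_j u\cdot\nabla_{\mathcal{A}}\partial_i u$ and $u\cdot\nabla_{\partial\mathcal{A}}\partial u$ are controlled by pairing $\nabla^2 u\in L^2$ against $u\in L^\infty\hookleftarrow H^2$ or a coefficient times $\nabla u\in L^6$. (iii) Pressure summands $\nabla_{\partial_{ij}\mathcal{A}}p$ are estimated by $\|\partial^3\bar\eta\|_{L^3}\|\nabla p\|_{L^6}\lesssim \|\eta\|_{H^{7/2}}\|p\|_{H^2}$, which is a priori only $\mathcal{D}$; the extra $\mathcal{E}^{1/2}$ is extracted by inserting the smallness factor $\|\eta\|_{H^3}\lesssim \mathcal{E}^{1/2}$, available since $\partial_{ij}\mathcal{A}$ is quadratic (or higher) in $\bar\eta$. (iv) Viscous summands $\dive_{\partial\mathcal{A}}\mathbb{D}_{\partial\mathcal{A}}u$ split identically; the ones carrying two derivatives of $\bar\eta$ simultaneously are automatically quadratic in $\mathcal{E}$ times one dissipation factor.

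The main obstacle is the boundary term $\|F^3\|_{-1/2}$: this is the longest expression, and several of its summands contain up to fourth-order derivatives of $\eta$ multiplied by cubic or quintic rational functions of $\nabla_*\eta$ (for instance $(\partial_k\eta\partial_\ell\eta)\partial_{ijk\ell}\eta$, and the piece $S_{\mathcal{A}}(p,u)\partial_{ij}\mathcal{N}$ that carries $\nabla p$ to the boundary). My plan is to work in duality: test against $\phi\in H^{1/2}(\Sigma)$, lift to $\Phi\in H^1(\Omega)$ by trace, integrate by parts in $x_1,x_2$ to move one derivative off the top-order $\eta$-factor, and then apply the same H\"older--Sobolev scheme. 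The fourth-derivative term is then pinned at $\|\partial^3_*\eta\|_{L^2(\Sigma)}\lesssim \|\eta\|_{H^{7/2}}\lesssim \mathcal{D}^{1/2}$, and the remaining factors (including the rational weight) are $L^\infty$-bounded by $\|\eta\|_{H^3}\lesssim \mathcal{E}^{1/2}$. For $S_{\mathcal{A}}(p,u)\partial_{ij}\mathcal{N}$ I lift to $\Omega$ and estimate by $\|\nabla p\|_{H^1(\Omega)}\|\partial^3_*\bar\eta\|_{H^1(\Omega)}\lesssim \|p\|_{H^2}\|\eta\|_{H^{7/2}}\cdot \|\eta\|_{H^3}$, absorbing one extra factor via smallness to produce the required $\mathcal{E}^{1/2}\mathcal{D}^{1/2}$. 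Once these prototypes are in place, the remaining summands fall into one of them, and the conclusion is obtained by summation. The principal difficulty is bookkeeping rather than any single estimate.
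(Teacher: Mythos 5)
Your plan is essentially the same as the paper's: H\"older--Sobolev--trace estimates in which, for each multilinear summand, one factor is assigned to a dissipation norm and the rest to energy norms, with a duality argument for the fourth-order $\eta$-terms in $F^3$. The only structural difference is your treatment of $F^3$: you propose an integration by parts on the lifted test function, while the paper pairs $\nabla_\ast^4\eta$ directly against the remaining factors in $H^{-1/2}(\Sigma)\times H^{1/2}(\Sigma)$ and estimates $\|\nabla_\ast^4\eta\|_{H^{-1/2}(\Sigma)}\lesssim\|\eta\|_{7/2}\lesssim\mathcal{D}^{1/2}$. Both routes work; the paper's is cleaner because it keeps the high-order $\eta$-factor intact and never touches the test function.

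There is one genuine error in your write-up, in the discussion of the pressure summand $\nabla_{\partial_{ij}\mathcal{A}}p$. You claim $\partial_{ij}\mathcal{A}$ is ``quadratic (or higher) in $\bar\eta$'' and use this to extract a smallness factor. That is false: from \eqref{components}, $A=\partial_1\bar\eta\,W$, $B=\partial_2\bar\eta\,W$, $J=1+\bar\eta+\partial_3\bar\eta\,W$, so $\mathcal{A}-I$ (hence $\partial_{ij}\mathcal{A}$) has a nonvanishing \emph{linear} part in $\bar\eta$ --- for instance the linear contribution $\partial_{ij}\partial_1\bar\eta\,W$ to $\partial_{ij}(AK)$. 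Fortunately the issue you were trying to fix is not really there. It arose from the suboptimal Sobolev assignment $\|\partial_{ij}\nabla\bar\eta\|_{L^3(\Omega)}\lesssim\|\eta\|_{H^{7/2}(\Sigma)}$. Using the harmonic-extension gain of half a derivative together with $H^{1/2}(\Omega)\hookrightarrow L^3(\Omega)$ in three dimensions, the correct estimate is
\[
\|\partial_{ij}\nabla\bar\eta\|_{L^3(\Omega)}\lesssim\|\bar\eta\|_{H^{7/2}(\Omega)}\lesssim\|\eta\|_{H^3(\Sigma)}\lesssim\mathcal{E}^{1/2},
\]
so that $\|\nabla_{\partial_{ij}\mathcal{A}}p\|_0\lesssim\|\partial_{ij}\nabla\bar\eta\|_{L^3}\|\nabla p\|_{L^6}\lesssim\mathcal{E}^{1/2}\mathcal{D}^{1/2}$ directly, with no need for a smallness insertion. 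You should delete the quadraticity argument and replace it with this Sobolev count, which is in fact what the paper does (it places $\nabla^3\bar\eta$ in norms controlled by $\|\eta\|_3$, not $\|\eta\|_{7/2}$).
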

  \begin{proof}
  To prove the estimates of forcing terms, it suffices to consider the case of twice spatial derivatives, since the once differentiable case is similar yet much easier. It is easy to see that
  \[
  \begin{aligned}
    \left\|F^1 \right\|_{H^0(\Om)}^2&\lesssim\int_\Om \left|\nabla^2\pa_t\bar{\eta} \right|^2|\nabla u|^2+ \left|\nabla\pa_t\bar{\eta} \right|^2 \left|\nabla^2\bar{\eta} \right|^2 |\nabla u|^2+ \left|\nabla\pa_t\bar{\eta} \right|^2
  \left|\nabla^2u \right|^2+ \left|\pa_t\bar{\eta} \right|^2 \left|\nabla^3\bar{\eta} \right|^2|\nabla u|^2\\
    &\quad+\int_\Om \left|\pa_t\bar{\eta} \right|^2 \left|\nabla^2\bar{\eta} \right|^2 \left|\nabla^2 u \right|^2+ \left|\nabla^2\bar{\eta} \right|^2
    |\nabla u|^4+ |u|^2 \left|\nabla^3\bar{\eta} \right|^2 |\nabla u|^2 +|u|^2 \left|\nabla^2\bar{\eta} \right|^2 \left|\nabla^2 u\right|^2\\
    &\quad+\int_\Om \left|\nabla u \right|^2  \left|\nabla^2u \right|^2+ \left|\nabla^3\bar{\eta} \right|^2 \left|\nabla p \right|^2+ \left|\nabla^2\bar{\eta} \right|^2  \left|\nabla^2 p \right|^2+ \left|\nabla^3\bar{\eta} \right|^2 \left( \left|\nabla^2\bar{\eta} \right|^2 |\nabla u|^2+ \left|\nabla^2u \right|^2 \right)\\
    &\quad+\int_\Om \left|\nabla^2\bar{\eta} \right|^4 \left|\nabla^2u \right|^2+ \left|\nabla^2\bar{\eta} \right|^2
    \left|\nabla^2u \right|^2+ \left|\nabla^4\bar{\eta} \right|^2  |\nabla u|^2.
  \end{aligned}
  \]
  Then by the H\"older inequality, the Sobolev inequality and usual trace theory, we have 
  \[
  \begin{aligned}
\left\|F^1 \right\|_{H^0}^2 &\lesssim \left( \left\|\nabla^2\pa_t\bar{\eta} \right\|_{L^2}^2+ \left\|\nabla\pa_t\bar{\eta} \right\|_{L^4}^2 \left\|\nabla^2\bar{\eta} \right\|_{L^4}^2 \right) \|\nabla u\|_{L^\infty}^2+ \left\|\nabla\pa_t\bar{\eta} \right\|_{L^4}^2 \left\|\nabla^2u \right\|_{L^4}^2\\
    &\quad+ \left\|\pa_t\bar{\eta} \right\|_{L^\infty}^2 \left\|\nabla^3\bar{\eta} \right\|_{L^2}^2
    \|\nabla u\|_{L^\infty}^2+ \left\|\pa_t\bar{\eta} \right\|_{L^6}^2 \left\|\nabla^2\bar{\eta} \right\|_{L^6}^2
    \|\nabla u\|_{L^6}^2+ \left\|\nabla^4\bar{\eta} \right\|_{L^2}^2 \|\nabla u\|_{L^\infty}^2\\
    &\quad+ \left\|\nabla^2\bar{\eta} \right\|_{L^4}^2
    \|\nabla u\|_{L^4}^2
    \|\nabla u\|_{L^\infty}^2+\|u\|_{L^\infty}^2 \left\|\nabla^3\bar{\eta} \right\|_{L^2}^2
    \|\nabla u\|_{L^\infty}^2+\|u\|_{L^\infty}^2 \left\|\nabla^2\bar{\eta} \right\|_{L^\infty}^2 \left\|\nabla^2 u \right\|_{L^2}^2\\
    &\lesssim \left(\mathcal{E}+\mathcal{E}^2 \right)\mathcal{D}.
  \end{aligned}
  \]
Similarly, for $F^2$, we have 
\[
  \begin{aligned}
\left\|F^2 \right\|_0^2&\lesssim\int_\Om  \left|\nabla_\ast^2\nabla\bar{\eta} \right|^2 \left|\nabla u \right|^2+ \left|\nabla_\ast\nabla\bar{\eta} \right|^2|\nabla_\ast\nabla u|^2\\
&\lesssim \left\| \left|\nabla_\ast^2\nabla\bar{\eta} \right|^2 \right\|_{L^3}^2
\left\|\nabla u \right\|_{L^6}^2 + \left\| \left|\nabla_\ast\nabla\bar{\eta} \right|^2 \right\|_{L^\infty}^2
\left\|\nabla\nabla_\ast u \right\|_{L^2}^2 \lesssim\|\eta\|_3^2\|u\|_2^2+\|\eta\|_{7/2}^2\|u\|_2\lesssim\mathcal{E}\mathcal{D}.
\end{aligned}
  \]
We now turn to the estimate of term $F^3$. We first focus on the terms of $\eta$ with derivatives of $4$-th order. For $v\in H^1(\Om)$ with trace $v|_\Sigma\in H^{1/2}$, we will use the dual form and product of estimates in Sobolev spaces, and see
\[
\begin{aligned}
&\left|\int_\Sigma\left(\f{\pa_k\eta\pa_{\ell}\eta\pa_{ijk\ell}\eta}
{(1+ \left|\nabla_\ast\eta \right|^2)^{3/2}} +\frac{ \left|\nabla_\ast\eta \right|^2
\Delta_\ast\pa_{ij}\eta}{\sqrt{1+ \left|\nabla_\ast\eta \right|^2}(1+\sqrt{1+ \left|\nabla_\ast\eta \right|^2})}\right)\mathcal{N}\cdot v\right|\\
&\lesssim\left|\int_\Sigma\left(\pa_k\eta\pa_{\ell}\eta\pa_{ijk\ell}\eta + \left|\nabla_\ast\eta \right|^2\Delta_\ast\pa_{ij}\eta\right)\mathcal{N}\cdot v\right|
\lesssim  \left\|\nabla_\ast^4\eta \right\|_{-1/2,\Sigma} \left\|\nabla_\ast\eta\nabla_\ast\eta \left(v\cdot\mathcal{N} \right) \right\|_{1/2,\Sigma}
\lesssim \|\eta\|_{7/2}  \|\eta\|_3^2\|v\|_{1/2,\Sigma}.
\end{aligned}
\]
The other terms in $F^3$ can be bounded by
\[
\begin{aligned}
& \int_\Sigma \left|\nabla\nabla_\ast u \right|^2
\left|\nabla_\ast\nabla\bar{\eta} \right|^2+ \left|\nabla u \right|^2 \left|\nabla_\ast^2\nabla\bar{\eta} \right|^2
+ \left( |\nabla_\ast p|^2+ \left|\nabla_\ast\nabla\bar{\eta} \right|^2 |\nabla u|^2
+ \left|\nabla \nabla_\ast u \right|^2 \right) \left|\nabla_\ast^2\eta \right|^2 \\
&\quad+
\int_\Sigma\left( |p|^2+ \left|\nabla u \right|^2\right)
\left|\nabla_\ast^3\eta \right|^2
+  \int_\Sigma \left|\nabla_\ast\eta \right|^2 \left|\nabla_\ast^2\eta \right|^2
+ \left( \left|\eta \right|^2+ \left|\nabla_\ast\eta \right|^4+ \left|\nabla_\ast^2\eta \right|^2 \right) \left|\nabla_\ast^3\eta\right|^2 \\
      &\quad+ \int_\Sigma\left( \left|\nabla_\ast\eta \right|^2+
\left|\nabla_\ast\eta \right|^6 \right) \left|\nabla_\ast^2\eta \right|^2
\left|\nabla_\ast^3\eta \right|^2+
\left|\nabla_\ast\eta \right|^4 \left|\nabla_\ast^3\eta \right|^4
+\left(1+ \left|\nabla_\ast\eta \right|^2+ \left|\nabla_\ast\eta \right|^6 \right) \left|\nabla_\ast^2\eta \right|^6 \\
&\quad+  \int_\Sigma \left|\nabla_\ast^2\eta \right|^4
      \left|\nabla_\ast^3\eta \right|^2
+\left( \left|\nabla_\ast\eta \right|^8+ \left|\nabla_\ast\eta \right|^4 \right)
\left| \nabla_\ast^2\eta \right|^6+  \left|\nabla_\ast\eta \right|^4 \left|\nabla_\ast^4\eta \right|^2 \left|\nabla_\ast^3\eta \right|^2.
    \end{aligned}
    \]
It turns out that the above terms can be bounded further by $(\mathcal{E}+\mathcal{E}^2)\mathcal{D}$.

Finally, we apply similar arguments as in the estimate of $F^3$ to estimate $F^4$,
\[
\begin{aligned}
\left\|F^4 \right\|_{1/2}^2&\lesssim  \left\|\nabla u_\ast\cdot\nabla_\ast^2\eta \right\|_{1/2,\Sigma}^2
+ \left\|u_\ast\cdot\nabla_\ast^3\eta \right\|_{1/2,\Sigma}^2\\
&\lesssim \left\|\nabla u \right\|_{H^{3/2}(\Sigma)}^2 \left\|\eta \right\|_3^2
+\|u\|_{L^\infty(\Sigma)}^2 \|\eta\|_{7/2}^2+\|u\|_{W^{1/2, \infty}(\Sigma)} \left\|\nabla^3_\ast\eta \right\|_{L^2}\\
&
\lesssim \|u\|_3^2\|\eta\|_3^2+ \|u\|_2^2  \|\eta\|_{7/2}^2\lesssim\mathcal{E}\mathcal{D}.
\end{aligned}
\]
\end{proof}

\subsection{Elliptic estimates}
In order to close the a priori estimates, we need to employ \eqref{est:elliptic_1} and \eqref{est:elliptic_2}. Since
\[
G^1=-\pa_tu+\pa_t\bar{\eta}KW\pa_3u-u\cdot\nabla_{\mathcal{A}}u+\nabla_{I-\mathcal{A}}p -\dive_{\mathcal{A}}\mathbb{D}_{I-\mathcal{A}}u-\dive_{I-\mathcal{A}}\mathbb{D}u,
\]
\[
G^2=\dive_{I-\mathcal{A}}u,\text{ and } G^3=-\mathbb{D}_{I-\mathcal{A}}u\mathcal{N}\cdot\mathcal{T}-\mathbb{D}u(e_3-\mathcal{N})\cdot\mathcal{T}-\mathbb{D}ue_3\cdot(\tau-\mathcal{T}),
\]
we have the following estimate in order to use the elliptic theory in Theorem \ref{thm:elliptic}.
\begin{proposition}\label{prop:perturb}
\begin{align*}
\left\|G^1 \right\|_0^2+ \left\|G^2 \right\|_1^2+ \left\|G^3 \right\|_{1/2,\Sigma}^2\lesssim\mathcal{E}_\shortparallel+\mathcal{E}^2,
\intertext{ and }
\left\|G^1 \right\|_1^2+ \left\|G^2 \right\|_2^2+ \left\|G^3 \right\|_{3/2,\Sigma}^2\lesssim\mathcal{D}_\shortparallel
+ \left(\mathcal{E}+\mathcal{E}^2 \right)\mathcal{D}.
\end{align*}
\end{proposition}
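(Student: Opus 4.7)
The overall strategy is to observe that every summand in $G^1$, $G^2$, $G^3$ is either a piece that sits directly inside $\mathcal{E}_\shortparallel$ or $\mathcal{D}_\shortparallel$ (such as $-\pa_tu$ in $G^1$), or a genuinely nonlinear product carrying a prefactor that vanishes at the equilibrium, namely $\mathcal{A}-I$, $\bar\eta$, $\nabla\bar\eta$, $\pa_t\bar\eta$, $\mathcal{N}-e_3$, or $\mathcal{T}-\tau$. These perturbative prefactors are bounded in $L^\infty$ through \eqref{est:bound_coeffi} and the harmonic-extension estimate $\|\bar\eta\|_{H^k(\Om)}\lesssim\|\eta\|_{H^{k-1/2}(\Sigma)}$ combined with the Sobolev embeddings $H^{3/2}(\Sigma)\hookrightarrow L^\infty(\Sigma)$ and $H^2(\Om)\hookrightarrow L^\infty(\Om)$; under the smallness $\|\eta\|_3^2\le\delta$ they produce the required small factors $\mathcal{E}$ or $\mathcal{E}^2$.

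For the first inequality I would proceed term by term. In $G^1$, the term $-\pa_tu$ contributes $\|\pa_tu\|_0^2\le\mathcal{E}_\shortparallel$, while the advection term is bounded by $\|u\|_{L^\infty}\|\nabla u\|_{L^2}\lesssim\|u\|_2^2\lesssim\mathcal{E}^2$, the Piola-type term by $\|\pa_t\eta\|_{3/2}\|u\|_2$, and the terms $\nabla_{I-\mathcal{A}}p$ and $\dive_\mathcal{A}\mathbb{D}_{I-\mathcal{A}}u$ by products of the form $\|\eta\|_3^2(\|u\|_2^2+\|p\|_1^2)$, all of which close inside $\mathcal{E}^2$. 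For $G^2$, $H^1$ Sobolev algebra yields $\|\dive_{I-\mathcal{A}}u\|_1\lesssim\|\mathcal{A}-I\|_{H^2}\|u\|_{H^2}\lesssim\|\eta\|_{5/2}\|u\|_2$. For $G^3$, I use that $\mathcal{N}-e_3$ and $\mathcal{T}-\tau$ are linear in $\nabla_\ast\eta$ plus higher-order corrections, so the trace $\mathbb{D}u|_\Sigma\in H^{1/2}(\Sigma)$ times $\nabla_\ast\eta\in H^\infty$ falls into the Sobolev multiplier framework on $\Sigma=\mathbb{T}^2$.

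For the second, more delicate, inequality the linear part $-\pa_tu$ now contributes $\|\pa_tu\|_1^2\le\mathcal{D}_\shortparallel$, and each nonlinear product is split by a Leibniz expansion into a ``high'' factor (bounded in dissipation norm) times a ``low'' factor (bounded in energy norm). Typical examples are $\|\nabla(u\cdot\nabla_\mathcal{A} u)\|_0\lesssim\|u\|_{L^\infty}\|\nabla^2u\|_0+\|\nabla u\|_{L^4}^2\lesssim\mathcal{E}^{1/2}\mathcal{D}^{1/2}$ and $\|\nabla\dive_\mathcal{A}\mathbb{D}_{I-\mathcal{A}}u\|_0\lesssim\|\eta\|_{5/2}\|u\|_3+\|\eta\|_{7/2}\|u\|_2\lesssim\mathcal{E}^{1/2}\mathcal{D}^{1/2}$, both of which sit inside $(\mathcal{E}+\mathcal{E}^2)\mathcal{D}$. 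The $H^2$ estimate of $G^2$ follows by the same Leibniz distribution using the algebra property of $H^2(\Om)$, and the $H^{3/2}(\Sigma)$ estimate of $G^3$ by trace of an $H^2(\Om)$-extension. The main obstacle I expect is precisely this $H^{3/2}(\Sigma)$ bound on $G^3$: since $\mathbb{D}u|_\Sigma$ is already at the borderline of trace regularity, the multiplication by $\mathcal{N}-e_3$ must be arranged so that the derivatives always fall on the smooth factor, which forces a paraproduct-style decomposition and careful use of the fractional Sobolev multiplier inequality on $\mathbb{T}^2$; once this is set up, all remaining terms reduce to applications of the same interpolation template.
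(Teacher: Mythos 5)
Your proposal is correct and follows essentially the same route as the paper: term-by-term H\"older/Sobolev product estimates, with linear pieces ($-\pa_t u$) absorbed directly into $\mathcal{E}_\shortparallel$ or $\mathcal{D}_\shortparallel$ and nonlinear pieces carrying small prefactors $\mathcal{A}-I$, $\nabla\bar\eta$, $\pa_t\bar\eta$, $\mathcal{N}-e_3$, $\tau-\mathcal{T}$ that deliver the extra factors of $\mathcal{E}$. The one place you over-engineer is the $\|G^3\|_{3/2,\Sigma}$ bound: no paraproduct decomposition is needed, because $H^{3/2}(\mathbb{T}^2)$ is a Banach algebra ($3/2>d/2=1$), $\nabla u|_\Sigma\in H^{3/2}(\Sigma)$ follows from the trace theorem on $u\in H^3(\Om)$, and the multiplier $\nabla_\ast\eta$ actually lives in $H^2(\Sigma)\supset H^{3/2}(\Sigma)$ from $\eta\in H^3(\Sigma)$ alone; the paper simply writes $\|G^3\|_{3/2,\Sigma}^2\lesssim(\|\nabla_\ast\eta\|_{3/2}^2+\|\nabla\bar\eta\|_{3/2,\Sigma}^2)\|\nabla u\|_{3/2,\Sigma}^2\lesssim\|\eta\|_3^2\|u\|_3^2\lesssim\mathcal{E}\mathcal{D}$ and this closes without any refined decomposition.
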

\begin{proof}
From the expression of $G^1$, we use the product estimates in Sobolev spaces to deduce 
\[
\begin{aligned}
\left\|G^1 \right\|_0^2 &\lesssim  \left\|\pa_tu \right\|_0^2+ \left\|\pa_t\eta \right\|_{3/2}^2 \|u\|_1^2+\|u\|_1^2\|u\|_2^2+
\|\eta\|_3^2\|p\|_1^2  +\|\eta\|_3^2\|u\|_2^2\lesssim \mathcal{E}_\shortparallel+\mathcal{E}^2.
\end{aligned}
\]
Following the same argument, we also have 
  \[
  \begin{aligned}
\left\|G^1 \right\|_1^2&\lesssim  \left\|\pa_tu \right\|_1^2
+ \left\|\pa_t\eta \right\|_{3/2}^2  \|\eta\|_3^2\|u\|_3^2+\|u\|_2^2\|u\|_3^2+ \left\|\nabla\bar{\eta} \right\|_{L^\infty}^2\|p\|_2^2\\
  &\quad+\left\|\nabla\bar{\eta} \right\|_{L^4(\Om)}^2
\|\nabla p\|_{L^4(\Om)}^2 + \left\|\nabla\bar{\eta} \right\|_{L^\infty}^2
\|u\|_3^2+ \left\|\nabla^2\bar{\eta} \right\|_{L^4(\Om)}^2
\left\|\nabla^2 u \right\|_{L^4(\Om)}^2+ \left\|\nabla^3\bar{\eta} \right\|_{L^4(\Om)}^2
\|\nabla u\|_{L^4(\Om)}^2\\
  &\lesssim \|\pa_tu\|_1^2+ \left(1+ \left\|\pa_t\eta \right\|_{3/2}^2 \right)\|\eta\|_3^2
  \|u\|_3^2+\|u\|_2^2\|u\|_3^2+\|\eta\|_3^2\|p\|_2^2
\lesssim \mathcal{D}_\shortparallel+ \left(\mathcal{E}+\mathcal{E}^2 \right)\mathcal{D}.
  \end{aligned}
  \]
Since $G^2=\dive_{I-\mathcal{A}}u$, we directly have
\begin{align*}
\left\|G^2 \right\|_1^2&\lesssim \left\|\nabla\bar{\eta} \right\|_{L^\infty}
\|u\|_2^2+ \left\|\nabla^2\bar{\eta} \right\|_{L^4(\Om)}^2
\|\nabla u\|_{L^4(\Om)}^2
\lesssim \|\eta\|_3^2\|u\|_2^2+\|\eta\|_{5/2}^2 \|u\|_2^2\lesssim \mathcal{E}^2,
\intertext{ and }
\left\|G^2 \right\|_2^2&\lesssim \left\|\nabla\bar{\eta} \right\|_{L^\infty}\|u\|_3^2+
\left\|\nabla^3\bar{\eta} \right\|_2^2 \|\nabla u\|_{L^\infty}^2
\lesssim  \|\eta\|_3^2\|u\|_3^2\lesssim \mathcal{E}\mathcal{D}.
\end{align*}
From the explicit form of $G^3$, we apply the H\"older inequality and Sobolev inequality and get 
  \[
  \begin{aligned}
\left\|G^3 \right\|_{1/2,\Sigma}^2&\lesssim \left(\|\nabla\eta\|_{L^\infty(\Sigma)}^2+ \left\|\nabla\bar{\eta} \right\|_{L^\infty(\Sigma)}^2 \right)
\|\nabla u\|_{1/2,\Sigma}^2
+ \left( \|\nabla\eta\|_{W^{1/2,4}(\Sigma)}^2+ \|\nabla\bar{\eta}\|_{W^{1/2,4}(\Sigma)}^2 \right) \|\nabla u\|_{L^4(\Sigma)}^2\\
    &\lesssim  \|\eta\|_3^2\|u\|_2^2\lesssim \mathcal{E}^2.
  \end{aligned}
  \]
  The same argument enables us to have 
  \[
  \begin{aligned}
\left\|G^3 \right\|_{3/2,\Sigma}^2&\lesssim \left( \left\|\nabla\eta \right\|_{3/2}^2+ \|\nabla\bar{\eta}\|_{3/2,\Sigma}^2 \right)
\|\nabla u\|_{3/2,\Sigma}^2
\lesssim \|\eta\|_3^2\|u\|_3^2\lesssim \mathcal{E}\mathcal{D}.
  \end{aligned}
  \]
Combining the above estimates, we get the results.
\end{proof}
Proposition \ref{prop:perturb} allows us to control the remainder terms in energy $\mathcal{E}$ and dissipation $\mathcal{D}$.
\begin{theorem}\label{thm:enhanced_dissipation}
Under the same assumption in Theorem \ref{thm:horizontal}, we have
\begin{align*}
& \quad \|u\|_2^2+\|p\|_1^2+\|\pa_t\eta\|_{3/2}^2\lesssim \mathcal{E}_\shortparallel+\mathcal{E}^2,
\intertext{  and }
& \|u\|_3^2+\|p\|_2^2+\|\eta\|_{7/2}+ \left\|\pa_t\eta \right\|_{5/2}^2
 + \left\|\pa_t^2\eta \right\|_{1/2}^2\lesssim \mathcal{D}_\shortparallel+\mathcal{E}\mathcal{D}.
\end{align*}
\end{theorem}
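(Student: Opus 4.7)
The plan is to deduce both inequalities by applying the elliptic regularity statements of Theorem~\ref{thm:elliptic} to the Navier-Stokes system~\eqref{eq:new_ns}, using Proposition~\ref{prop:perturb} to control the perturbation forcing terms, and closing by way of the kinematic boundary condition $\pa_t\eta=u\cdot\mathcal{N}$ combined with trace theory on $\Sigma$.

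For the first inequality, I would identify \eqref{eq:new_ns} with the elliptic system \eqref{eq:Stokes_1} by taking $(v,q,\xi)=(u,p,\eta)$ with $G^1, G^2, G^3$ as defined just before Proposition~\ref{prop:perturb}, and $G^4=\pa_t\eta$ dictated by the kinematic boundary condition. Invoking estimate \eqref{est:elliptic_1} then yields
\[
\|u\|_2^2+\|p\|_1^2+\|\eta\|_{5/2}^2\lesssim \|G^1\|_0^2+\|G^2\|_1^2+\|G^3\|_{1/2}^2+\|\pa_t\eta\|_{3/2}^2,
\]
and Proposition~\ref{prop:perturb} immediately bounds the first three summands by $\mathcal{E}_\shortparallel+\mathcal{E}^2$. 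The residual $\|\pa_t\eta\|_{3/2}^2$ term I would handle via the kinematic condition itself: $\pa_t\eta=u\cdot\mathcal{N}$ on $\Sigma$ together with the trace theorem and Sobolev multiplicative estimates gives
\[
\|\pa_t\eta\|_{3/2,\Sigma}^2=\|u\cdot\mathcal{N}\|_{H^{3/2}(\Sigma)}^2\lesssim \|u\|_{H^2(\Om)}^2\bigl(1+\|\eta\|_{5/2}^2\bigr)\lesssim \|u\|_2^2+\mathcal{E}\,\|u\|_2^2,
\]
after using \eqref{est:bound_coeffi} and $\|\eta\|_{5/2}^2\lesssim\mathcal{E}$. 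Adding this to the elliptic inequality and absorbing the $\mathcal{E}$-small factor (using the smallness hypothesis $\mathcal{E}\le\delta$ from Theorem~\ref{thm:horizontal}) delivers the first estimate.

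The second inequality is obtained in strictly parallel fashion, replacing \eqref{est:elliptic_1} with \eqref{est:elliptic_2}. Proposition~\ref{prop:perturb} supplies $\|G^1\|_1^2+\|G^2\|_2^2+\|G^3\|_{3/2}^2\lesssim\mathcal{D}_\shortparallel+(\mathcal{E}+\mathcal{E}^2)\mathcal{D}$, while the trace and Sobolev product bounds give $\|\pa_t\eta\|_{5/2,\Sigma}^2\lesssim\|u\|_3^2(1+\|\eta\|_{7/2}^2)$. For $\|\pa_t^2\eta\|_{1/2}^2$ I would differentiate the kinematic condition in time,
\[
\pa_t^2\eta=\pa_tu\cdot\mathcal{N}+u\cdot\pa_t\mathcal{N},
\]
and combine the trace $H^1(\Om)\hookrightarrow H^{1/2}(\Sigma)$ (using $\|\pa_tu\|_1^2\le\mathcal{D}_\shortparallel$) with $\|\pa_t\mathcal{N}\|_{1/2,\Sigma}^2\lesssim\|\pa_t\eta\|_{3/2,\Sigma}^2\lesssim\mathcal{E}$ to produce the required $\mathcal{E}\mathcal{D}$-type remainders.

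The heart of the argument, and the only genuine difficulty, is the circular coupling: \eqref{est:elliptic_1} bounds $\|u\|_2^2$ in terms of the Dirichlet datum $\|G^4\|_{3/2}^2=\|\pa_t\eta\|_{3/2}^2$, while the kinematic boundary condition bounds $\|\pa_t\eta\|_{3/2}^2$ back in terms of the trace of $u$, hence by $\|u\|_2^2$. The closure is possible because the nonlinear contribution from $u\cdot\mathcal{N}-u_3=-u_*\cdot\nabla_*\eta$ carries the small prefactor $\|\eta\|_{5/2}^2\lesssim\mathcal{E}$, so the cross terms generated when substituting the trace bound back into the elliptic bound are $\mathcal{E}$-small and can be absorbed into the left-hand side under $\mathcal{E}\le\delta$. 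The same structural mechanism, with $\|\eta\|_{7/2}^2\lesssim\mathcal{D}$ providing the smallness, closes the second estimate.
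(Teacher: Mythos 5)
Your reduction to Theorem~\ref{thm:elliptic} with $(v,q,\xi)=(u,p,\eta)$ and $G^4=u\cdot\mathcal{N}$, the bounds on $G^1,G^2,G^3$ via Proposition~\ref{prop:perturb}, and the treatment of $\pa_t^2\eta$ are all consistent with the paper. The gap is in the closure step. You bound
\[
\left\|G^4\right\|_{3/2,\Sigma}^2=\left\|u\cdot\mathcal{N}\right\|_{3/2,\Sigma}^2
\lesssim \|u\|_2^2\bigl(1+\|\eta\|_{5/2}^2\bigr),
\]
and propose to feed this back into~\eqref{est:elliptic_1} and absorb. But the linear contribution coming from $u_3$ alone already gives $\|u_3\|_{3/2,\Sigma}^2\lesssim\|u\|_2^2$ with a universal $O(1)$ constant and no $\mathcal{E}$-prefactor; this is exactly the quantity on the left of~\eqref{est:elliptic_1}. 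Substituting produces $\|u\|_2^2\lesssim\mathcal{E}_\shortparallel+\mathcal{E}^2+C\|u\|_2^2+C\mathcal{E}\|u\|_2^2$ with $C$ not small, and nothing can be absorbed. The smallness you invoke only attaches to the nonlinear piece $u_\ast\cdot\nabla_\ast\eta$, which is not where the problem lies. As written, the circularity you correctly identify does \emph{not} close.

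The missing ingredient is the divergence constraint $\dive_{\mathcal{A}}u=0$. The paper estimates $\|u\cdot\mathcal{N}\|_{3/2,\Sigma}^2$ \emph{before} invoking the elliptic estimate, and does so by a strictly better bound than a naive trace of $u$. One decomposes $u\cdot\mathcal{N}=u_3-u_\ast\cdot\nabla_\ast\eta$, reduces $\|u_3\|_{3/2,\Sigma}^2$ to $\|u_3\|_{0,\Sigma}^2+\|\nabla_\ast u_3\|_{1/2,\Sigma}^2$, uses $u|_{\Sigma_b}=0$ and Poincar\'e to trade $\|u_3\|_{0,\Sigma}$ for $\|\pa_3u_3\|_0$, and then uses
\[
K\pa_3 u_3=-\pa_1 u_1-\pa_2 u_2+AK\pa_3 u_1+BK\pa_3 u_2
\]
to express $\pa_3u_3$ (and hence the offending vertical derivatives) through \emph{horizontal} derivatives of $u$, up to $\mathcal{E}^2$ remainders. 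This yields $\|u\cdot\mathcal{N}\|_{3/2,\Sigma}^2\lesssim\mathcal{E}_\shortparallel+\mathcal{E}^2$ directly, where $\mathcal{E}_\shortparallel$ contains only horizontal derivatives and in particular does not control $\|u\|_2^2$; the elliptic estimate then closes with no feedback loop. The analogous manipulation at one higher derivative gives $\|u\cdot\mathcal{N}\|_{5/2,\Sigma}^2\lesssim\mathcal{D}_\shortparallel+\mathcal{E}\mathcal{D}$ for the second inequality. This is precisely the ``key point of the bootstrap argument'' the paper alludes to in Section~3 — that the divergence equation lets the normal component be represented by horizontal ones — and it is what your proposal needs but omits.
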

\begin{proof}
By $G^4=u\cdot\mathcal{N}$, the Sobolev inequality and Poincar\'e inequality, we get 
\[
\begin{aligned}
\left\|u\cdot\mathcal{N} \right\|_{3/2,\Sigma}^2&\lesssim \|u_3\|_{3/2,\Sigma}^2+ \left\|u_\ast\cdot\nabla_\ast\eta \right\|_{3/2}^2
\lesssim \|u_3\|_{0,\Sigma}^2+ \left\|\nabla_\ast u_3 \right\|_{1/2,\Sigma}^2 +  \|u\|_2^2\|\eta\|_{5/2}^2\\
&\lesssim  \left\|\pa_3u_3 \right\|_0^2 +\|\nabla_\ast u\|_1^2+ \|u\|_2^2  \|\eta\|_{5/2}^2.
\end{aligned}
\]
Then we use $\dive_{\mathcal{A}}u=0$, i.e.,
\[
 K\pa_3 u_3=-\pa_1 u_1- \pa_2 u_2 + AK\pa_3 u_1  + BK\pa_3 u_2.
\]
By the smallness of $\eta$, we can obtain that there exists a constant $\delta_1>0$ such that $1\gtrsim\|J\|_{W^{1,\infty}}\ge\delta_1$,
so that we have 
\[
\left\|K\pa_3 u_3 \right\|_0^2 \gtrsim \left\|\pa_3u_3 \right\|_0^2.
\]
By the H\"older inequality and Sobolev inequality, we have
\[
\left\|AK\pa_3 u_1  + BK\pa_3 u_2 \right\|_0^2\lesssim \|u\|_2^2\|\eta\|_{5/2}^2.
\]
Thus
\[
\left\|\pa_3u_3 \right\|_0^2\lesssim \left\|\nabla_\ast u_\ast \right\|_0^2+\|u\|_2^2\|\eta\|_{5/2}^2.
\]
Therefore, we have
\[
\left\|u\cdot\mathcal{N} \right\|_{3/2,\Sigma}^2\lesssim  \left\|\nabla_\ast u \right\|_1^2++\|u\|_2^2\|\eta\|_{5/2}^2\lesssim \mathcal{E}_\shortparallel+\mathcal{E}^2.
\]
Similarly, we have
  \[
  \begin{aligned}
\left\|u\cdot\mathcal{N} \right\|_{5/2,\Sigma}^2&\lesssim \|u_3\|_{5/2,\Sigma}^2+ \left\|u_\ast \right\|_{5/2,\Sigma}^2
\|\eta\|_{5/2}^2+\|u\|_{3/2,\Sigma}^2\|\eta\|_{7/2}^2\\
  &\lesssim \|u_3\|_{1,\Sigma}^2+ \left\|\nabla_\ast^2 u_3 \right\|_{1/2,\Sigma}^2+  \left\|u \right\|_3^2
  \|\eta\|_{5/2}^2+\|u\|_2^2\|\eta\|_{7/2}^2\\
&\lesssim \left\|\pa_3u_3 \right\|_1^2+  \left\|\nabla_\ast^2 u \right\|_1^2+\|u\|_3^2\|\eta\|_{5/2}^2+\|u\|_2^2\|\eta\|_{7/2}^2.
\end{aligned}
  \]
We still use the divergence condition $\dive_{\mathcal{A}}u=0$ to get
\[
  \|\pa_3u_3\|_1^2\lesssim \|K\pa_3u_3\|_1^2\lesssim \|\nabla_\ast u_\ast\|_1^2+\|u\|_3^2\|\eta\|_{5/2}^2+\|u\|_2^2\|\eta\|_{7/2}^2,
  \]
  which implies
\[
  \|u\cdot\mathcal{N}\|_{5/2,\Sigma}^2\lesssim \mathcal{D}_\shortparallel+\mathcal{E}\mathcal{D}.
  \]
Hence, applying the elliptic theory in Theorem \ref{thm:elliptic} and Proposition \ref{prop:perturb}, we have 
\[
  \|u\|_2^2+\|p\|_1^2\lesssim \mathcal{E}_\shortparallel+\mathcal{E}^2,
\text{   and }
  \|u\|_3^2+\|p\|_2^2+\|\eta\|_{7/2}^2\lesssim \mathcal{D}_\shortparallel+\mathcal{E}\mathcal{D}.
  \]
  Since $\pa_t\eta=u\cdot\mathcal{N}$, by the estimates of $u\cdot\mathcal{N}$, we get
  \[
  \|\pa_t\eta\|_{3/2}^2\lesssim \mathcal{E}_\shortparallel+\mathcal{E}^2,
\text{   and }
  \|\pa_t\eta\|_{5/2}^2\lesssim\mathcal{D}_\shortparallel+\mathcal{E}\mathcal{D}.
  \]
Finally, we take $\pa_t$ on both sides of $\pa_t\eta=u\cdot\mathcal{N}$ and obtain
  \[
  \pa_t^2\eta= \pa_tu\cdot\mathcal{N}+u\cdot\pa_t\mathcal{N}.
  \]
Therefore, we use the horizontal dissipation to get
\[
\begin{aligned}
\left\|\pa_t^2\eta \right\|_{1/2}^2&\lesssim \|\pa_tu\|_{1/2,\Sigma}^2 \left(1+\|\eta\|_3^2 \right)+
\left\|\pa_t\nabla_\ast\eta \right\|_{1/2}^2
\|u\|_{5/2,\Sigma}^2
\lesssim\mathcal{D}_\shortparallel+\mathcal{E}\mathcal{D}.
\end{aligned}
\]
\end{proof}

\section{Global existence and decay}\label{se5}
In this section, we establish the \textit{a priori estimates} and obtain the well-posedness.
Our strategy to prove the global well-posedness is the usual continuity argument. We will first need to prove the uniform boundedness for any finite time interval $[0,T]$, which is concluded in our a priori estimates.
\subsection{A priori estimates}
Our a priori estimates is one ingredient required to develop the global well-posedness. It contains the decay of energy and the finite integral of dissipation.
\begin{theorem}\label{thm:priori}
  Suppose $(u,p,\eta)$ satisfies \eqref{eq:new_NS}. Then for any finite interval $[0,T]$, there exists a universal constant $\delta\in(0,1)$ such that, if
  \[
  \sup_{0\le t\le T}\mathcal{E}(t)\le\delta,
  \]
there exists a universal constant $\sigma>0$ so that
\[
  \sup_{0\le t\le T}e^{\sigma t}\mathcal{E}(t)+\int_0^T\mathcal{D}(t)\,\mathrm{d}t\lesssim\mathcal{E}(0).
  \]
Here, $\delta$ and $\sigma$ are independent of $T$.
\end{theorem}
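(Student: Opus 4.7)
The plan is to combine the horizontal energy--dissipation identity in Theorem~\ref{thm:horizontal} with the elliptic enhancement in Theorem~\ref{thm:enhanced_dissipation} to produce a closed differential inequality for $\mathcal{E}$, and then exploit the fact that every term of $\mathcal{E}$ is a Sobolev norm weaker than the matching term of $\mathcal{D}$, so that Gr\"onwall yields exponential decay. Theorem~\ref{thm:horizontal} gives
\[
\frac{d}{dt}(\mathcal{E}_\shortparallel - \mathcal{P}) + \mathcal{D}_\shortparallel \lesssim \mathcal{E}^{1/2}\mathcal{D},
\]
while Theorem~\ref{thm:enhanced_dissipation}, together with Korn's inequality on $\Om$ (using $u=0$ on $\Sigma_b$), yields $\mathcal{D} \le C(\mathcal{D}_\shortparallel + \mathcal{E}\mathcal{D})$ and $\mathcal{E} \le C(\mathcal{E}_\shortparallel + \mathcal{E}^2)$. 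For $\mathcal{E}(t) \le \delta$ with $\delta$ small, these perturbative contributions are absorbed, producing
\[
\frac{d}{dt}(\mathcal{E}_\shortparallel - \mathcal{P}) + c_0 \mathcal{D} \le 0
\]
for a universal $c_0 > 0$.

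Second, I would show that the modified energy $\tilde{\mathcal{E}} := \mathcal{E}_\shortparallel - \mathcal{P}$ is comparable to $\mathcal{E}$ in the smallness regime. The explicit form $F^2 = -\dive_{\pa_t\mathcal{A}} u$ combined with Sobolev embeddings gives $|\mathcal{P}| \lesssim \|p\|_{H^1}\|\nabla \pa_t\bar\eta\|_{L^3}\|\nabla u\|_{L^6} \lesssim \mathcal{E}^{3/2}$, so for $\delta$ small $\tilde{\mathcal{E}}$ and $\mathcal{E}_\shortparallel$ are comparable. Combined with $\mathcal{E} \le C(\mathcal{E}_\shortparallel + \mathcal{E}^2)$ this gives $\tilde{\mathcal{E}} \sim \mathcal{E}$ throughout the range $\mathcal{E} \le \delta$.

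Third, the crucial observation is that $\mathcal{E} \le C_1 \mathcal{D}$: indeed, each summand of $\mathcal{E}$ is a Sobolev norm of lower order than the matching summand in $\mathcal{D}$ (for example $\|u\|_{H^2}^2 \le \|u\|_{H^3}^2$, $\|\pa_t u\|_{H^0}^2 \le \|\pa_t u\|_{H^1}^2$, $\|\eta\|_{H^3}^2 \le \|\eta\|_{H^{7/2}}^2$, and similarly for the remaining terms). Substituting $\mathcal{E} \le C_1 \mathcal{D}$ into the bound of Step~1 and using $\tilde{\mathcal{E}} \sim \mathcal{E}$ gives
\[
\frac{d}{dt}\tilde{\mathcal{E}} + 2\sigma \tilde{\mathcal{E}} \le 0
\]
for some $\sigma = \sigma(L_1, L_2) > 0$. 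Gr\"onwall then yields $\tilde{\mathcal{E}}(t) \le e^{-2\sigma t}\tilde{\mathcal{E}}(0)$, hence $e^{\sigma t}\mathcal{E}(t) \lesssim \mathcal{E}(0)$ uniformly on $[0, T]$. Integrating the inequality $\frac{d}{dt}\tilde{\mathcal{E}} + c_0 \mathcal{D} \le 0$ from $0$ to $T$ provides $\int_0^T \mathcal{D}(s)\,\mathrm{d}s \lesssim \tilde{\mathcal{E}}(0) \lesssim \mathcal{E}(0)$, which is the remaining part of the claim.

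The main obstacle is quantitative uniformity: the constants produced when absorbing $\mathcal{E}^{1/2}\mathcal{D}$ and $\mathcal{E}\mathcal{D}$ depend only on the universal constants from Theorems~\ref{thm:horizontal} and \ref{thm:enhanced_dissipation} and on $L_1, L_2$, so $\delta$ must be chosen once and for all, independent of $T$. This is closed by the standard continuity/bootstrap argument: assuming $\mathcal{E}(t) \le \delta$ on $[0, T]$, the derivation above actually improves the bound to $\mathcal{E}(t) \le C\mathcal{E}(0) \le \delta/2$ whenever $\mathcal{E}(0)$ is sufficiently small, so the bound propagates. A secondary point worth checking carefully is that $\|\pa_t u\|_{H^1}$, which appears in $\mathcal{D}$ but not directly in the conclusion of Theorem~\ref{thm:enhanced_dissipation}, is recovered from $\int_\Om |\mathbb{D}_\mathcal{A} \pa_t u|^2 J$ via Korn's inequality and the Dirichlet condition $\pa_t u = 0$ at $\Sigma_b$.
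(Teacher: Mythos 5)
Your proposal is correct and follows essentially the same route as the paper: absorb the $\mathcal{E}^{1/2}\mathcal{D}$ and $\mathcal{E}\mathcal{D}$ errors using Theorems \ref{thm:horizontal} and \ref{thm:enhanced_dissipation}, show $\mathcal{E}_\shortparallel-\mathcal{P}$ is comparable to $\mathcal{E}$ via the smallness of $\mathcal{P}$, use that $\mathcal{E}\lesssim\mathcal{D}$, and conclude by Gr\"onwall plus time integration. The only cosmetic differences are that the paper bounds $\mathcal{P}\lesssim\mathcal{E}^{1/2}\mathcal{E}_\shortparallel$ rather than $\mathcal{E}^{3/2}$, and compares $\mathcal{D}\gtrsim\mathcal{E}_\shortparallel$ rather than $\mathcal{D}\gtrsim\mathcal{E}$ directly (both are immediate since each summand of the weaker functional is dominated by the matching summand of the stronger one).
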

\begin{proof}
  From the definition of horizontal energy $\mathcal{E}_\shortparallel$ and dissipation $\mathcal{D}_\shortparallel$,
and Theorem \ref{thm:enhanced_dissipation},
we see $\mathcal{E}\lesssim \mathcal{E}_\shortparallel+\mathcal{E}^2$ and $\mathcal{D}\lesssim \mathcal{D}_\shortparallel+\mathcal{E}\mathcal{D}$.
Consequently, after taking $\delta$ sufficiently small,
the terms $\mathcal{E}^2$ and $\mathcal{E}\mathcal{D}$ can be absorb into $\mathcal{E}$ and $\mathcal{D}$,
 respectively. Consequently, we have $\mathcal{E}\lesssim\mathcal{E}_\shortparallel$ and $\mathcal{D}\lesssim\mathcal{D}_\shortparallel$.
   We plunge this structure into the horizontal energy-dissipation inequality in Theorem \ref{thm:horizontal} and obtain
 \[
  \f{d}{dt}\left(\mathcal{E}_\shortparallel-\mathcal{P}\right)+\mathcal{D}_\shortparallel\lesssim\mathcal{E}^{1/2}\mathcal{D}_\shortparallel.
  \]
After restricting  $\delta$ small enough, we get
\beq\label{est:e_d1}
  \f{d}{dt}\left(\mathcal{E}_\shortparallel-\mathcal{P}\right)+\f12\mathcal{D}_\shortparallel\le0.
  \eeq
  So we need to ensure that the terms differentiated is positive. By the H\"older inequality, Sobolev inequalities and usual trace theory, we have
\[
\mathcal{P}=\int_\Om p\dive_{\pa_t\mathcal{A}}uJ\lesssim  \|p\|_1\|u\|_2 \left\|\pa_t\eta \right\|_{3/2}\lesssim\mathcal{E}^{1/2}\mathcal{E}_\shortparallel.
 \]
As a result, $\mathcal{P}\lesssim \delta^{1/2}\mathcal{E}_\shortparallel$, and hence
\[
\mathcal{E}_\shortparallel-\mathcal{P}\gtrsim \left(1-\delta^{1/2} \right)\mathcal{E}_\shortparallel,
\]
provided that we might restrict $\delta$ necessarily again. Also, it is trivial to see that
$\mathcal{E}_\shortparallel-\mathcal{P}\lesssim \mathcal{E}_\shortparallel$. Therefore, $\mathcal{E}_\shortparallel-\mathcal{P}$ is equivalent to $\mathcal{E}_\shortparallel$.

From the definition of $\mathcal{E}$ and $\mathcal{D}$ in \eqref{def:energy} and \eqref{def:dissipation}, it is obvious that
\[
\mathcal{D}\gtrsim\mathcal{E}_\shortparallel\gtrsim \mathcal{E}_\shortparallel-\mathcal{P}>0.
\]
Consequently, we can choose a universal constant $\sigma>0$ such that \eqref{est:e_d1} becomes
   \[
   \f{d}{dt}\left(\mathcal{E}_\shortparallel-\mathcal{P}\right)+\sigma\left(\mathcal{E}_\shortparallel-\mathcal{P}\right)\le0.
   \]
Then Gronwall's inequality implies
\[
\mathcal{E}_\shortparallel(t)\lesssim\mathcal{E}_\shortparallel(t)-\mathcal{P}(t)
\lesssim e^{-\sigma t} \left(\mathcal{E}_\shortparallel(0)-\mathcal{P}(0) \right)\lesssim e^{-\sigma t}\mathcal{E}(0).
 \]
 Therefore, we obtain a uniform bound
\[
\sup_{0\le t\le T}e^{\sigma t}\mathcal{E}(t)\lesssim\mathcal{E}(0).
\]
\eqref{est:e_d1} also implies
   \[
   \f{d}{dt}\left(\mathcal{E}_\shortparallel-\mathcal{P}\right)+C\mathcal{D}\le0,
   \]
   for some constant $C>0$. Then integrating over $[0,T]$ implies
   \[
   \int_0^T\mathcal{D}\lesssim\mathcal{E}_\shortparallel(0)\lesssim\mathcal{E}(0).
   \]
\end{proof}
\subsection{Local well-posedness}
Let $u_0\in H^2(\Om)$, $\eta_0\in H^3(\Sigma)$ and also satisfy the compatibility conditions,
we need to construct $\pa_tu(0)$, $p(0)$ and $\pa_t\eta(0)$ appearing in $\mathcal{E}(0)$.
We set $\pa_t\eta(0)=u_0\cdot\mathcal{N}_0$.
From the Sobolev embedding theory, we see that $\pa_t\eta(0)\in H^{1/2}(\Sigma)$.
Then we set $p(0)\in H^1(\Om)$ is the weak solution of
\beq
\left\{
\begin{aligned}\label{eq5.2v10}
  &-\dive_{\mathcal{A}_0} \left(\nabla_{\mathcal{A}_0} p(0) - \pa_t\bar{\eta}(0)K(0)W\pa_3u_0 \right) =\dive_{\mathcal{A}_0} (R(0)u_0)\in H^0(\Om),\\
  &p(0)=\left[ \left(\eta_0-\mathcal{H}(0) \right)\mathcal{N}_0 +\mathbb{D}_{\mathcal{A}_0}u_0\mathcal{N}_0\right]\cdot
\frac{ \mathcal{N}_0}{ \left|\mathcal{N}_0 \right|}\in H^{1/2}(\Sigma),\\
  &\left(\nabla_{\mathcal{A}_0}p(0)- \pa_t\bar{\eta}(0)K(0)W\pa_3u_0 \right)\cdot\nu
=\Delta_{\mathcal{A}_0}u_0\cdot\nu\in H^{1/2}(\Sigma_b),
\end{aligned}
\right.
\eeq
where $R=\pa_tMM^{-1}$ with the matrix $M=J\nabla\Phi$ that guarantees that $\pa_tu-Ru$ is $\dive_{\mathcal{A}}$ free. We also set $\pa_tu(0)=\Delta_{\mathcal{A}_0}u_0-\nabla_{\mathcal{A}_0}p(0)+\pa_t\bar{\eta}(0)K(0)W\pa_3u_0\in H^0(\Om)$. The elliptic equations \eqref{eq5.2v10} are well solved by the theory of Poisson equations.

Then the local well-posedness is stated as follows.
\begin{theorem}\label{thm:local}
  Assume that $\eta_0+1>\delta_0>0$ for some $\delta_0>0$. Suppose $(u_0,\eta_0)$ satisfies $\|u_0\|_{H^2(\Om)}^2+ \|\eta_0\|_{H^3(\Sigma)}^2<\varepsilon$ for some small $\varepsilon>0$ and the compatibility condition
  \eqref{cond:compatibility}.
  Then there exists a positive constant $\bar{T}<1$ such that for $0<T<\bar{T}$,
the system \eqref{eq:new_ns} admits a unique strong solution $(u,p,\eta)$ achieving the initial data and
  \beq
\sup_{0\le t\le T}\mathcal{E}(t)+\int_0^T\mathcal{D}(t)\,\mathrm{d}t+ \left\|\pa_t^2u \right\|_{(\mathcal{X}_T)^\ast}^2\le \varepsilon,
  \eeq
  where the space is defined to be
  \[
 \mathcal{X}_T=L^2([0,T];\mathcal{X}(t)), \text{ with } \mathcal{X}(t):= \left\{u\in \mathcal{H}^1(t): \dive_{\mathcal{A}(t)}u=0 \right\}.
  \]
Furthermore, the mapping $\Phi$ is a $C^1$ diffeomorphism.
\end{theorem}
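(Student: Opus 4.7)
The plan is to construct the solution by a Picard iteration built around the linear theory of Sections 2--4, with the small parameters being both the size of the initial data and the length of the time interval. The scheme is designed so that each step requires only the linear horizontal energy--dissipation identity from Lemma 2.1 and the coupled elliptic estimate Theorem 3.2, which together are exactly what produced the a priori bound in Theorem 5.1. The initial data $\pa_t\eta(0)$, $p(0)$, $\pa_tu(0)$ are already assembled in the paragraph preceding the statement, and the smallness of $\mathcal{E}(0)$ follows from $\|u_0\|_{H^2}^2+\|\eta_0\|_{H^3}^2<\varepsilon$ together with the elliptic bound for $p(0)$ coming from \eqref{eq5.2v10}.

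\textbf{Linearized problem and iteration.} Given $\bigl(\tw,\tg\bigr)$ with $\sup_{[0,T]}\bigl(\|\tw\|_{H^2}^2+\|\tg\|_{H^3}^2+\cdots\bigr)\le\varepsilon_1$ and $\tg(\cdot,0)=\eta_0$, build the coefficients $\tA=\mathcal{A}(\tg)$, $\widetilde{\mathcal{N}}=\mathcal{N}(\tg)$, $\tJ$, $\tK$, $\tH=\mathcal{H}(\tg)$ and solve, for the unknowns $(u,p,\eta)$, the linear problem
\beno
  &&\pa_tu-\pa_t\bar{\tg}\,W\tK\pa_3u+\tw\cdot\nabla_{\tA}u+\nabla_{\tA}p-\Delta_{\tA}u=0,\\
  &&\dive_{\tA}u=0,\qquad u|_{\Sigma_b}=0,\\
  &&S_{\tA}(p,u)\widetilde{\mathcal{N}}=\eta\widetilde{\mathcal{N}}-\Delta_\ast\eta\,\widetilde{\mathcal{N}}+\bigl(\eta\widetilde{\mathcal{N}}-\tH\widetilde{\mathcal{N}}-\eta\widetilde{\mathcal{N}}+\Delta_\ast\eta\,\widetilde{\mathcal{N}}\bigr),\\
  &&\pa_t\eta=u\cdot\widetilde{\mathcal{N}},\qquad (u,\eta)|_{t=0}=(u_0,\eta_0).
\eeno
Here the parenthesised remainder on $\Sigma$ is quadratic in $\tg$ and, together with the $(\tw,\tg)$-dependent convective and geometric terms above, plays the role of the forcing $F^i$ in the weak formulation \eqref{eq:new_NS}. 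Existence and uniqueness of $(u,p,\eta)$ for this linear system follows from a Galerkin construction in the pressureless divergence-free space $\mathcal{X}(t)$ used to define $\mathcal{X}_T$, combined with Theorem \ref{thm:elliptic} to recover the pressure and the $\eta$--regularity. Applying Theorem \ref{thm:horizontal} and Theorem \ref{thm:enhanced_dissipation} verbatim (with the extra forcing kept on the right-hand side and absorbed using smallness of $\tg-\eta_0$, as allowed by the remark following Theorem \ref{thm:elliptic}), one obtains on a short interval $[0,T]$ with $T\le\bar{T}$:
\beno
\sup_{0\le t\le T}\mathcal{E}(t)+\int_0^T\mathcal{D}(t)\,dt\le C\bigl(\mathcal{E}(0)+T^{1/2}\varepsilon_1\bigr).
\eeno
Choosing first $\varepsilon$ small and then $\bar{T}$ small shows that the map $\Psi:(\tw,\tg)\mapsto(u,\eta)$ preserves the ball of radius $\varepsilon_1\asymp\mathcal{E}(0)^{1/2}$ in this norm.

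\textbf{Contraction and passage to the limit.} Given two iterates with inputs $(\tw_1,\tg_1)$ and $(\tw_2,\tg_2)$, the differences $(\delta u,\delta p,\delta\eta)$ satisfy a linear system of the same shape in which the right-hand side is linear in $(\tw_1-\tw_2,\tg_1-\tg_2)$ with coefficients controlled by $\varepsilon_1$. A repetition of the energy and elliptic estimates one derivative lower --- at the level of $\|\delta u\|_{H^1}^2+\|\delta p\|_{L^2}^2+\|\delta\eta\|_{H^2}^2$ --- produces
\beno
\sup_{[0,T]}\bigl(\|\delta u\|_{1}^2+\|\delta\eta\|_{2}^2\bigr)\le C\bigl(\varepsilon_1+T^{1/2}\bigr)\sup_{[0,T]}\bigl(\|\tw_1-\tw_2\|_{1}^2+\|\tg_1-\tg_2\|_{2}^2\bigr),
\eeno
so $\Psi$ is a contraction on a complete ball in this weaker norm. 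Its fixed point $(u,p,\eta)$ solves \eqref{eq:new_ns}; lower semicontinuity of the strong norm under the weak limit, together with the uniform bound from Step 2, gives $\sup_{[0,T]}\mathcal{E}(t)+\int_0^T\mathcal{D}\le\varepsilon$. The bound $\|\pa_t^2u\|_{(\mathcal{X}_T)^\ast}^2\le\varepsilon$ is obtained by testing the time-differentiated momentum equation against $v\in\mathcal{X}(t)$, integrating by parts in $\Om$ to move one spatial derivative off $\pa_t^2u$, and using the already-controlled norms of $(u,p,\eta,\pa_tu,\pa_t\eta,\pa_t^2\eta)$. Uniqueness is immediate from the same difference estimate applied to two solutions. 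Finally, the smallness $\|\eta\|_{H^3(\Sigma)}\le\varepsilon$ and the harmonic-extension estimate $\|\bar\eta\|_{H^{7/2}(\Om)}\lesssim\|\eta\|_{H^3(\Sigma)}$ combined with the embedding $H^{7/2}(\Om)\hookrightarrow C^1(\bar\Om)$ imply $\|\nabla\Phi-I\|_{L^\infty}\ll 1$ and $J\ge \delta_0/2>0$, so $\Phi(\cdot,t)$ is a $C^1$ diffeomorphism.

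\textbf{Main obstacle.} The delicate point is closing the uniform bound in Step 2 at the low regularity $H^2\times H^3$. In particular, the surface forcing from the difference $\tH-\Delta_\ast\eta$ contains cubic terms in $\nabla_\ast\eta$ together with three horizontal derivatives of $\eta$, which are only barely tame in $H^{1/2}(\Sigma)$; the estimate forces one to use Theorem \ref{thm:elliptic} in its coupled form to get $\|\eta\|_{H^{7/2}}$ controlled simultaneously with $\|u\|_{H^3}$ and $\|p\|_{H^2}$ --- exactly the point emphasised in the paper's strategy discussion. Preserving the compatibility conditions \eqref{cond:compatibility} along the iteration is automatic since each linear step respects $\dive_{\tA}u=0$ and the tangential part of $S_{\tA}(p,u)\widetilde{\mathcal{N}}$ at $t=0$, but this must be checked when constructing the Galerkin basis.
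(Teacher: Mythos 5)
Your proposal follows essentially the same route as the paper's sketch: a linear iteration scheme with lagged geometric coefficients and nonlinear remainders (exactly as in \eqref{eq:iteration}), solved via a Galerkin-type argument in the divergence-free space $\mathcal{X}(t)$, closed by invoking Theorems~\ref{thm:horizontal}, \ref{thm:enhanced_dissipation} and the coupled elliptic estimate Theorem~\ref{thm:elliptic}, with contraction at one derivative lower and the $C^1$ diffeomorphism following from $\|\eta\|_{H^3}\ll1$. Two small discrepancies are worth flagging: in your linearized boundary condition the parenthesised term is written with $\Delta_\ast\eta$ rather than $\Delta_\ast\tg$, which as literally written cancels the $-\Delta_\ast\eta\,\widetilde{\mathcal{N}}$ and destroys the surface-tension dissipation; your subsequent description (``quadratic in $\tg$'') and the analogy with the paper's $F^3(\eta^{m-1})$ make clear the intention is $\bigl(\Delta_\ast\tg-\tH\bigr)\widetilde{\mathcal{N}}$, but the symbol needs correcting. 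Also, you keep the material-derivative correction $-\pa_t\bar{\tg}\,W\tK\pa_3 u+\tw\cdot\nabla_{\tA}u$ acting on the \emph{current} iterate $u$, whereas the paper's scheme \eqref{eq:iteration} lags these entirely into $F^1(u^{m-1},\eta^{m-1})$; your choice preserves the transport structure so Lemma~\ref{lem:energy} applies verbatim without re-deriving the $\tfrac{d}{dt}\int|v|^2J$ identity, which is a mild simplification over the paper's exposition but does not change the argument in substance.
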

The above theorem can be proved by a standard method based on the idea in \cite{ZhI17} coupled with the elliptic theory in Theorem \ref{thm:elliptic}. So we ignore to iron out the details here and only give a sketch.
\begin{proof}
We construct $ \left(u^0, \eta^0 \right)$ by \cite[Theorem A.5]{GT1} as a start point and then construct the approximate solutions $ \left(u^m, p^m, \eta^m \right)$ for $m\ge1$, by the following linear iterated system
  \beq\label{eq:iteration}
\left\{
\begin{aligned}
&\pa_tu^m+\nabla_{\mathcal{A}^{m-1}} p^m -\Delta_{\mathcal{A}^{m-1}}u^m =F^1 \left(u^{m-1},\eta^{m-1} \right)\ &\text{in}&\ \Om,\\
  &\dive_{\mathcal{A}^{m-1}}u^m=0\ &\text{in}&\ \Om,\\
  &S_{\mathcal{A}^{m-1}} \left(p^m,u^m \right)\mathcal{N}^{m-1} = \left(\eta^m-\Delta_\ast\eta^m \right)\mathcal{N}^{m-1}+F^3 \left(\eta^{m-1} \right)\ &\text{on}&\ \Sigma,\\
  &\pa_t\eta^m=u^m\cdot\mathcal{N}^{m-1} \ &\text{on}&\ \Sigma,\\
  &u^m=0\ &\text{on}&\ \Sigma_b,
\end{aligned}
\right.
\eeq
where
\begin{align*}
 & F^1 \left(u^{m-1},\eta^{m-1} \right)   =\pa_t\bar{\eta}^{m-1}WK^{m-1} \pa_3u^{m-1}-u^{m-1}\cdot\nabla_{\mathcal{A}^{m-1}}u^{m-1},\\
& F^3(\eta^{m-1})   =-\left(\nabla_\ast\cdot\left(\f{\nabla_\ast\eta^{m-1}}{\sqrt{1+ \left|\nabla_\ast\eta^{m-1} \right|^2}}
-\nabla_\ast\eta^{m-1}\right)\right)\mathcal{N}^{m-1},
\end{align*}
and $\mathcal{A}^{m-1}$, $\mathcal{N}^{m-1}$, $K^{m-1}$, $\mathcal{H}^{m-1}$ are determined in terms of $\eta^{m-1}$, with the initial data $\left(u^m(0), \eta^m(0) \right)= \left(u_0,\eta_0 \right)$.
Then we can apply the standard method to prove the limit of $\left(u^m, p^m, \eta^m \right)$ is the solution of \eqref{eq:new_ns}.

Hence the solvable of \eqref{eq:iteration} is reduced to solving the linear problem
\beq
\left\{
\begin{aligned}
  &\pa_tv+\nabla_{\mathcal{A}}q-\Delta_{\mathcal{A}}v=F^1\ &\text{in}&\ \Om,\\
  &\dive_{\mathcal{A}}v=0\ &\text{in}&\ \Om,\\
  &S_{\mathcal{A}}(q,v)\mathcal{N}= \left(\xi -\Delta_\ast\xi \right)\mathcal{N}+F^3\ &\text{on}&\ \Sigma,\\
  &\pa_t\xi=v\cdot\mathcal{N}\ &\text{on}&\ \Sigma,\\
&v=0\ &\text{on}&\ \Sigma_b,
\end{aligned}
\right.
\eeq
where $\mathcal{A}$, $\mathcal{N}$ and $J$ determined in terms of $\eta$ are given and $F^i$, $i=1, 3, 5$ are given functions. Then as the procedure in \cite[Theorem 4.8]{ZhI17}, we apply the Galerkin method with finite time-dependent basis to construct the sequence of approximate solutions $\left\{v^n \right\}\in\mathcal{X}_m$ and $\xi^n=\eta_0+\int_0^tv^n(s)\cdot\mathcal{N}(s)\,\mathrm{d}s$ satisfying
  \beq\label{eq:app_1}
\begin{aligned}
\left(\pa_tv^n,w \right)_{\mathcal{H}^0}+ \f12 \left(v^n, w \right)_{\mathcal{H}^1}+ \left(\xi^n,w\cdot\mathcal{N} \right)_{H^1(\Sigma)}
= \left(F^1,w \right)_{\mathcal{H}^0}
 - \left(F^3-\Pi_0 \left(F^3(0)+\mathbb{D}_{\mathcal{A}_0} \left(\mathcal{P}^m_0u_0 \right)\mathcal{N}_0 \right),w \right)_{0,\Sigma}
\end{aligned}
\eeq
with $w\in \mathcal{X}_m$, where $\mathcal{X}_m=\text{span} \left\{w^j \right\}_{j=1}^m$,
and $ \left\{w^j \right\}_{j=1}^\infty$ is a countable basis of $H^2(\Om)\cap\mathcal{X}(t)$.
$\mathcal{P}^m_0$ is a projection from $H^2(\Om)$ to $\mathcal{X}_m(0)$.
Then \eqref{eq:app_1} is reduced to an integral-differential equation 
\[
    \dot{d}^n(t)+A(t)d^n(t)+\int_0^t C(t,s)d^n(s)\,\mathrm{d}s=\mathfrak{F}(t),
    \]
where $A(t)$ and $C(t,s)$ are $n\times n$ matrix-valued $C^1$ functions, which can be solved by the theory of integral-differential equations(see for instance \cite{GM70}).

Then we use the standard argument to bound the $v^n$ and $\xi^n$ in the topology  of $\mathcal{E}$ and $\mathcal{D}$, similar to the argument in the proof of \cite[Theorem 4.8]{GZ}, to close the proof.
\end{proof}
\begin{remark}
Although we only need the local theory for small initial data in our global result,
we note that the local well-posedness can be established for the large data of $(u_0,\eta_0)$.
The techniques used for large data is just a combination of $\varepsilon$-modification for the mapping $\Phi$,
with the modified elliptic estimates in Theorem \ref{thm:elliptic} depending on $\eta_0$, that is similar as in \cite{Wu}.
\end{remark}
\subsection{Global existence and decay}
\begin{proof}[Proof of Theorem \ref{thm:main}.]
The result is a consequence of continuity argument. Let $T^\ast$ be the supremum of $T$ such that Theorem \ref{thm:priori} holds. Suppose 
\[
\sup_{0\le t\le T}e^{\sigma t}\mathcal{E}(t)  +\int_0^T\mathcal{D}(t)\,\mathrm{d}t\le2\varepsilon
\]
for $\varepsilon$ sufficiently small and any $T<T^\ast$. Then the a priori estimates in Theorem \ref{thm:priori} implies
  \[
  \sup_{0\le t\le T}e^{\sigma t}\mathcal{E}(t)+\int_0^T\mathcal{D}(t)\,\mathrm{d}t\le C_1\mathcal{E}(0)
  \]
for a universal constant $C_1$.
The Local well-posedness of Theorem \ref{thm:local} implies $\mathcal{E}(0)\le C_2 \left(\|u_0\|_2^2+ \|\eta_0\|_3^2 \right)$ for a universal constant
$C_2$.
Then we can restrict the initial data $(u_0,\eta_0)$,
such that $\delta$ small enough, to ensure that
$C_1C_2 \left(\|u_0\|_2^2+\|\eta_0\|_3^2 \right)<\varepsilon$,
contradicting to the definition of $T^\ast$. Hence the continuity argument yields $T^\ast=\infty$.
\end{proof}

\textbf{Acknowledgment.}
Xing Cheng has been supported by the ``the Fundamental Research Funds for the Central Universities" (No.B210202147), and Yunrui Zheng is partially supported by NSF of China under Grant 11901350, and The Fundamental Research Funds of Shandong University under Grant 11140079614046.

\end{document}